\definecolor{darkblue}{rgb}{0,0,0.6}
\newtheorem*{rep@theorem}{\rep@title}
\newcommand{\newreptheorem}[2]{%
	\newenvironment{rep#1}[1]{%
		\def\rep@title{#2 \ref{##1}}%
		\begin{rep@theorem}}%
		{\end{rep@theorem}}}
\newtheorem{proposition}{Proposition}[section]
\newtheorem{theorem}[proposition]{Theorem}
\newtheorem*{theorem*}{Theorem}
\newtheorem{corollary}[proposition]{Corollary}
\newtheorem{lemma}[proposition]{Lemma}
\newtheorem{thmx}{Theorem}
\crefname{thmx}{Theorem}{Theorems}
\theoremstyle{definition}
\newtheorem{definition}[proposition]{Definition}
\theoremstyle{remark}
\newtheorem{remark}[proposition]{Remark}
\newtheorem{hypothesis}[proposition]{Hypothesis}
\newtheorem{assumption}[proposition]{Assumption}
\newtheorem*{claim}{Claim}
\newtheorem*{remark*}{Remark}
\crefname{assumption}{Assumption}{Assumptions}
\crefname{hypothesis}{Hypothesis}{Hypotheses}
\crefname{theorem}{Theorem}{Theorems}
\crefname{proposition}{Proposition}{Propositions}
\crefname{corollary}{Corollary}{Corollaries}
\crefname{definition}{Definition}{Definitions}
\crefname{lemma}{Lemma}{Lemmas}
\crefname{question}{Question}{Questions}
\crefname{example}{Example}{Examples}
\crefname{conjecture}{Conjecture}{Conjectures}
\crefname{remark}{Remark}{Remarks}
\crefname{const}{Construction}{Constructions}
\numberwithin{equation}{section}
\newcommand{\defi}{\mathrm{def}}
\newcommand{\N}{\mathbb{N}}
\newcommand{\R}{\mathbb{R}}
\newcommand{\Z}{\mathbb{Z}}
\newcommand{\im}{\operatorname{Im}}
\DeclareMathOperator{\Tor}{Tor}
\DeclareMathOperator{\Top}{Top}
\DeclareMathOperator{\BO}{BO}
\DeclareMathOperator{\BSO}{BSO}
\DeclareMathOperator{\OO}{O}
\DeclareMathOperator{\SO}{SO}
\DeclareMathOperator{\BTop}{BTop}
\DeclareMathOperator{\Spin}{Spin}
\DeclareMathOperator{\BSpin}{BSpin}
\DeclareMathOperator{\spin}{Spin}
\DeclareMathOperator{\Pin}{Pin}
\DeclareMathOperator{\BPin}{BPin}
\DeclareMathOperator{\TopSpin}{TopSpin}
\newcommand{\ol}{\overline}
\newcommand{\wt}{\widetilde}
\newcommand{\wh}{\widehat}
\newcommand{\CP}{\mathbb{C}P}
\newcommand{\RP}{\mathbb{R}P}
\DeclareMathOperator{\Sq}{Sq}
\DeclareMathOperator{\Hom}{Hom}
\DeclareMathOperator{\ev}{ev}
\title{Stably exotic $4$-manifolds}
\author{Daniel Kasprowski}
\address{School of Mathematical Sciences\\ University of Southampton\\ United Kingdom}
\email{d.kasprowski@soton.ac.uk}
\author{Mark Powell}
\address{School of Mathematics and Statistics, University of Glasgow, United Kingdom}
\email{mark.powell@glasgow.ac.uk}
\begin{document}
	\begin{abstract}
		A pair of closed, smooth $4$-manifolds $M$ and $M'$ are \emph{stably exotic} if they are stably homeomorphic but not stably diffeomorphic, where stabilisation refers to connected sum with copies of $S^2 \times S^2$.
		Orientable stable exotica do not exist by a result of Gompf, but Kreck showed that nonorientable examples are plentiful.
		We investigate which values of the fundamental group $\pi$ and the first and second Stiefel-Whitney classes $w_1$ and $w_2$ admit stably exotic pairs, giving a complete description if~$H_5(\pi;\mathbb{Z})=0$. In particular we produce new stable exotica, and new settings in which they do not arise.
	\end{abstract}

	\def\subjclassname{\textup{2020} Mathematics Subject Classification}
	\expandafter\let\csname subjclassname@1991\endcsname=\subjclassname
	\subjclass{
		57K40. 
	}

	\maketitle

	\section{Introduction}
	Two closed, smooth $4$-manifolds $M$ and $M'$ are \emph{stably diffeomorphic} or \emph{stably homeomorphic} if there exists $k\in\N$ such that $M\#k(S^2\times S^2)$ and $M'\#k(S^2\times S^2)$ are diffeomorphic or homeomorphic, respectively. Gompf proved that orientable $4$-manifolds are stably diffeomorphic if and only if they are stably homeomorphic.
	In fact, he proved the following stronger result, generalising Wall's theorem~\cite{Wall-on-simply-conn-4mflds}.
	
	\begin{theorem*}[Gompf~\cite{Gompf84}]
		Let $M$ and $M'$ be closed, smooth, stably homeomorphic $4$-manifolds.
		\begin{enumerate}[(i)]
			\item Then  $M\#(S^2\mathrel{\wt\times} S^2)$ and $M'\#(S^2\mathrel{\wt\times} S^2)$ are stably diffeomorphic.
			\item Suppose that $M$ and $M'$ are orientable. Then $M$ and $M'$ are stably diffeomorphic.
		\end{enumerate}
	\end{theorem*}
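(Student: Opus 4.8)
The plan is to run the whole argument through Kreck's stable classification and its topological analogue, reducing the statement to the behaviour of a single forgetful map between bordism groups. Because $\Top/\OO$ is $2$-connected, the homeomorphism invariants $\pi_1$, $w_1$, $w_2$ assemble into one $2$-coconnected fibration which serves simultaneously as the smooth and topological normal $1$-type; write it as $\xi\colon B\to\BSO$, with topological lift $B\to\mathrm{BSTop}$. By Kreck's theorem, stabilisation by $S^2\times S^2$ realises exactly the bordism relation, so two smooth $4$-manifolds are stably diffeomorphic precisely when, after acting by $\Aut(\xi)$, their normal $1$-smoothings agree in $\Omega_4^{\mathrm{sm}}(B)$; the topological version of the same theorem identifies stable homeomorphism with agreement in $\Omega_4^{\mathrm{top}}(B)$. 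Thus the entire theorem is governed by the forgetful map $F\colon\Omega_4^{\mathrm{sm}}(B)\to\Omega_4^{\mathrm{top}}(B)$: the hypothesis gives $[M]-[M']\in\ker F$ (after adjusting by $\Aut(\xi)$), and everything comes down to controlling this kernel.

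To analyse $F$ I would use $\Top/\OO\simeq K(\Z/2,3)$ in the relevant range, so that the homotopy cofibre $C$ of $\BSO\to\mathrm{BSTop}$ is $3$-connected with $\pi_4(C)=\Z/2$ (the Kirby--Siebenmann class). Smashing the cofibre sequence of Thom spectra with $B_+$ and using the Atiyah--Hirzebruch spectral sequence for $C\wedge B_+$ yields a long exact sequence whose crucial segment is
\[
\Omega_5^{\mathrm{top}}(B)\xrightarrow{\ \partial\ }H_1(B;\Z/2)\xrightarrow{\ \alpha\ }\Omega_4^{\mathrm{sm}}(B)\xrightarrow{\ F\ }\Omega_4^{\mathrm{top}}(B)\xrightarrow{\ \ks\ }H_0(B;\Z/2),
\]
where the boundary labelled $\ks$ records the relative Kirby--Siebenmann invariant. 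The indexing is pinned down by the case $B=\ast$, where $\Omega_4^{\SO}=\Z$ includes into $\Omega_4^{\mathrm{top}}=\Z\oplus\Z/2$ with cokernel detected by $\ks$ of the Chern manifold $\ast\CP^2$. Note that the coefficients are genuinely untwisted, since $\Z/2$ has no nontrivial automorphisms; hence the dependence on $w_1$ cannot enter through the coefficient system and must instead be carried by the maps $\partial$ and $\alpha$. From exactness, $\ker F=\im\alpha\cong\coker\partial$, a quotient of $H_1(B;\Z/2)\cong H_1(\pi;\Z/2)$, and the remaining work is to compute $\partial$.

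For part (ii) the task is to show that $\partial\colon\Omega_5^{\mathrm{top}}(B)\to H_1(B;\Z/2)$ is surjective when $w_1=0$, so that $\ker F=\coker\partial=0$ and $F[M]=F[M']$ forces $[M]=[M']$; stabilisation may then be taken by $S^2\times S^2$, which preserves the oriented normal $1$-type. Geometrically one wants each class of $H_1(\pi;\Z/2)$ to be hit by a topological $5$-manifold over $B$ with nontrivial relative Kirby--Siebenmann invariant, e.g. a product of a Kirby--Siebenmann generator with a loop representing the given homology class; this is exactly the point at which orientability is used. For part (i), dropping orientability, $\ker F$ may be nonzero, and I would show that connected sum with a single $S^2\mathrel{\wt\times} S^2$ shifts the normal $1$-smoothing of a manifold by precisely the image of the generator responsible for $\ker F$: since $S^2\mathrel{\wt\times} S^2\cong\CP^2\#\overline{\CP^2}$ is non-spin, it carries a normal $1$-smoothing detecting the relevant $\Z/2$, so summing it onto both $M$ and $M'$ moves their classes into agreement in $\Omega_4^{\mathrm{sm}}$. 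Equivalently, in the language of smoothing theory (Freedman--Quinn, Lashof--Taylor), the obstruction to smoothing a homeomorphism of smooth $4$-manifolds is the single $\Z/2$-class above, and the non-trivial bundle $S^2\mathrel{\wt\times} S^2$, unlike $S^2\times S^2$, is exactly what supplies the freedom to cancel it.

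The main obstacle is the orientation dichotomy: proving that $\coker\partial$ vanishes precisely when $w_1=0$ and identifying the stabilisation that kills it in general. This rests on two points that are not formal. First, the boundary maps of the comparison sequence must be identified with genuine Kirby--Siebenmann data, which uses the smoothing theory of $4$-manifolds rather than mere properties of the spectra; and second, computing the image of $\partial$ requires understanding $\Omega_5^{\mathrm{top}}(B)$ through the Atiyah--Hirzebruch spectral sequence, where classes from $H_5(B)$ and their differentials intervene — this is presumably the source of the hypothesis $H_5(\pi;\Z)=0$ under which a complete answer is available. Once $\partial$ is understood, the remainder is bookkeeping with the long exact sequence together with Kreck's theorem.
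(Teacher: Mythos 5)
First, a point of comparison: the paper does not prove this statement at all --- it is quoted from Gompf, whose original argument is geometric and predates the machinery you use. Your plan to rederive it from Kreck's Theorem~C and its topological analogue (the paper's Theorems 3.3 and 3.4) by analysing the forgetful map $\Omega_4(\xi)\to\Omega_4(\xi^{\Top})$ is a legitimate and known alternative route, and it is exactly the framework the paper deploys in Proposition 3.10, where the kernel of this map is shown to be generated by $[K3]$. So the strategy is sound; the problems are in the execution.

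There are two genuine gaps. The first is your treatment of part (i): connected sum with $S^2\mathrel{\wt\times}S^2$ does \emph{not} ``shift the normal 1-smoothing by the generator responsible for $\ker F$'' within a fixed bordism group. Since $S^2\mathrel{\wt\times}S^2$ is simply connected and non-spin, it admits no normal 1-smoothing over the spin-type $\xi$ at all; what the stabilisation actually does is make the universal cover non-spin, \emph{changing the normal 1-type} to the totally non-spin one $B\pi\times \BSO\to \BO$ (suitably twisted by $w_1$). For that 1-type the comparison is easy: the coefficient map in degree $4$ is the split injection $\Z\to\Z\oplus\Z/2$ given by $(\sign,\ks)$, the coefficients in degrees $1,2,3$ vanish, and naturality of the spectral sequences then forces $F$ to be injective --- this, together with Wall's theorem quoted in Remark 1.1 to interchange the two kinds of stabilisation, is the correct mechanism for (i). The second gap is that you leave the crux of part (ii) --- surjectivity of $\partial$, equivalently $\alpha=0$ --- as an acknowledged ``obstacle'' rather than proving it. It can be closed: in the spin-type case take $S^1_\gamma\times E_8$ (the $E_8$-manifold is topological spin with $\ks=1$, and since $c^*w_2$ pulls back from $S^1$ it vanishes, so a $\xi^{\Top}$-structure exists), which maps under $\partial$ to any prescribed $\gamma\in H_1(B;\Z/2)$; alternatively, argue directly as in Proposition 3.10 that for $w_1=0$ the map of James spectral sequences is an isomorphism in rows $q\le 3$ and the injection $16\Z\hookrightarrow 8\Z$ at $(0,4)$, with naturality forcing $d_5^{\Top}=\iota\circ d_5$ so that injectivity survives to $E^\infty$. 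Two smaller inaccuracies: your identification $\pi_5(C\wedge B_+)\cong H_1(B;\Z/2)$ silently assumes $\pi_5(C)=0$, so an $H_0(B;\pi_5(C))$ summand is unaccounted for (harmless, since it is hit by classes over a point, but unjustified as written); and the closing speculation that $H_5(\pi;\Z)=0$ is needed here conflates Gompf's theorem with the paper's Theorem B --- Gompf's theorem holds with no hypothesis on $\pi$, and your own long exact sequence needs none either.
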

	
	\begin{remark}\label{remark:totally-non-spin}
		If the universal cover $\wt M$ is non-spin, then $M\#(S^2\times S^2)$ is diffeomorphic to $M\#(S^2\mathrel{\wt\times} S^2)$~\cite{Wall-diffeomorphisms}*{Theorem~5.2}. Hence by Gompf's theorem $4$-manifolds $M$ with either $w_1(M)=0$ or $w_2(\wt{M}) \neq 0$ are stably diffeomorphic if and only if they are stably homeomorphic.
	\end{remark}

	We investigate the existence of \emph{stably exotic} $4$-manifolds, i.e.\ $4$-manifolds that are stably homeomorphic but are not stably diffeomorphic.
	By \cref{remark:totally-non-spin}, these are necessarily nonorientable and the universal cover must be spin.
Henceforth we restrict to such $4$-manifolds.
	
	\begin{hypothesis}\label{hyp}
		By a \emph{$4$-manifold} we will always mean a closed, smooth, nonorientable $4$-manifold with spin universal cover.
	\end{hypothesis}
	
	Kreck proved that there is an abundance of examples of stable exotica; see also constructions of Cappell--Shaneson~\cites{CS-new-four-mflds-bams,CS-new-four-mflds-annals} and Akbulut~\cites{Akbulut-fake-4-manifold,Akbulut-on-fake} for fundamental groups $\Z/2$ and $\Z$ respectively. Torres~\cite{Torres-JKTR} and Bais--Torres~\cite{Bais-Torres} constructed further stable exotica more recently.

	\begin{theorem*}[Kreck~\cite{Kreck84}*{Theorem~1}]
		Let $\pi$ be a finitely presented group and let $w\colon \pi \to \Z/2$ be a nontrivial homomorphism. Then there exist stably exotic $4$-manifolds $M$ and $M'$ with fundamental group~$\pi$ and orientation character $w$, of the form $M = D \# K3$ and $M' = D \# 11(S^2 \times S^2)$, for some 4-manifold $D$.
	\end{theorem*}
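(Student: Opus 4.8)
The plan is to realize the pair by a connected-sum construction and to separate the smooth and topological stable classifications by a single $\Z/2$-valued bordism invariant. First I would build a closed, smooth, nonorientable $4$-manifold $D$ with $\pi_1(D)\cong\pi$, orientation character $w_1(D)=w$, and spin universal cover: realize $(\pi,w)$ by some smooth $4$-manifold (e.g.\ from a presentation of $\pi$) and do surgery below the middle dimension to kill $w_2$ of the universal cover while preserving $\pi_1$ and $w_1$. Setting $M=D\# K3$ and $M'=D\#11(S^2\times S^2)$, both satisfy \cref{hyp} since $K3$ and $S^2\times S^2$ are simply connected and spin; moreover they share the same $\pi_1$, $w_1$, and $w_2$, hence the same normal $1$-type $\xi\colon B\to\BSO$ (resp.\ $B\to\BTop$ topologically). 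By Kreck's stable classification, two $4$-manifolds with a common normal $1$-type are stably diffeomorphic (resp.\ stably homeomorphic) if and only if they represent the same class in the associated twisted spin bordism group $\Omega_4^{\Spin}(B\pi;w)$ (resp.\ $\Omega_4^{\TopSpin}(B\pi;w)$), modulo the action of the self-equivalences of $(B,\xi)$. Since $S^2\times S^2$ bounds and the simply connected summands map to the basepoint, the difference class is the image of $[K3]\in\Omega_4^{\Spin}(\mathrm{pt})\cong\Z$ under the basepoint inclusion, so everything reduces to computing this image smoothly and topologically.

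For the stably homeomorphic direction I would argue directly. By Freedman's classification $K3$ is homeomorphic to $Q\# Q\# 3(S^2\times S^2)$, where $Q$ is the closed, simply connected topological $4$-manifold with intersection form $-E_8$ and $\ks(Q)=1$. In $D\# K3$ one slides a single $Q$-summand once around an orientation-reversing loop of $D$ (which exists because $w\neq0$); this replaces that summand by its orientation reversal $\ol Q$, of form $+E_8$. Now $Q\#\ol Q$ is simply connected with indefinite even form $E_8\oplus(-E_8)$ and $\ks(Q)+\ks(\ol Q)=0$, so by Freedman it is homeomorphic to $8(S^2\times S^2)$. Hence $D\# K3\cong D\# 11(S^2\times S^2)$ as topological manifolds, which is more than enough for stable homeomorphism. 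In bordism terms this says $[K3]=2[Q]$ topologically, while the orientation-reversing loop forces $2[Q]=0$ in $\Omega_4^{\TopSpin}(B\pi;w)$.

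Showing that $M$ and $M'$ are \emph{not} stably diffeomorphic is the heart of the matter and is where the two categories part: there is no closed smooth $4$-manifold of form $E_8$, so the sliding trick has no smooth analogue, and one checks that the signatures of the orientation double covers agree, so crude invariants fail. Instead I would detect $[K3]$ in $\Omega_4^{\Spin}(B\pi;w)$ via the Atiyah--Hirzebruch spectral sequence. Its bottom group is $E_2^{0,4}=H_0(B\pi;\Omega_4^{\Spin})$ with $\pi$ acting through $w$ by orientation reversal $[N]\mapsto[\ol N]=-[N]$; since $\Omega_4^{\Spin}\cong\Z$ this is the sign module $\Z^w$, so $E_2^{0,4}=H_0(B\pi;\Z^w)=\Z/2$, generated by the image of $[K3]$. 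The same computation topologically gives $\Z/2$ as well, but the comparison is induced by $\Omega_4^{\Spin}\cong\Z\xrightarrow{\times2}\Z\cong\Omega_4^{\TopSpin}$ (smooth $\sign/16$ versus topological $\sign/8$, the latter realized by the $E_8$-manifold), and multiplication by $2$ is zero on $\Z/2$. This explains cleanly why $[K3]$ dies topologically while surviving smoothly: $K3$ is twice a generator topologically but only once smoothly, and the nonorientable twist has made that generator $2$-torsion.

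The main obstacle is to show that this bottom $\Z/2$ genuinely survives to $E_\infty$ in the smooth spectral sequence for every admissible $(\pi,w)$; because $(0,4)$ is the lowest filtration, survival immediately gives $[K3]\neq0$ and hence that $M$ and $M'$ are not stably diffeomorphic. The only candidate differentials hitting it emanate from $E^{3,2}=H_3(B\pi;\Z/2)$ (as $\Omega_2^{\Spin}\cong\Z/2$) and from $E^{5,0}=H_5(B\pi;\Z^w)$ (as $\Omega_0^{\Spin}\cong\Z$), the appearance of $H_5$ foreshadowing this paper's standing hypothesis $H_5(\pi;\Z)=0$. I would secure survival either by exhibiting a direct $\Z/2$-valued smooth bordism invariant refining ``$\sign/16\bmod 2$'' (a Rokhlin--Brown-type invariant, equivalently the smoothing obstruction of the topological identification produced above), which by construction vanishes topologically, or — since we need only produce \emph{some} $D$ — by choosing $D$ conveniently so that the relevant differentials manifestly vanish and the class is visibly nonzero. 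Finally one verifies that passing to the quotient by the self-equivalences of $(B,\xi)$ does not affect this $\Z/2$, which completes the argument.
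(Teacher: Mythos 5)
Your setup (modified surgery, reduction to the class of $[K3]$ in the twisted bordism group, $E^2_{0,4}\cong H_0(\pi;\Z^w)\cong\Z/2$), your explanation of why $[K3]$ dies topologically, and your homeomorphism argument (slide an $E_8$-summand of $K3\cong_{\mathrm{TOP}} Q\#Q\#3(S^2\times S^2)$ around an orientation-reversing loop and apply Freedman to $Q\#\ol Q$) all match the paper, the last being verbatim the paper's argument in the proof of \cref{prop:strategy}. But there is a genuine gap exactly at what you yourself call ``the heart of the matter'': you never prove that the bottom $\Z/2$ survives to $E_\infty$; you only name two strategies (a Rokhlin--Brown-type invariant, or choosing $D$ ``conveniently'') and execute neither. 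Worse, your enumeration of the dangerous differentials is wrong: besides $d_3$ from $E_{3,2}\cong H_3(B\pi;\Z/2)$ and $d_5$ from $E_{5,0}$, there is $d_4$ from $E_{4,1}\cong H_4(B\pi;\Omega_1^{\Spin})\cong H_4(B\pi;\Z/2)$, which is generally nonzero --- indeed, analysing exactly this $d_4$ is the content of the paper's \cref{thm:B}, and \cref{prop:ex1.5} exhibits $(\pi,w_1,w_2)$ with $w_1^3=w_1w_2$ where $d_4$ kills $[K3]$ and no stable exotica exist. So ``survival for every admissible $(\pi,w)$'' is not merely unproven: it is false unless you also control $w_2$, which your construction of $D$ does not do. Surgering $\wt D$ to be spin fixes only $w_2(\wt D)=0$; the class $w_2\in H^2(\pi;\Z/2)$ with $c^*w_2=w_2(\nu_D)$ remains unconstrained, and by \cref{thm:d3-diff} already the $d_3$ differential is dual to $w_1^3+w_1w_2$ and can be nonzero for a bad choice.

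The missing idea, which is how the paper (following Kreck) closes the argument, is to \emph{choose} the normal $1$-type with $w_2=w_1^2$. Then $(w_1,w_2)$ factors through $(\Z/2,u_1,u_1^2)$ via $w\colon\pi\to\Z/2$, and naturality of the James spectral sequence reduces survival of $[K3]$ over $B\pi$ to survival over $B\Z/2$, since the map on $E^\infty_{0,4}$ terms sends $[K3]\mapsto[K3]$. Over $\Z/2$ one does not chase differentials at all: the bordism group is identified with $\Omega_4^{\Pin^+}\cong\Z/16$ (Giambalvo; \cref{prop:pin-4-bordism-groups}), while the $E^2$ terms on the total-degree-four line are four copies of $\Z/2$ (and $E^2_{1,3}=0$), so for order reasons every one of them, in particular $E^\infty_{0,4}\cong\Z/2\ni[K3]$, must survive. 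This is precisely the ``convenient choice plus concrete invariant'' you gestured at --- the invariant being the $\Pin^+$ bordism class, with $[K3]=8\in\Z/16$ --- but without the choice $w_2=w_1^2$ and the $\Pin^+$ identification your outline cannot be completed. (Minor additional slips: the normal $1$-type fibres over $\BO$, not $\BSO$, as the manifolds are nonorientable; and one still needs the remark, as in the paper, that the action of self-equivalences of $\xi$ fixes $0\in\Omega_4(\xi)$, so $[D\#11(S^2\times S^2)]=0\neq[D\#K3]$ in every normal $1$-smoothing.)
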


	In the case where the 2-Sylow subgroup is $\Z/2$, the stable homeomorphism and diffeomorphism classifications were also comprehensively studied by Debray~\cite{Debray}.
	The stably exotic $4$-manifolds appearing in all the previously cited works \cite{Kreck84,CS-new-four-mflds-bams,CS-new-four-mflds-annals,Akbulut-fake-4-manifold,Akbulut-on-fake,Torres-JKTR,Debray,Bais-Torres} satisfy $w_2(\nu_M)=w_1(\nu_M)^2$, where $\nu_M$ is the stable normal bundle of $M$.
	
	Our aim in this article is to understand, for a fixed fundamental group and orientation character, which values of the second Stiefel-Whitney class are realised by stably exotic $4$-manifolds.
	To formalise this, let~$M$ be a $4$-manifold as in \cref{hyp}, and let $c\colon M\to B\pi$ be a 2-connected map; in particular $\pi_1(M) \cong \pi$. Then there exist classes $w_1 \neq 0\in H^1(\pi;\Z/2)$ and $w_2\in H^2(\pi;\Z/2)$ with $c^*(w_i)=w_i(\nu_M)$; to see this for $w_2$ consider the exact sequence $H^2(\pi;\Z/2) \to H^2(M;\Z/2) \to H^2(\wt{M};\Z/2)$.
	The triple $(\pi,w_1,w_2)$ is called the \emph{normal 1-type} of~$M$.
Our first main result reads as follows.
	
	\begin{thmx}
		\label{thm:main}
		Let $\pi$ be a finitely presented group and let $w_1 \neq 0 \in H^1(\pi;\Z/2)$ and $w_2\in H^2(\pi;\Z/2)$.
		\begin{enumerate}[(i)]
			\item\label{it:main-1} If there exist  stably exotic $4$-manifolds with normal 1-type $(\pi,w_1,w_2)$, then $w_1^3=w_1w_2\in H^3(\pi;\Z/2)$.
			\item \emph{[Kreck]}\label{it:main-2} If $w_2 = w_1^2 \in H^2(\pi;\Z/2)$, there exist  stably exotic $4$-manifolds with normal 1-type $(\pi,w_1,w_2)$.
			\item\label{it:main-3} If $\pi$ has cohomological dimension at most $3$ and  $w_1^3=w_1w_2\in H^3(\pi;\Z/2)$, then there exist  stably exotic $4$-manifolds with normal 1-type $(\pi,w_1,w_2)$.
		\end{enumerate}
	\end{thmx}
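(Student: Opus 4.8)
The plan is to recast the problem via Kreck's stable classification and then reduce everything to one Atiyah--Hirzebruch spectral sequence (AHSS) comparison. Write $B$ for the normal $1$-type $(\pi,w_1,w_2)$. Kreck's modified surgery identifies the stable diffeomorphism (resp.\ homeomorphism) classes of manifolds with normal $1$-type $B$ with the quotient of the bordism group $\Omega_4^{\Diff}(B)$ (resp.\ $\Omega_4^{\Top}(B)$) by the action of the self-equivalences of $B$ over $\BSO$ (resp.\ $\BTop$). Hence stably exotic pairs with normal $1$-type $(\pi,w_1,w_2)$ exist if and only if the forgetful map $F\colon \Omega_4^{\Diff}(B)\to\Omega_4^{\Top}(B)$ has nonzero kernel surviving this action. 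Since the universal cover is spin and $w_1\neq 0$, both groups are twisted spin bordism groups, computed by AHSS's with $E_2$-page $H_p(\pi;\Omega_q^{\Spin})$ (resp.\ $H_p(\pi;\Omega_q^{\TopSpin})$), where in the rows $q=0,4$ the coefficients are twisted by $w_1$.

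The key observation is that $F$ is controlled entirely by the bottom class in degree $4$. Indeed $\Omega_q^{\Spin}\to\Omega_q^{\TopSpin}$ is an isomorphism for $q\le 3$ (both equal $\Z,\Z/2,\Z/2,0$), and differs only at $q=4$, where $\Omega_4^{\Spin}=\Z\xrightarrow{\times 2}\Z=\Omega_4^{\TopSpin}$ because the topological $E_8$-manifold has signature $8$ while $K3$ has signature $16$. Thus $F$ is an isomorphism on every associated-graded piece $E_\infty^{p,q}$ with $q\le 3$, so $\ker F$ is isomorphic to the surviving bottom term $E_\infty^{0,4}$ of the smooth spectral sequence; the twisted coefficients give $E_2^{0,4}=(\Omega_4^{\Spin})_\pi=\Z/2$, generated by the class of $K3$. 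The whole question then becomes: does the $K3$-class survive the differentials $d_3\colon H_3(\pi;\Z/2)\to E_3^{0,4}$, $d_4\colon H_4(\pi;\Z/2)\to E_4^{0,4}$, and $d_5\colon H_5(\pi;\Z_{w_1})\to E_5^{0,4}$?

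The central computation, and the crux of the argument, is to identify $d_3$ with the linear functional on $H_3(\pi;\Z/2)$ dual to $w_1^3+w_1w_2\in H^3(\pi;\Z/2)$. This is where the Wu relation $v_2=w_2+w_1^2$ enters: the secondary operation giving $d_3$ out of the $\Omega_2^{\Spin}=\Z/2$ (Arf) row is governed by $\Sq^2$ and the total Wu class of $B$, and its evaluation against the fundamental class produces precisely $w_1v_2=w_1^3+w_1w_2$. Granting this, part~(i) is immediate: if a stably exotic pair exists then $\ker F\neq 0$, so the $K3$-class survives, so in particular $d_3=0$, forcing $w_1^3=w_1w_2$. (The self-equivalence action preserves the bordism filtration and acts trivially on $E_\infty^{0,4}$, so it can never move one class to the other; this is what lets us pass freely between "$\ker F\neq 0$" and "stably exotic pairs exist".) Part~(ii) is Kreck's quoted theorem with $w_2=w_1^2$, for which $w_1^3=w_1w_2$ holds trivially.

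For part~(iii) I assume $w_1^3=w_1w_2$ and $\operatorname{cd}(\pi)\le 3$. The dimension hypothesis forces $H_4(\pi;\Z/2)=0$ and $H_5(\pi;\Z_{w_1})=0$, so $d_4$ and $d_5$ vanish for degree reasons, while $d_3=0$ by the identification above together with $w_1^3=w_1w_2$. Hence the $K3$-class survives and $\ker F=\Z/2\neq 0$. It remains to realise this geometrically: first realise the normal $1$-type $(\pi,w_1,w_2)$ by a single closed smooth $4$-manifold $D$ (possible since $\operatorname{cd}(\pi)\le 3$ kills the obstructions to prescribing $w_2$), and then set $M=D\# K3$ and $M'=D\# 11(S^2\times S^2)$. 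These are stably homeomorphic, since $[K3]$ maps to $0$ under $F$ (its image already dies in the coinvariants $(\Omega_4^{\TopSpin})_\pi$ because $[K3]=2\cdot[E_8\text{-manifold}]$), but they are not stably diffeomorphic, since $[M]-[M']=[K3]$ is the nonzero element of $\ker F$. The principal obstacle is the degree-$3$ differential calculation identifying $d_3$ with evaluation against $w_1^3+w_1w_2$; once this secondary-operation computation is in place, necessity in part~(i), the rôle of $H_5(\pi;\Z)$, and the $\operatorname{cd}(\pi)\le 3$ sufficiency in part~(iii) all follow formally.
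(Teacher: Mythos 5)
Your overall skeleton coincides with the paper's: reduce to Kreck's \cref{thm:kreck-thm-C-smooth,thm:kreck-thm-C-top}, observe that the kernel of $\Omega_4(\xi_\pi)\to\Omega_4(\xi_\pi^{\Top})$ is generated by $[K3]\in E^\infty_{0,4}\cong$ a quotient of $\Z/2$ (using $\Omega_q^{\Spin}\cong\Omega_q^{\TopSpin}$ for $q\le 3$ and $16\Z\to 8\Z$ in degree $4$, which dies in the $w_1$-twisted coinvariants), and then track whether $d_3$, $d_4$, $d_5$ kill $[K3]$. But there is a genuine gap at exactly the point you flag as the ``principal obstacle'': the identification of $d_3$ with the functional dual to $w_1^3+w_1w_2$ is asserted, not proved, and your gesture at a proof does not work. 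The $d_3$ differential is a third-page (secondary-type) operation; it is not determined by primary characteristic-class data such as ``$\Sq^2$ and the total Wu class,'' and a priori, on the universal space $X=K(\Z/2,1)\times K(\Z/2,2)$, it could be dual to any of $v_1^3$, $v_1v_2$, $v_1^3+v_1v_2$, or zero. The paper pins it down (\cref{thm:d3-diff}, via \cref{lem:nontrivial-X-diff,lem:d3-X}) only by feeding in two nontrivial bordism computations through naturality along two maps $\RP^\infty\to X$: the pullback with $u_2=0$ gives $\Omega_4(\xi_{\RP^\infty})\cong\Omega_4^{\Pin^-}=0$, which together with the vanishing of $E^2_{2,3}$, $E^3_{4,1}$, $E^2_{5,0}$ forces $d_3$ to be \emph{nonzero} universally; the pullback with $u_2'=(u_1')^2$ gives $\Omega_4^{\Pin^+}\cong\Z/16$ (Kreck's computation), which forces $d_3$ to vanish there and thereby excludes the candidates $v_1^3$ and $v_1v_2$, each of which pulls back nontrivially to $\RP^\infty$. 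Your evaluation ``$w_1v_2=w_1^3+w_1w_2$'' happens to land on the correct class, but nothing in the proposal derives it, and since both (i) and (iii) rest entirely on this identification, the proposal as written proves neither.

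There is a second, fixable, flaw in your part (iii). With $D$ an \emph{arbitrary} manifold realising the normal $1$-type, the argument that the $\hAut(\xi)$-action ``preserves the filtration and acts trivially on $E_\infty^{0,4}$, so it can never move one class to the other'' is insufficient: triviality on the associated graded does not rule out a unipotent action with $\phi_*[D]=[D]+[K3]$, in which case $[D\#K3]=[D]+[K3]$ and $[D\#11(S^2\times S^2)]=[D]$ would lie in the \emph{same} orbit and \cref{thm:kreck-thm-C-smooth} would yield a stable diffeomorphism. The paper sidesteps this in \cref{prop:strategy}~\eqref{item:iii} by constructing a \emph{null-bordant} $D$ (the boundary of a thickened $2$-complex with the prescribed $w_1,w_2$ --- no hypothesis on $\operatorname{cd}(\pi)$ is needed for this, contrary to your remark), so that one of the two classes is $0\in\Omega_4(\xi_\pi)$, which is genuinely fixed by the action; then $[D\#K3]=[K3]\neq 0$ cannot be moved to $0$. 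Relatedly, the paper obtains the homeomorphism $D\#K3\cong D\#11(S^2\times S^2)$ directly from Freedman together with nonorientability of $D$ (replacing $E_8\#E_8$ by $E_8\#\overline{E}_8$), rather than through the topological bordism group; and your claim that the forgetful map is an isomorphism on every graded piece with $q\le 3$ needs (at least) the observation that $\Omega_3^{\Spin}=0$ insulates those positions from the degree-$4$ row, though the paper argues this point with comparable brevity.
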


The example in the following remark shows that $w_1^3 = w_1w_2$ does not imply $w_2=w_1^2$ in general, even for cohomological dimension 3 groups, and hence
\cref{thm:main}~\eqref{it:main-3} produces new stable exotica that did not appear in any of the previous work cited above, and \eqref{it:main-2} does not imply~\eqref{it:main-3}.


\begin{remark}\label{remark-showing-ii-does-not-imply-iii}
Note that $w_2=w_1^2$ implies $w_1^3 = w_1w_2$.  The converse does not hold in general. 
	For a (cohomological dimension two) example where  $w_1^3 = w_1w_2$ but  $w_2\neq w_1^2$, let $\pi \cong \Z^2$.  Then $H^*(B\Z^2;\Z/2) \cong \wedge^2 (\Z/2)$. Let $w_1 = e_1$ and let $w_2 = e_1 \wedge e_2$. Then $w_1^3 = e_1 \wedge e_1 \wedge e_1 = 0 = e_1 \wedge e_1 \wedge e_2 = w_1 w_2$ but $w_1^2 = e_1 \wedge e_1 = 0 \neq e_1 \wedge e_2 = w_2$. By \cref{thm:main}~\eqref{it:main-3}, it follows that there exist stably exotic $4$-manifolds with this normal 1-type. Note that since $w_2$ neither equals $w_1^2$ nor vanishes, these stably exotic $4$-manifolds admit neither a $\Pin^+$ nor a $\Pin^-$ structure.
\end{remark}

The example in \cref{remark-showing-ii-does-not-imply-iii}, with non-cyclic fundamental group and no $\Pin$ structure, contrasts with the following corollary.


\begin{corollary}\label{cor:pin-structures}
	Let $M$ and  $M'$ be a stably exotic pair with cyclic fundamental group. Then $M$ and $M'$ admit a tangential $\Pin^+$-structure, i.e.\ $w_2(TM)=0$.
\end{corollary}

\begin{proof}
	By \cref{remark:totally-non-spin}, $\wt M$ is necessarily spin. Hence, to show $w_2(TM)=w_2(\nu M)+w_1(\nu M)^2=0$, it suffices to show that the normal $1$-type $(\pi,w_1,w_2)$ satisfies $w_2=w_1^2$. This holds for $\pi\cong \Z$ because~$H^2(\Z;\Z/2) =0$.
	
	Now let $\pi$ be a finite cyclic group, necessarily of even order since $M$ and $M'$ are stably exotic and hence nonorientable.
It follows from \cref{thm:main}~\eqref{it:main-1} that $w_1w_2=w_1^3 \in H^3(\pi;\Z/2) \cong \Z/2$.
For such $\pi$, cupping with the nontrivial element $w_1\in H^1(\pi;\Z/2)$ induces an isomorphism $H^2(\pi;\Z/2)\to H^3(\pi;\Z/2)$, and hence $w_1w_2=w_1^3$ implies $w_2=w_1^2$,  as desired.
\end{proof}

\subsection{The case \texorpdfstring{$w_1^3=w_1w_2$}{w13=w1w2}}
We describe a secondary obstruction to the existence of stably exotic 4-manifolds with normal $1$-type $(\pi,w_1,w_2)$ such that $w_1^3=w_1w_2$.
For this we consider the fibration sequence
\[F \xrightarrow{u} K(\Z/2,2)\times K(\Z/2,1) \xrightarrow{\iota_1^3+\iota_1\iota_2}K(\Z/2,3),\]
where $\iota_k \in H^k(K(\Z/2,k);\Z/2) \cong \Z/2$, for $k=1,2$, are the generators, and $F$ is by definition the homotopy fibre of the map  $\iota_1^3+\iota_1\iota_2$.
We write
\[\Sq^2_{w_1,w_2} := \Sq^2(-)+ \big(w_1 \cup \Sq^1(-)\big) + \big(w_2 \cup - \big) \colon H^2(\pi,\Z/2)\to H^4(\pi;\Z/2).\]

\begin{thmx}
	\label{thm:B}
\leavevmode
	\begin{enumerate}[(i)]
		\item\label{it:B-1} The space $F$ is 3-coconnected with $\pi_1(F)\cong \Z/2$, $\pi_2(F)\cong (\Z/2)[\Z/2]$, and trivial $k$-invariant.
		\item\label{it:B-2}
	Let $p\colon \wt{F}\to F$ be the universal cover and let $\{x_1,x_2\}$ be the basis of $H^2(\wt{F};\Z/2) \cong \pi_2(F)$ such that $T^*x_1=x_2$, where $T$ is the deck transformation.
There exists a unique class $\mathfrak{o}\in H^4(F;\Z/2)$ such that $p^*\mathfrak{o}=x_1x_2$ and $s^*\mathfrak{o}=0$ for any section $s\colon K(\Z/2,1)\to F$.
		\item\label{it:B-3} If there exist stably exotic $4$-manifolds with normal 1-type $(\pi,w_1,w_2)$, then \[[f^*\mathfrak{o}] =0 \in H^4(\pi;\Z/2)/\im \Sq^2_{w_1,w_2},\]
		for any lift $f\colon B\pi\to F$ of $w_2\times w_1\colon B\pi\to K(\Z/2,2)\times K(\Z/2,1)$.
		\item \label{it:B-4} If $H_5(\pi;\Z)=0$, then the converse to \eqref{it:B-3} holds, i.e.\ if $w_1^3 = w_1w_2$ and
		\[[f^*\mathfrak{o}] = 0\in H^4(\pi;\Z/2)/\im \Sq^2_{w_1,w_2}\]
        for some lift $f\colon B\pi\to F$ of $w_2\times w_1\colon B\pi\to K(\Z/2,2)\times K(\Z/2,1)$,
		then there exist stably exotic $4$-manifolds with this normal 1-type.
	\end{enumerate}
\end{thmx}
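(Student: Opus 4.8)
The plan is to reduce the existence of stably exotic $4$-manifolds with normal $1$-type $(\pi,w_1,w_2)$ to a question in bordism, and then to solve that question by an explicit stabilised construction modelled on Kreck's. By Kreck's modified surgery, two $B$-manifolds with $2$-connected normal $1$-smoothings are stably diffeomorphic precisely when they agree in the smooth normal bordism group $\Omega_4^{\Diff}(B)$, and stably homeomorphic precisely when they agree in the topological normal bordism group $\Omega_4^{\Top}(B)$, where $B=B(\pi,w_1,w_2)$ is the normal $1$-type. Hence it suffices to produce two genuine $B$-manifolds $M,M'$ (each with $\pi_1\cong\pi$) whose difference is a nonzero element of $\ker\big(\Phi\colon\Omega_4^{\Diff}(B)\to\Omega_4^{\Top}(B)\big)$. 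Since $w_1\neq 0$ and the universal cover is spin, $\Omega_*(B)$ is a twisted spin bordism group over $B\pi$, with twist encoded by $(w_1,w_2)$.

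The construction I would carry out is as follows. First, using $w_1^3=w_1w_2$, realise the normal $1$-type by a closed smooth $B$-manifold $D$ with $\pi_1(D)\cong\pi$ and $2$-connected normal $1$-smoothing; producing such a $D$ with the general prescribed $w_2\in H^2(\pi;\Z/2)$, rather than merely $w_2=w_1^2$, is the first new ingredient beyond Kreck. Then set $M=D\#K3$ and $M'=D\#11(S^2\times S^2)$. In the topological twisted bordism group $\Omega_4^{\Top}(B)$ the classes of $M$ and $M'$ coincide: the orientation-reversing element of $\pi$, together with the topological $E_8$-manifold, allows the signature-$16$ discrepancy between the $K3$ and $11(S^2\times S^2)$ summands to be absorbed, since the smooth Rokhlin-type invariant detecting the $K3$ summand becomes divisible, hence trivial, after passing to the topological category. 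Thus $M$ and $M'$ are stably homeomorphic. Smoothly this absorption is obstructed by the Kirby--Siebenmann invariant $\ks$ of the $E_8$-manifold, which is nonzero, so $[M]\neq[M']$ in $\Omega_4^{\Diff}(B)$ and the two manifolds are not stably diffeomorphic.

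The role of the secondary obstruction $\mathfrak{o}$ is to guarantee that the normal $1$-smoothings of $M$ and $M'$ can be chosen to realise the prescribed $(\pi,w_1,w_2)$ with the $K3$ summand still detectable. Concretely, I would run the Atiyah--Hirzebruch (James) spectral sequence computing $\Omega_4^{\Diff}(B)$ and $\Omega_4^{\Top}(B)$; the twisting by $(w_1,w_2)$ makes the relevant $d_2$-differential the operation dual to $\Sq^2_{w_1,w_2}$, and the indeterminacy in the choice of lift $f$, i.e.\ of nullhomotopy of $w_1^3+w_1w_2$, acts on $H^4(\pi;\Z/2)$ exactly through $\im\Sq^2_{w_1,w_2}$. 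Identifying the spectral-sequence obstruction with $f^*\mathfrak{o}$ via its characterising properties $p^*\mathfrak{o}=x_1x_2$ and $s^*\mathfrak{o}=0$ then shows that $[f^*\mathfrak{o}]=0\in H^4(\pi;\Z/2)/\im\Sq^2_{w_1,w_2}$ is precisely the condition that the distinguished kernel class survives and is represented by a $B$-manifold, i.e.\ that a compatible pair of normal $1$-smoothings of $M$ and $M'$ exists.

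The main obstacle is this last identification together with the realisation step: one must show that the abstract kernel element of $\Phi$ is represented by honest closed smooth $4$-manifolds with the exact fundamental group $\pi$ and the prescribed $w_2$, and not merely by a bordism class. This is where $H_5(\pi;\Z)=0$ does real work, forcing the vanishing of the higher differentials into, and the extension problems around, total degree $5$ in the spectral sequence, so that the degree-$4$ kernel is computed cleanly as the relevant quotient of $H^4(\pi;\Z/2)$ and the obstruction to realisation reduces to the single class $[f^*\mathfrak{o}]$. Assembling these, the vanishing of $[f^*\mathfrak{o}]$ yields the desired stably exotic pair $M,M'$.
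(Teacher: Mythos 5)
Your outer frame matches the paper's: reduce via Kreck's stable diffeomorphism/homeomorphism criterion to the class $[K3]$ in $\Omega_4(\xi_\pi)$, take the pair $D \# K3$ and $D \# 11(S^2\times S^2)$, identify the $d_2$ differential with the dual of $\Sq^2_{w_1,w_2}$ (whence the lift-independence modulo $\im \Sq^2_{w_1,w_2}$), and use $H_5(\pi;\Z)=0$ to kill the remaining $d_5$ differential in part \eqref{it:B-4}. But there are two genuine gaps. First, your argument that $M = D\#K3$ and $M' = D\#11(S^2\times S^2)$ are never stably diffeomorphic --- ``obstructed by the Kirby--Siebenmann invariant of the $E_8$-manifold'' --- is wrong: both $M$ and $M'$ are smooth, so $\ks$ vanishes for both. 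The nontriviality of $\ks(E_8)$ only explains why $\Omega_4^{\Spin}\to\Omega_4^{\TopSpin}$ is the inclusion $16\Z\to 8\Z$; whether $[M]\neq[M']$ in $\Omega_4(\xi_\pi)$ is \emph{equivalent} to $[K3]\neq 0\in\Omega_4(\xi_\pi)$ (\cref{prop:strategy}), and this can fail even when $w_1^3=w_1w_2$: \cref{prop:ex1.5} gives exactly such an example. As written, your second paragraph would produce stable exotica for every $(\pi,w_1,w_2)$ with $w_1^3=w_1w_2$, contradicting part \eqref{it:B-3} of the theorem you are proving. The failure of stable diffeomorphism is available only \emph{after} one knows $[K3]$ survives the spectral sequence, i.e.\ under the hypotheses of \eqref{it:B-4}; it is not an unconditional consequence of the construction.

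Second, and more centrally, the identification of the $d_4$ differential with $\mathfrak{o}$ is asserted rather than proved, and it is the heart of the matter. You say the identification follows ``via the characterising properties $p^*\mathfrak{o}=x_1x_2$ and $s^*\mathfrak{o}=0$,'' but these properties merely single out one class in $H^4(F;\Z/2)$ (and even the existence and uniqueness claims, i.e.\ parts \eqref{it:B-1} and \eqref{it:B-2}, require proof: one must compute $\pi_1(F)\cong\Z/2$, $\pi_2(F)\cong(\Z/2)[\Z/2]$, the vanishing of the $k$-invariant, and the equivariant splitting $\wt F\simeq K_2\times K_2\times S^\infty$). Nothing in your proposal shows that the universal $d_4$ in the James spectral sequence for $\Omega_4(\xi_F)$ is nonzero, let alone dual to that particular class. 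The paper establishes this by genuinely new computations: $\Omega_4(\xi_{\wt F})\cong(\Z\oplus\Z)/(4,-4)$ via \cref{lem:wtF-bordism} (including a delicate $d_2$ on the $\Tor$ summand of $H_5$, which decides between $\ell=4$ and $\ell=8$); then $[K3]=0\in\Omega_4(\xi_{F^{(4)}})$ using the deck-transformation action $(z,z')\mapsto(-z',-z)$ on the image $(8\Z\oplus 8\Z)/(8,-8)$, so that $(16,0)=(8,8)=(8,0)+(-8,0)=(0,0)$ (\cref{lem:d4-nonzero}); and finally the determination $\mathfrak{o}=a_2$ among the three basis elements of $H^4(F;\Z/2)$, using sections $s\colon K_1\to F$ together with Kreck's theorem for $w_2=w_1^2$ on the section, and the triviality of $d_4$ over $\wt F$ (\cref{cor:d4}). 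Your proofs of both \eqref{it:B-3} and \eqref{it:B-4} rest entirely on this identification (via the naturality statement of \cref{prop:choice-of-lift}), so without it the argument is circular: you are using the conclusion of the hard computation to justify the construction. What remains correct in your proposal is the scaffolding; the content that makes the theorem true is missing.
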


For \cref{thm:B} \eqref{it:B-3} note that the existence of a lift $f$ follows from \cref{thm:main}~\eqref{it:main-1}.

\begin{remark}
	We will show in \cref{prop:choice-of-lift}, that the class $[f^*\mathfrak{o}]\in H^4(\pi;\Z/2)/\im \Sq^2_{w_1,w_2}$
	is independent of the choice of lift $f$.
\end{remark}

\begin{remark}\label{remark-in-intro-example}
	   In \cref{prop:ex1.5} we will use \cref{thm:B} \eqref{it:B-3} to give an example of a normal $1$-type $(\pi,w_1,w_2)$ such that $w_1^3=w_1w_2$ but $[f^*\mathfrak{o}] \neq 0$, and  no stably exotic $4$-manifolds with normal $1$-type $(\pi,w_1,w_2)$ exist. In particular, this shows that the converse of \cref{thm:main} \eqref{it:main-1} does not hold.
\end{remark}

	If $w_1^3=w_1w_2$ and $0=[f^*\mathfrak{o}]$, there is one further obstruction to the existence of  stably exotic $4$-manifolds with normal 1-type $(\pi,w_1,w_2)$. To describe it, we have to first give a brief overview of the proofs of \cref{thm:main,thm:B}.
	
	By \cref{prop:strategy}, the existence of stably exotic $4$-manifolds with normal 1-type $(\pi,w_1,w_2)$ depends on the bordism class of the $K3$ surface in $\Omega_4(\xi_\pi)$ as defined in \eqref{eqn:pullback-defining-B-xi}. There is a James spectral sequence converging to $\Omega_4(\xi_\pi)$ and the $K3$ surface generates the term $E_{0,4}^2=H_0(\pi;\Omega_4^{\spin})$. Since the action of $\pi$ on $\Omega_4^{\spin}$ is given by the nontrivial $w_1$, $E_{0,4}^2\cong \Z/2$. Hence $0=[K3]\in \Omega_4(\xi_\pi)$ if and only if there is a nontrivial differential with codomain $E_{0,4}^k$ for some $2\leq k<\infty$. The possible nontrivial differentials occur on the $3,4$ and $5$ page. In \cref{section:proof-thm-A}, we will show that the $d_3$ differential $E_{3,2}^3\cong H_3(\pi,\Z/2)/\im d_2\to E_{0,4}^3$ is dual to $w_1^3+w_1w_2$ showing \cref{thm:main} \eqref{it:main-1} and \eqref{it:main-3}. In \cref{section:determining-d-4-diff}, we will show that if $w_1^3=w_1w_2$, the $d_4$ differential is dual to
	\[[f^*\mathfrak{o}]\in H^4(\pi;\Z/2)/\im \Sq^2_{w_1,w_2}\]
	showing \cref{thm:B} \eqref{it:B-3}. We will end the article by showing that also the $d_5$ differential is nontrivial in general. For this we show that $0=[K3]\in \Omega_4(\xi_{G})$, where $G$ is by definition the homotopy fibre of $F\xrightarrow{\mathfrak{o}}K(\Z/2,4)$.

\subsection{Examples with small Euler characteristic}
	
	While \cref{thm:main,thm:B} abstractly show the existence of stably exotic pairs, the following theorem gives a realisation result with small Euler characteristic. 
	For a finitely presented group $G$, let $\defi(G)$ be the deficiency of $G$, that is the maximum over all presentations of the number of generators minus the number of relators. In the following result is a generalisation of  
\cite{torres}*{Theorem A}, and the proof is similar.  Torres used an $\eta$ invariant in his proof, which needs a tangential $\Pin^+$ structure for its definition. We use instead the connection between stable exotica and the $K3$ surface. 
As discussed in \cref{remark-showing-ii-does-not-imply-iii}, we find stable exotica $M$ and $M'$ that do not admit any $\Pin^{\pm}$ structure, so our result goes further. 

	\begin{thmx}
		\label{thm:C}
		Let $(G,w_1,w_2)$ be such that there exist stably exotic $4$-manifolds with normal $1$-type $(G,w_1,w_2)$. Then there exist $4$-manifolds $M,M'$ such that:
		\begin{enumerate}[(i)]
			\item\label{it:smallex1} $M$ and $M'$ have normal $1$-type $(G,w_1,w_2)$ and Euler characteristic $4-2\defi(G)$;
			\item\label{it:smallex2} $M$ and $M'$ are homeomorphic but not stably diffeomorphic;
			\item\label{it:smallex4} $M'$ is obtained from $M$ by performing a Gluck twist along a smoothly embedded sphere; 
			\item\label{it:smallex3} $M\#\CP^2$ and $M'\#\CP^2$ are diffeomorphic;
			\item\label{it:smallex5} the orientation double covers $\wh M$ and $\wh M'$ are diffeomorphic.
		\end{enumerate}
	\end{thmx}

\begin{remark}
Euler characteristic $4-2\defi(G)$ is the smallest known; we do not know whether it is optimal. 
\end{remark}

\subsection*{Organisation of the paper}
In \cref{section:pin-structures} we recall some facts we will need on Pin structures and on Pin bordism.
In \cref{section:background} we recall modified surgery theory, as it pertains to stable exotica. 
In \cref{sec:applications-modified} we apply modified surgery to the study of stable exotica, in particular laying out the strategy for the proofs of the \cref{section:proof-thm-A,sec:proof-thm-B}. We also prove \cref{thm:C} here. 
Then we prove \cref{thm:main} in \cref{section:proof-thm-A}. 
In \cref{sec:proof-thm-B} we define the universal triple $(F,v_1,v_2)$ satisfying $v_1^3 = v_1v_2$, and analyse its fourth bordism group, showing that it admits a nontrivial $d_4$ differential. In \cref{section:determining-d-4-diff} we define the class $\mathfrak{o}$, use it to describe the $d_4$ differential, and prove \cref{thm:B}. In \cref{section:determining-d-5-diff} we define the universal space $G$ with vanishing $d_3$ and $d_4$ differentials, and show that the $d_5$ differential is nontrivial for this space.

\subsection*{Acknowledgements}
We are grateful to Ulrich Bunke, Daniel Galvin, Markus Land, Peter Teichner, Rafael Torres, and Simona Vesel\'{a} for helpful discussions.
Mark Powell was partially supported by EPSRC New Investigator grant EP/T028335/2.


\section{Pin structures and Pin bordism}\label{section:pin-structures}

We briefly recall $\Pin^{\pm}$ structures on vector bundles, and the associated $4$-dimensional bordism groups.
We refer to Kirby-Taylor~\cite{KT} for more information.
Let $k \geq 3$ (the discussion can be extended to $k <3$, but we do not need this).
The groups $\Pin^{\pm}(k)$ are double covers of $\OO(k)$, and both fit into a central extension $0 \to \Z/2 \to \Pin^{\pm}(k) \to \OO(k) \to 1$. See \cite{KT}*{\S 1} for the precise definitions of $\Pin^{\pm}(k)$.

Let $\zeta = (E \xrightarrow{p} B)$ be a rank $k$ vector bundle over a CW complex $B$.  A \emph{$\Pin^{\pm}(k)$ structure} on $\zeta$ is a reduction of the structure group from $\OO(k)$ to $\Pin^{\pm}(k)$, or equivalently a lift of the classifying map as follows:
\[\begin{tikzcd}
	& \BPin^{\pm}(k) \ar[d] \\ B \ar[ur,dashed] \ar[r,"\zeta"] & \BO(k).
\end{tikzcd}\]
There are corresponding stable spaces $\Pin^{\pm}$ and $\BPin^{\pm}$, and a $\Pin^{\pm}$ structure on a stable vector bundle $\eta \colon B \to \BO$ is a lift of $\eta$ to $\BPin^{\pm}$ along $\BPin^{\pm}\to \BO$.

A $\Pin^+(k)$ structure on $\zeta$ is equivalent to the data of a spin structure on $\zeta \oplus 3 \det (\zeta)$, and $\zeta$ admits a $\Pin^+(k)$ structure if and only if $w_2(\zeta)=0 \in H^2(B;\Z/2)$.
Similarly, a $\Pin^-(k)$ structure on $\zeta$ is equivalent to a spin structure on $\zeta \oplus  \det (\zeta)$, and $\zeta$ admits a $\Pin^-(k)$ structure if and only if $w_2(\zeta) + w_1(\zeta)^2=0 \in H^2(B;\Z/2)$.
It follows that a spin manifold admits both a $\Pin^+$ and a $\Pin^-$ structure.
We will need the following two propositions on $\Pin^{\pm}$-structures.

\begin{proposition}\label{prop:pin-structures}
	A compact manifold $W$ of dimension at least three admits a tangential $\Pin^{\pm}$ structure if and only if the stable normal bundle $\nu_W$ of an embedding $W \hookrightarrow \R^n$, $n$ large, admits a $\Pin^{\mp}$ structure.
\end{proposition}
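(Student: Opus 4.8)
The plan is to reduce both conditions to a single statement about the Stiefel--Whitney classes of $W$ and then compare them. Since $\dim W \ge 3$, the tangent bundle $TW$ has rank at least $3$, so the unstable criteria recalled above apply: $TW$ admits a tangential $\Pin^+$ structure if and only if $w_2(W) := w_2(TW) = 0$, and a tangential $\Pin^-$ structure if and only if $w_2(W) + w_1(W)^2 = 0$. The stable normal bundle $\nu_W$ of an embedding $W \hookrightarrow \R^n$ with $n$ large likewise has large rank, so $\nu_W$ admits a $\Pin^+$ structure if and only if $w_2(\nu_W) = 0$, and a $\Pin^-$ structure if and only if $w_2(\nu_W) + w_1(\nu_W)^2 = 0$.

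Next I would compute the Stiefel--Whitney classes of $\nu_W$ from those of $W$. As $TW \oplus \nu_W$ is stably trivial, the Whitney sum formula gives $w(TW)\smile w(\nu_W) = 1$, whence in low degrees
\[ w_1(\nu_W) = w_1(W), \qquad w_2(\nu_W) = w_2(W) + w_1(W)^2, \]
working throughout with $\Z/2$ coefficients.

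Substituting these into the normal criteria finishes the proof. The normal $\Pin^-$ condition reads $w_2(\nu_W) + w_1(\nu_W)^2 = \big(w_2(W) + w_1(W)^2\big) + w_1(W)^2 = w_2(W)$, which vanishes precisely when $TW$ admits a tangential $\Pin^+$ structure. Symmetrically, the normal $\Pin^+$ condition reads $w_2(\nu_W) = w_2(W) + w_1(W)^2$, which vanishes precisely when $TW$ admits a tangential $\Pin^-$ structure. This establishes both directions of the equivalence, with the asserted swap of signs.

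There is no serious obstacle here: the argument is a mod-$2$ bookkeeping computation, and the only points requiring care are the dimension hypothesis (needed so that the unstable $\Pin^{\pm}(k)$ criteria apply to $TW$) and the consistent use of $\Z/2$ coefficients. The conceptual reason for the swap is already visible in the defining descriptions: a $\Pin^+$ structure on $\zeta$ is a spin structure on $\zeta \oplus 3\det\zeta$, while a $\Pin^-$ structure is a spin structure on $\zeta \oplus \det\zeta$, and passing from $TW$ to $\nu_W$ changes the $w_1^2$-contribution to $w_2$ by exactly one copy of $w_1(W)^2$. If a canonical correspondence of structures (rather than merely of their existence) were wanted, one could instead exhibit it directly, using the canonical spin structure on the trivial bundle $TW \oplus \nu_W$ together with the identification $\det TW \cong \det \nu_W$; but for the stated equivalence the characteristic-class computation suffices.
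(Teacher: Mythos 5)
Your proof is correct and follows essentially the same route as the paper's: the paper's entire argument is the observation that triviality of $TW \oplus \nu_W$ and the Whitney sum formula give $w_1(\nu_W)=w_1(TW)$ and $w_2(\nu_W)=w_2(TW)+w_1(TW)^2$, which combined with the existence criteria ($w_2=0$ for $\Pin^+$, $w_2+w_1^2=0$ for $\Pin^-$) yields the swap. You have merely spelled out the substitution and the role of the dimension hypothesis more explicitly than the paper does.
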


\begin{proof}
	Since $TW\oplus \nu_W$ is trivial for any embedding $W\hookrightarrow \R^n$, the Whitney sum formula implies that $w_1(TW)=w_1(\nu_W)$, $w_2(TW) = w_1(\nu_W)^2 + w_2(\nu_W)$, and $w_2(\nu_W) = w_1(TW)^2 + w_2(TW)$.
\end{proof}

Using $\Pin^{\pm}$ structures on tangent bundles of $4$-manifolds and bordisms between them, there are corresponding bordism groups $\Omega_4^{\Pin^{\pm}}$.

\begin{proposition}[\cite{ABP69}, {\cite{KT}*{Theorem~5.2~and~Lemma~5.3}}]\label{prop:pin-4-bordism-groups}
	The $4$-dimensional $\Pin^{\pm}$ bordism groups are $\Omega_4^{\Pin^-} = 0$ and $\Omega_4^{\Pin^+} \cong \Z/16$. In the latter group $[K3]$ is order two, corresponding to $8 \in \Z/16$.
\end{proposition}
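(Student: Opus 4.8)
The plan is to identify these bordism groups with homotopy groups of Thom spectra via Pontryagin--Thom and to compute them by the standard reduction to $\mathrm{ko}$-homology in low degrees. Using \cref{prop:pin-structures} to pass freely between tangential and normal structures, I would write $\Omega_4^{\Pin^{\pm}}\cong \pi_4$ of the corresponding Thom spectra, and then feed in the twisting descriptions recalled above: a $\Pin^-$ (resp.\ $\Pin^+$) structure on $\zeta$ is a spin structure on $\zeta\oplus\det\zeta$ (resp.\ $\zeta\oplus 3\det\zeta$). These exhibit the relevant Thom spectra as $M\Spin$ smashed with the Thom spectrum over $\RP^{\infty}=BO(1)$ of a small integer multiple of the tautological line bundle $L$, i.e.\ with a stunted projective spectrum. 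First I would invoke that the Atiyah--Bott--Shapiro map $M\Spin\to\mathrm{ko}$ is an isomorphism on homotopy in degrees at most $7$, which lets me replace $M\Spin$ by $\mathrm{ko}$ throughout the range we need and reduces everything to the classical $\mathrm{ko}$-homology of stunted projective spectra.

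Running the James (equivalently Atiyah--Hirzebruch) spectral sequence for these Thom spectra then produces $\Omega_4^{\Pin^-}=0$ and $\Omega_4^{\Pin^+}\cong\Z/16$. The only genuine subtlety is the resolution of the $2$-primary extension yielding $\Z/16$ rather than a smaller group, which is exactly the input encoded in the $\mathrm{ko}$-homology of $\RP^{\infty}$ and is where I would lean on the Anderson--Brown--Peterson and Kirby--Taylor computations. Along the way I would identify the geometric generator of $\Omega_4^{\Pin^+}$ as $\RP^4$ and record that the group is detected by a $\Z/16$-valued $\eta$-invariant of the $\Pin^+$ Dirac operator.

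For the statement about $K3$, the key observation is that $K3$ is spin and hence carries a $\Pin^+$ structure, so its class lies in the image of the forgetful homomorphism $\Omega_4^{\Spin}\to\Omega_4^{\Pin^+}$, i.e.\ of a map $\Z\to\Z/16$. Since the isomorphism $\Omega_4^{\Spin}\cong\Z$ is given by $\sign/16$ and $\sign(K3)=-16$, the surface $K3$ is a generator of $\Omega_4^{\Spin}$; thus determining $[K3]$ amounts to determining this homomorphism on a generator. I would compute it by restricting the $\Z/16$ $\eta$-invariant to spin manifolds, where the Atiyah--Patodi--Singer index formula together with the signature theorem identifies it with $\tfrac12\sign \bmod 16$. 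This sends $[K3]$ to $\tfrac12(-16)=-8\equiv 8$, the unique element of order two, so $[K3]$ has order two and corresponds to $8\in\Z/16$.

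The main obstacle is this final step: pinning down the exact value $8$ (equivalently the restriction formula $\tfrac12\sign\bmod 16$) rather than merely confirming that $[K3]$ lies in $\{0,8\}$. Establishing that $[K3]\neq 0$ with order exactly two requires a genuine $\Pin^+$ invariant that is nonzero on $K3$; the cleanest route is the $\eta$-invariant computation above, but one could alternatively track the generator $\RP^4$ and the image of $\Omega_4^{\Spin}$ through the spectral sequence filtration, which is precisely the content of \cite{KT}*{Theorem~5.2~and~Lemma~5.3} cited here.
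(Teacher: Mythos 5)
The paper gives no internal proof of this proposition---it is quoted verbatim from Anderson--Brown--Peterson and Kirby--Taylor---and your sketch is a faithful reconstruction of exactly how those cited sources establish it: the Pontryagin--Thom/ABS reduction to $\mathrm{ko}$-homology of stunted projective spectra for $\Omega_4^{\Pin^-}=0$ and $\Omega_4^{\Pin^+}\cong\Z/16$ with generator $\RP^4$, and the identification of the spin restriction as $\tfrac{1}{2}\sign \bmod 16$ (equivalently $[K3]=8$), which is precisely the content of \cite{KT}*{Theorem~5.2 and Lemma~5.3}. You correctly flag that the value $8$ is the genuine crux rather than a formal consequence of the group computation, and your two proposed routes (the $\Z/16$-valued $\eta$-invariant, or tracking the image of $\Omega_4^{\Spin}$ through the filtration) are the standard ways the literature resolves it, so your outline is consistent with the paper's citation-based treatment.
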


\section{Modified surgery theory}\label{section:background}

In this section we recall the facts and tools that we will need from the stable classification part of modified surgery theory, introduced by Kreck~\cite{surgeryandduality}.
We will only consider  spaces that have the homotopy type of a CW complex.

\begin{definition}
	Let $M$ be a closed, smooth $4$-manifold. A \emph{normal 1-type} of $M$ is
	a fibration over $\BO$, denoted by $\xi\colon B \to \BO$, through which a map representing the stable
	normal bundle $\nu_M \colon M \to \BO$ factors as follows:
	\[\begin{tikzcd}
		&B\ar[d,"\xi"]\\
		M\ar[r,"\nu_M"']\ar[ur,"\ol{\nu}_M"]&\BO
	\end{tikzcd}\]
	with $\ol\nu_M$ a 2-connected map and $\xi$ a 2-coconnected map. A choice of $\ol\nu_M$ is called
	a \emph{normal 1-smoothing} of $M$.
\end{definition}
All the normal $1$-types of $M$ are fibre homotopy equivalent to one another~\cite{Baues-obstruction-theory}. The fibre homotopy type determines and is determined by $(\pi,w_1,w_2)$, which justifies our alternative equivalent definition of the normal 1-type given in the introduction.

\begin{theorem}[{\cite{surgeryandduality}*{Theorem~C}, \cite{Crowley-Sixt}*{Lemma~2.3}}]\label{thm:kreck-thm-C-smooth}
	Two closed, smooth $4$-manifolds with fibre homotopy equivalent normal 1-types are stably diffeomorphic if and only if they have the same Euler characteristics and  admit bordant normal 1-smoothings.
\end{theorem}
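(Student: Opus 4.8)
This is Kreck's stable classification theorem, so my plan is to reconstruct the modified-surgery argument rather than to cite it. \textbf{Necessity} is the easy direction. A stable diffeomorphism $M\#k(S^2\times S^2)\cong M'\#k(S^2\times S^2)$ forces $\chi(M)=\chi(M')$, since each $S^2\times S^2$ summand raises the Euler characteristic by $2$ and these contributions cancel on both sides. Moreover $S^2\times S^2$ bounds $S^2\times D^3$, which carries a normal structure restricting to the given one, so connected sum with it does not change the class in $\Omega_4(\xi)$; hence the normal $1$-smoothings of $M$ and $M'$ are bordant over $\xi$.

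\textbf{Sufficiency} is where the work lies. Given that the normal $1$-smoothings are bordant, I would fix a normal bordism $(W^5;M,M')$ over $\xi\colon B\to\BO$ and first perform \emph{surgery below the middle dimension}. Using that $\xi$ is $2$-coconnected, I would represent generators of $\pi_i(B,W)$ for $i\le 2$ by embedded circles and $2$-spheres (the lift of the normal bundle to $B$ supplying the data needed to frame them) and surger them, making $\ol{\nu}_W\colon W\to B$ a $2$-connected map rel boundary. Since $\dim W=5$, after this step the surgery kernel is concentrated in the middle dimension, where it carries a quadratic refinement of the equivariant intersection form over $\Z[\pi]$; note that $\chi(W)=\tfrac12(\chi(M)+\chi(M'))$ is unchanged by interior surgery, so the Euler-characteristic hypothesis pins down the rank of this kernel.

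\textbf{The middle-dimensional obstruction} is the main obstacle. Because $\dim W=5=2\cdot 2+1$ is odd, the obstruction to simplifying $W$ is Kreck's element $\theta(W)$ in the monoid $l_5(\Z[\pi],w_1)$, built from the quadratic form above; when $\theta(W)=0$, stabilising $W$ by summands $S^2\times S^2\times[0,1]$ allows the middle-dimensional handles to cancel and produces a stable diffeomorphism $M\cong M'$ (this is where stabilisation is essential, replacing the unavailable smooth $5$-dimensional $s$-cobordism theorem). The crux is therefore to arrange $\theta(W)=0$, and this is exactly where $\chi(M)=\chi(M')$ is used: with the rank of the surgery kernel balanced, the form splits off hyperbolics and the obstruction can be trivialised. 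I would invoke the Crowley--Sixt refinement (Lemma 2.3) to make this precise, concluding that bordant normal $1$-smoothings together with equal Euler characteristics guarantee a normal bordism with vanishing obstruction, and hence a stable diffeomorphism.
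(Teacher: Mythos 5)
Your necessity direction is fine, and so is the first step of sufficiency (surgering the interior of a normal bordism $W$ over $\xi$ to make $\ol\nu_W$ $2$-connected). Note that the paper itself offers no proof of this statement, citing Kreck's Theorem~C and Crowley--Sixt, so the comparison must be with Kreck's actual argument. There your middle step goes wrong. Kreck's obstruction $\theta(W)\in l_5(\Z[\pi],w_1)$ is the obstruction to replacing $W$, rel boundary, by an \emph{s-cobordism}; it is the engine of the \emph{unstable} classification (Kreck's Theorem~3 and its applications by Hambleton--Kreck--Teichner), and its vanishing is a strictly stronger condition than what is being proved here. Your claim that equal Euler characteristics let one ``split off hyperbolics'' and arrange $\theta(W)=0$ is unjustified and cannot be right: if bordant normal $1$-smoothings plus $\chi(M)=\chi(M')$ always produced a bordism with $\theta(W)=0$, then every such pair would be smoothly s-cobordant, collapsing the stable classification into the s-cobordism classification --- precisely the distinction Kreck's $l$-monoids exist to measure. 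The algebraic justification offered is also not of the right shape: $\theta(W)$ is an odd-dimensional obstruction, represented by a quasi-formation-type object (half-rank data recording how the middle-dimensional kernel of $W$ is presented), not a quadratic form in a Witt group where balancing ranks would permit hyperbolic splitting. So the crux of your sufficiency argument is a gap.

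The actual proof never touches $\theta$. After making $\ol\nu_W\colon W\to B$ $2$-connected, one trades handles: since $\pi_1(M)\to\pi_1(W)$ and $\pi_1(M')\to\pi_1(W)$ are isomorphisms, all handles of index $0,1,4,5$ can be removed, leaving a decomposition of $W$ rel $M$ with only $2$- and $3$-handles, whose attaching circles may be arranged to be null-homotopic in the boundary. The middle level $N$, between the $2$- and $3$-handles, is then simultaneously the result of surgeries on null-homotopic circles in $M$ and (dually) in $M'$; surgery on a null-homotopic circle yields connected sum with $S^2\times S^2$ or $S^2\mathrel{\wt\times}S^2$, and the normal $1$-smoothing data pins down the framings so that one obtains $N\cong M\# k(S^2\times S^2)\cong M'\# l(S^2\times S^2)$ (in the relevant case of spin universal cover the untwisted bundle is forced; otherwise the two bundles give diffeomorphic sums anyway). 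The Euler characteristic hypothesis enters only now, and purely as bookkeeping: $\chi(M)+2k=\chi(N)=\chi(M')+2l$, so $\chi(M)=\chi(M')$ forces $k=l$, which is exactly what ``stably diffeomorphic'' (equal numbers of stabilisations) requires; making this bookkeeping precise is the content of the cited Crowley--Sixt lemma. No middle-dimensional quadratic form is analysed at any point.
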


It follows that to understand the stable diffeomorphism classes of $4$-manifolds with normal $1$-type $\xi$, one has to compute the bordism group $\Omega_4(\xi)$ of closed $4$-manifolds with a map $M \to B$ lifting the stable normal bundle $M \to \BO$ along $\xi$.

Forgetting that $M$ admits a smooth structure, we can consider analogous fibrations over $\BTop$.

\begin{definition}
	We call a 2-coconnected fibration $\xi^{\Top} \colon B^{\Top} \to \BTop$ a \emph{topological normal 1-type} of $M$ if the topological stable normal bundle $\nu^{\Top}_M \colon M \to \BTop$ factors as follows:
	\[\begin{tikzcd}[column sep=large]
		&B^{\Top} \ar[d,"\xi^{\Top}"]\\
		M\ar[r,"{\nu^{\Top}_M}"']\ar[ur,"{\ol{\nu}^{\,\Top}_M}"]&\BTop
	\end{tikzcd}\]
	with $\ol{\nu}^{\Top}_M$ a 2-connected map. We call a choice of $\ol{\nu}^{\Top}_M$
	a \emph{topological normal 1-smoothing} of $M$.
\end{definition}

The analogue of~\cref{thm:kreck-thm-C-smooth} also holds in the topological category.

\begin{theorem}[Kreck, Crowley--Sixt]\label{thm:kreck-thm-C-top}
	Two closed $4$-manifolds with fibre homotopy equivalent topological normal 1-types are stably homeomorphic if and only if they have the same Euler characteristics and admit bordant topological normal 1-smoothings.
\end{theorem}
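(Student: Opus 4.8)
The plan is to run the proof of the smooth statement \cref{thm:kreck-thm-C-smooth} in the topological category: Kreck's modified surgery argument and the Crowley--Sixt refinement use only geometric inputs --- handle decompositions, transversality, and surgery below the middle dimension --- that are available for topological manifolds of dimension five. One direction is immediate. If $M_0$ and $M_1$ are stably homeomorphic, say $M_0\#k(S^2\times S^2)\homeo M_1\#k(S^2\times S^2)$, then since $\chi\big(M_i\#k(S^2\times S^2)\big)=\chi(M_i)+2k$ we get $\chi(M_0)=\chi(M_1)$; and because each $M_i$ is topologically normally bordant to its stabilisation $M_i\#k(S^2\times S^2)$ --- via the trace of surgeries on null-homotopic circles, over which the reference map to $B^{\Top}$ extends --- the homeomorphism of stabilisations together with naturality of the topological normal bundle produces bordant topological normal 1-smoothings of $M_0$ and $M_1$.

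For the substantive converse I would begin with a topological bordism $W^5$ over $\xi^{\Top}\colon B^{\Top}\to\BTop$ between normal 1-smoothings $\ol\nu_{M_0}$ and $\ol\nu_{M_1}$, with $\chi(M_0)=\chi(M_1)$, and first perform surgery on the interior of $W$ to make the reference map $c\colon W\to B^{\Top}$ $2$-connected. As $B^{\Top}$ is $2$-coconnected, this needs only surgeries on embedded circles (to correct $\pi_1$) and embedded $2$-spheres (to correct $\pi_2$), and these sit in $W^5$ in codimension at least three. In this range the embeddings, their normal microbundles, and the framings required to perform the surgeries are unobstructed exactly as in the smooth case, and each surgery lies strictly below the middle dimension of $W$, so no surgery-kernel obstruction arises and no hypothesis on $\pi$ is required.

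Once $c$ is $2$-connected, comparing Poincaré--Lefschetz duality with the connectivity of $\ol\nu_{M_0}$, $\ol\nu_{M_1}$, and $c$ shows that $W$ can be built from $M_0\times I$ using handles of index $2$ and $3$ only. The attaching circles of the $2$-handles are null-homotopic, and the spin condition on the universal cover fixes their framings so that the level set $N$ separating the $2$-handles from the $3$-handles is homeomorphic to $M_0\#k(S^2\times S^2)$, where $k$ is the number of $2$-handles; turning $W$ upside down exhibits the $3$-handles as dual $2$-handles attached to $M_1$, so that the same $N$ is also $M_1\#\ell(S^2\times S^2)$. This is where the Crowley--Sixt refinement enters: from $\chi(N)=\chi(M_0)+2k=\chi(M_1)+2\ell$ and the hypothesis $\chi(M_0)=\chi(M_1)$ one concludes $k=\ell$, so $M_0$ and $M_1$ are stably homeomorphic with the same number of stabilisations.

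The main obstacle is to certify the two geometric operations above in the topological setting: the existence of a handle decomposition of the $5$-dimensional topological bordism $W$, and topological transversality for the embedded surgery data. These are supplied by Kirby--Siebenmann in dimensions at least six and by Quinn and Freedman--Quinn in dimension five. The key structural point is that the argument never invokes the $s$-cobordism theorem --- the stable homeomorphism is extracted from the middle level $N$ of $W$ rather than by trivialising $W$ itself --- so no good-fundamental-group hypothesis is needed, and the conclusion holds for the arbitrary finitely presented groups $\pi$ relevant to stable exotica.
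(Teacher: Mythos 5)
The paper offers no proof of this statement at all: it simply records the result with an attribution, the smooth case being \cite{surgeryandduality}*{Theorem~C} and \cite{Crowley-Sixt}*{Lemma~2.3}, and asserts that the analogue holds in $\Top$. Your proposal correctly reconstructs the argument behind those citations, so in substance you have supplied the proof the paper delegates to the literature. The outline is right: the easy direction via the trace of trivial surgeries and additivity of $\chi$; surgery on the interior of $W^5$ to make $c\colon W\to B^{\Top}$ $2$-connected (codimension $\geq 3$, so below the middle dimension); handle trading to leave only $2$- and $3$-handles; reading off the middle level $N$ from both ends; and the Crowley--Sixt Euler characteristic bookkeeping $k=\ell$. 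You also correctly identify the two genuinely topological inputs -- handle decompositions of compact $5$-manifolds (Quinn; Freedman--Quinn, Theorem~9.1) and topological transversality -- and, importantly, that the $s$-cobordism theorem is never invoked, so no good-fundamental-group hypothesis is needed; this is exactly why the stable classification works for arbitrary finitely presented $\pi$. One small caveat: your framing step appeals to the spin condition on the universal cover, which is the relevant case under \cref{hyp}, but the theorem as stated concerns arbitrary closed $4$-manifolds. In the case where the normal $1$-type does not force a spin universal cover, a wrongly framed $2$-handle can produce $S^2\mathrel{\wt\times}S^2$ summands in $N$; there the argument is completed not by a framing constraint but by Wall's identification $M\#(S^2\mathrel{\wt\times}S^2)\homeo M\#(S^2\times S^2)$ when $w_2(\wt M)\neq 0$ (cf.\ \cref{remark:totally-non-spin}), so that $N\homeo M_0\#k(S^2\times S^2)$ in all cases. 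With that case added, your sketch is a faithful topological-category version of the Kreck/Crowley--Sixt proof.
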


Thus to understand the difference between stable homeomorphism and stable diffeomorphism, one can consider the forgetful map $\Omega_4(\xi)\to\Omega_4(\xi^{\Top})$ between the bordism groups of the smooth and topological normal 1-types associated to a fixed triple $(\pi,w_1,w_2)$.

The main examples of normal 1-types will be as in the next definition.

\begin{definition}\label{defn:key-pullback}
	Let $Y$ be a space together with a fibration $(w_1,w_2) \colon Y\xrightarrow{} K(\Z/2,1) \times K(\Z/2,2)$.
    \begin{enumerate}
        \item
    Then we write $\xi_Y\colon B_Y\to \BO$ for the pullback
	\[  \begin{tikzcd}[column sep = large]
		B_Y \ar[r] \ar[d,"\xi_Y"] & Y \ar[d,"{(w_1,w_2)}"] \\ \BO \ar[r,"P_2(-)"] & K(\Z/2,1) \times K(\Z/2,2).
	\end{tikzcd}\]
\item We let $\xi_Y^{\Top} \colon B_Y^{\Top} \to \BTop$ be the analogous pullback in the topological category.
\item When $Y= B\pi$, for a group $\pi$, we write $\xi_\pi \colon B_\pi \to \BO$ and $\xi_\pi^{\Top} \colon B_\pi^{\Top} \to \BTop$.
    \end{enumerate}
\end{definition}

As in \cref{hyp}, recall that we consider closed, nonorientable, smooth $4$-manifolds~$M$ with spin universal cover.

\begin{lemma}[\cite{surgeryandduality}*{p.~713}, \cite{teichner-phd}*{Theorem~2.2.1~b)(I)}]\label{lemma:def-key-pullback-gives-correct-1-type}
    Taking $Y:= B\pi_1(M)$ and $w_1 \neq 0 \in H^1(\pi;\Z/2)$ and $w_2 \in H^2(\pi;\Z/2)$ as in the introduction, \cref{defn:key-pullback} gives the smooth and topological normal 1-types of~$M$.
\end{lemma}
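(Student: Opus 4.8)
The plan is to verify the two defining properties of a normal $1$-type directly from the pullback square in \cref{defn:key-pullback}: that $\xi_\pi$ is a $2$-coconnected fibration, and that the normal Gauss map $\nu_M$ admits a $2$-connected lift $\ol\nu_M\colon M\to B_\pi$. I would begin by recording the low-dimensional homotopy of $\BO$: one has $\pi_1(\BO)\cong\pi_2(\BO)\cong\Z/2$, detected by $w_1$ and $w_2$, and $\pi_3(\BO)=0$, so that $P_2\colon \BO\to K(\Z/2,1)\times K(\Z/2,2)$ is the second Postnikov truncation, with $2$-connected homotopy fibre $\BSpin$. Since $\xi_\pi$ is the pullback of the fibration $(w_1,w_2)\colon B\pi\to K(\Z/2,1)\times K(\Z/2,2)$ along $P_2$, it is again a fibration, and its homotopy fibre agrees with that of $(w_1,w_2)$.

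Next I would compute this homotopy fibre $F:=\hofib(w_1,w_2)$. As $B\pi$ has homotopy concentrated in degree $1$ and the target in degrees $1$ and $2$, the long exact sequence of the fibration reads
\[0\to \pi_2(K(\Z/2,2))\to \pi_1(F)\to \pi \xrightarrow{w_1}\Z/2\to \pi_0(F),\]
together with $\pi_j(F)=0$ for $j\geq 2$. Here the nonorientability hypothesis of \cref{hyp} enters decisively: $w_1\neq 0$ means $w_1\colon \pi\to\Z/2$ is surjective, so $F$ is connected and $\pi_1(F)$ is an extension of $\ker w_1$ by $\Z/2$. In particular $F$ is aspherical, which is exactly the statement that $\xi_\pi$ is $2$-coconnected. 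Dually, viewing the upper horizontal map $B_\pi\to B\pi$ as the pullback of $P_2$, its fibre is $\BSpin$, and the associated long exact sequence yields $\pi_1(B_\pi)\cong\pi$ and $\pi_2(B_\pi)=0$, with $B_\pi\to B\pi$ an isomorphism on $\pi_1$.

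For the lift I would invoke the universal property of the pullback: a lift of $\nu_M$ is the same data as a map $c\colon M\to B\pi$ together with a homotopy $P_2\circ\nu_M\simeq (w_1,w_2)\circ c$. Taking $c$ to be a $2$-connected classifying map of $M$, the two composites $M\to K(\Z/2,1)\times K(\Z/2,2)$ are classified by $(w_1(\nu_M),w_2(\nu_M))$ and $(c^*w_1,c^*w_2)$ respectively; these agree precisely by the defining property $c^*(w_i)=w_i(\nu_M)$ recalled in the introduction, where the spin universal cover is used to guarantee that $w_2(\nu_M)$ lifts to $H^2(\pi;\Z/2)$. Hence the composites are homotopic and a lift $\ol\nu_M$ exists. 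Finally, since $B_\pi\to B\pi$ and $c$ are both isomorphisms on $\pi_1$, so is $\ol\nu_M$, and $\pi_2(B_\pi)=0$ forces $\ol\nu_M$ to be surjective on $\pi_2$; thus $\ol\nu_M$ is $2$-connected, completing the smooth case. The topological case is identical once one observes that $\BO\to\BTop$ is an isomorphism on $\pi_1$ and $\pi_2$, so that $P_2(\BTop)=K(\Z/2,1)\times K(\Z/2,2)$ with $2$-connected fibre, and that the smooth and topological Stiefel--Whitney classes of the smooth manifold $M$ coincide.

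I expect the main obstacle to be bookkeeping rather than anything conceptual: one must check carefully that $F$ really is aspherical and that $\pi_1(B_\pi)\cong\pi$, both of which rest essentially on the surjectivity of $w_1\colon \pi\to\Z/2$. A secondary point deserving attention is the identification $P_2(\BO)\simeq K(\Z/2,1)\times K(\Z/2,2)$, i.e.\ the vanishing of the first $k$-invariant of $\BO$; this is what makes $w_2$ a genuine global class and is already built into the notation of \cref{defn:key-pullback}.
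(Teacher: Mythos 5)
Your proposal is correct, and since the paper offers no argument of its own for this lemma—it defers entirely to Kreck \cite{surgeryandduality}*{p.~713} and Teichner \cite{teichner-phd}*{Theorem~2.2.1}—your verification is exactly the standard proof found in those references: identifying $P_2(\BO)\simeq K(\Z/2,1)\times K(\Z/2,2)$ (vanishing first $k$-invariant) with $2$-connected fibre $\BSpin$, deducing $2$-coconnectedness of $\xi_\pi$ from asphericity of $\hofib(w_1,w_2)$, and obtaining the $2$-connected lift $\ol\nu_M$ from the pullback property together with $c^*(w_i)=w_i(\nu_M)$, the latter being where \cref{hyp} (spin universal cover) enters. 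Your bookkeeping is also accurate on the finer points: the strict pullback computes homotopy fibres because $(w_1,w_2)$ is arranged to be a fibration, surjectivity of $w_1\colon\pi\to\Z/2$ is used only for connectedness of the fibre, and the topological case reduces to the smooth one since $\BO\to\BTop$ is an isomorphism on $\pi_1$ and $\pi_2$ and the topological Stiefel--Whitney classes of a smooth manifold agree with the smooth ones.
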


\begin{remark}\label{remark:James-SS}
The bordism groups $\Omega_4(\xi_Y)$ and $\Omega_4(\xi^{\Top}_Y)$ can be computed using \emph{James spectral sequences}.
They have the form  \[E^2_{p,q}=H_p(Y;\Omega_q^{\Spin}) \, \Rightarrow \, \Omega_*(\xi_Y)\] and
\[E^{2,\Top}_{p,q}=H_p(Y;\Omega_q^{\TopSpin}) \, \Rightarrow \, \Omega_*(\xi^{\Top}_Y),\]
$p,q \geq 0$, respectively.
For $\xi_Y$ orientable these were constructed by Teichner~\cite{teichner-signature}*{Section~II}.
For $\xi_Y$ nonorientable, if there exists a vector bundle $E \to Y$ such that $w_i(E) = w_i \in H^i(Y;\Z/2)$, then the adaptation of the James spectral sequence is constructed in \cite{teichner-star}*{Before~Lemma~2}.

For the general case, it is possible to adapt the construction of \cite{teichner-signature}*{Section~II, pp.~748-9}.
A detailed proof will appear in forthcoming work of Galvin--Teichner--Vesel\'a. We prefer to give credit to that work and not to duplicate effort, so here we will just give an outline of the adaptation.

Here is a description of how adapt the proofs in the literature, that were given for orientable~$\xi_Y$, to the case of nonorientable~$\xi_Y$. We focus on the proof in \cite{teichner-signature}.
On \cite{teichner-signature}*{p.~748}, replace $\BSO$ with $\BO$, $w \colon K(\pi,1) \to K(\Z/2,2)$ with $(w_1,w_2) \colon Y \to K(\Z/2,1) \times K(\Z/2,2)$, and $w_2(\gamma)$ with $(w_1(\gamma),w_2(\gamma))$ in diagram ($\ast$) on p.~748.
In the statement of \cite{teichner-signature}*{Theorem, p.~748}, using stable homotopy theory $\pi_*^{\mathrm{st}}$ for $h_*$, the coefficients are $\pi_q^{\mathrm{st}}(M(\xi|_{\BSpin})) \cong \Omega_q^{\Spin}$.  We replace $\xi|_{\BSpin} \colon \BSpin \to \BSO$ with $\xi|_{\BSpin} \colon \BSpin \to \BO$. The stable homotopy groups of the associated Thom spectrum are still $\Omega_q^{\Spin}$, but now the bordism groups inherit the natural action of $\pi_1(\BO) \cong \Z/2$, where the nontrivial element acts by changing the underlying orientation, and so the coefficients are twisted using this action.
In the proof of \cite{teichner-signature}*{Theorem, p.~748}, the relative Leray--Serre spectral sequence appealed to on \cite{teichner-signature}*{p.~749} makes use of the fact that the fibration is $h$-orientable. However the $h$-orientable assumption is not needed if we use twisted coefficients \cite{whitehead-elements}*{Theorem~XIII.4.9}. Since the fibration is constructed using the pullback along $(w_1,w_2)$, the action of $\pi_1(Y)$ on the coefficients factors through the $\Z/2$ action just described. In this way we obtain the James spectral sequence for $\xi_Y$ nonorientable, without needing to assume the existence of the vector bundle $E \to Y$.
\end{remark}

\begin{assumption}\label{assumption}
	Here and throughout the article, when the coefficients for homology are $\Omega_q^{\Spin}$ or $\Omega_q^{\TopSpin}$, they are assumed to be twisted by $w_1$, in the sense that nontrivial $w_1$ induces multiplication by $-1$.  This will primarily be relevant in this article for $q=4$.
\end{assumption}

One of the virtues of the James spectral sequence is that its $d_2$ differentials can be computed explicitly in low degrees via standard methods in algebraic topology, as follows.  Recall that $\Omega_0^{\Spin} \cong \Z$ and $\Omega_1^{\Spin} = \Omega_2^{\Spin}=\Z/2$; we will use these identifications implicitly in the next proposition.

\begin{proposition}[Teichner]
	\label{prop:differentials}
	Let $Y\xrightarrow{(w_1,w_2)}K(\Z/2,1) \times K(\Z/2,2)$ be a fibration and let $\xi_Y\colon B_Y\to \BO$ be the pullback from \cref{defn:key-pullback}.
		\begin{enumerate}[(i)]
			\item \label{prop:differentials-item-i}
		For $p\leq 4$, the differential $d_2\colon H_p(Y;\Omega_1^{\Spin})\to H_{p-2}(Y;\Omega_2^{\Spin})$ is the dual of the map
        \begin{align*}
		    H^{p-2}(Y;\Z/2) &\to H^p(Y;\Z/2) \\
		x &\mapsto \Sq^2_{w_1,w_2}(x) = \Sq^2(x)+w_1\Sq^1(x)+w_2x.
        \end{align*}
    \item \label{prop:differentials-item-ii}
    For $p\leq 5$, the differential $d_2 \colon H_p(Y;\Omega_0^{\Spin})\to H_{p-2}(Y;\Omega_1^{\Spin})$ is the composition of the reduction of coefficients $H_p(Y;\Z^{w_1})\to H_p(Y;\Z/2)$ with the dual of the map $\Sq^2_{w_1,w_2} \colon H^{p-2}(Y;\Z/2) \to H^p(Y;\Z/2)$ in \eqref{prop:differentials-item-i}.
			\end{enumerate}
\end{proposition}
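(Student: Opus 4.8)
The plan is to identify the James spectral sequence with the Atiyah--Hirzebruch spectral sequence (AHSS) of a Thom spectrum and to read off the $d_2$ differentials from the low-dimensional $k$-invariants of the spin bordism spectrum. Concretely, $\Omega_*(\xi_Y)$ is the homotopy of the Thom spectrum $M\xi_Y$ of the stable bundle $\xi_Y\colon B_Y\to\BO$, and since the homotopy fibre of $\xi_Y$ is $\BSpin$, the coefficient spectrum controlling the AHSS is $M\Spin$, with $\pi_0\cong\Z$, $\pi_1\cong\pi_2\cong\Z/2$, and $\pi_3=0$. I would use the standard fact that the $d_2$ differentials of an AHSS are dual to the first two $k$-invariants of the coefficient spectrum, with coefficients twisted by the orientation character $w_1$ of the $\Z/2$-action from \cref{assumption}.

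Next I would compute the relevant $k$-invariants of $M\Spin$. Since $H^*(K(\Z/2,0);\Z/2)\cong\mathcal{A}$ and $H^*(H\Z;\Z/2)\cong\mathcal{A}/\mathcal{A}\Sq^1$, the only degree-two Steenrod operation available is $\Sq^2$; hence the $k$-invariant linking $\pi_1$ to $\pi_2$ is $\Sq^2$, and the one linking $\pi_0\cong\Z$ to $\pi_1$ is $\Sq^2$ precomposed with reduction mod $2$ (the Bockstein $\Sq^1$ annihilates the integral fundamental class). Dualising these over $Y$ recovers, in the orientable case, Teichner's original differentials; the mod-$2$ reduction is precisely the coefficient reduction $H_p(Y;\Z^{w_1})\to H_p(Y;\Z/2)$ appearing in \eqref{prop:differentials-item-ii}.

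To obtain the full twisted formula I would pass through the Thom isomorphism $H^*(M\xi_Y;\Z/2)\cong H^{*}(B_Y;\Z/2)\cdot U$, under which Steenrod squares act on the Thom class by $\Sq^i U=w_i(\xi_Y)\,U$. Because $\xi_Y$ is pulled back along $(w_1,w_2)$ in \cref{defn:key-pullback}, we have $w_1(\xi_Y)=w_1$ and $w_2(\xi_Y)=w_2$, so the Cartan formula gives
\[
\Sq^2(xU)=\Sq^2(x)\,U+\Sq^1(x)\,w_1U+x\,w_2U=\bigl(\Sq^2(x)+w_1\Sq^1(x)+w_2 x\bigr)U=\Sq^2_{w_1,w_2}(x)\,U.
\]
Thus the ambient $\Sq^2$ governing the untwisted differentials becomes $\Sq^2_{w_1,w_2}$ on $H^*(Y;\Z/2)$, and dualising yields both stated differentials.

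The main obstacle will be the bookkeeping in the nonorientable, $w_1$-twisted setting rather than any single hard computation. One must carefully justify that the $d_2$ differential equals the dual of the $k$-invariant when the coefficients are twisted by the $\Z/2$-action, using the twisted Thom isomorphism together with the twisted-coefficient AHSS as constructed in \cref{remark:James-SS}; in particular one must check compatibility of this $\Z/2$-action with the reduction $\Z^{w_1}\to\Z/2$ in \eqref{prop:differentials-item-ii}. The degree restrictions $p\le 4$ in \eqref{prop:differentials-item-i} and $p\le 5$ in \eqref{prop:differentials-item-ii} are exactly what guarantee that only these primary $k$-invariants contribute, and that no higher Postnikov data of $M\Spin$ (nor secondary operations) enter the relevant differential; verifying the absence of such corrections in this range is the delicate part of the argument.
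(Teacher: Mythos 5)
Your proposal is correct and takes essentially the same route as the paper, whose proof adapts Teichner's and Orson--Powell's identification of the James $E^2$-page (for $q\le 2$) with an Atiyah--Hirzebruch spectral sequence for a Thom spectrum, the key adaptation being precisely your Cartan-formula computation $\Sq^2(xU)=\bigl(\Sq^2(x)+w_1\Sq^1(x)+w_2x\bigr)U$, with the $w_1$-twisted coefficients likewise handled via the twisted Thom isomorphism. One small point to repair in execution: your Steenrod-algebra argument only shows the relevant $k$-invariants of $M\Spin$ are $0$ or $\Sq^2$, so their nontriviality (standard, since $\Sq^2$ detects $\eta$) must still be invoked, as it implicitly is in the results the paper cites.
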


\begin{proof}
Again, the proof of \cite{teichner-signature}*{Proposition~1,~p.~750} was given in the orientable case. In \cite{teichner-star}*{Section~2} it was explained how to adapt to the nonorientable case, provided there is a vector bundle $E \to Y$ with $w_i(E) = w_i \in H^i(Y;\Z/2)$ for $i=1,2$.
We explain how to perform the necessary adaptations in the general case of $w_1 \neq 0$, without assuming the existence of such an~$E$.

More details on Teichner's proof of \cite{teichner-signature}*{Proposition~1} were given by Orson--Powell in \cite{Orson-Powell}*{Section~5}, so we explain how to perform the adaptation by referencing the Orson--Powell write up. In \cite{Orson-Powell}*{Lemma~5.3}, replace $\operatorname{BSCAT}$ with $\BO$, and $\Sq^2_{w_2(v)}$ with $\Sq^2_{w_1,w_2}$.  To justify the latter replacement, note that on the third line of the displayed equations in the proof of \cite{Orson-Powell}*{Lemma~5.3}, the term $U_n \cup \rho_n^*w_1(v_n) \cup \Sq^1(\rho_n^*(X))$ from the previous line is observed to be zero, because in that paper $w_1(v_n)=0$. In the general case $w_1(v_n) \neq 0$ and that term survives, so we are left with $\Sq^2_{w_1,w_2}$ as asserted.

Further, in the proof of \cite{Orson-Powell}*{Lemma~5.4}, for $0 \leq q \leq 2$, the terms $E^2_{p,q}$ of the James spectral sequence are identified with the $E^2_{p,q}$ terms in an Atiyah--Hirzebruch spectral sequence for a Thom space $H_p(M(v);\pi^{\mathrm{st}}_q)$, where $v \colon B \to \BO$ is a stable vector bundle over $B$  (corresponding to $B_Y$ in our setting).    In the case that $v$ is a nonorientable bundle, the Thom isomorphism theorem identifies this with $H_p(B;(\pi^{\mathrm{st}}_q)^{w_1})$, where the coefficients are now twisted by $w_1$ (recall that after identification of $\pi_q^{\mathrm{st}}$ with $\Omega_q^{\Spin}$, following our convention in \cref{assumption} we cease to  indicate the twisting in the notation). The rest of the proof in \cite{Orson-Powell}*{Lemma~5.4 and Corollary~5.7} proceeds as given in that paper, with straightforward modifications, e.g.\ as already noted, insert $\BO$ in place of $\operatorname{BSCAT}$, and $\Sq^2_{w_1,w_2}$ in place of $\Sq^2_{w_2(v)}$, and then adapt the displayed computation at the end of the proof of \cite{Orson-Powell}*{Corollary~5.7} to include the term $w_1(v) \cup \Sq^1(f^*(u))$ on the second line, and subsequent terms arising from that one.
\end{proof}

The case $p=4$ of \eqref{prop:differentials-item-i} will be used in \cref{section:proof-thm-A}, and the cases $p=4,5$ of \eqref{prop:differentials-item-ii} will be used in \cref{sec:proof-thm-B}.
For our Steenrod square computations in the sequel we will apply the rule that $\Sq^n(x) = x^2$ if $x$ is degree $n$, and the Cartan formula $\Sq^1(x_1x_2)=\Sq^1(x_1)x_2+x_1\Sq^1(x_2)$, without further comment.

\section{Modified surgery applied to stable exotica}\label{sec:applications-modified}

The next proposition explains our strategy for the proof of \cref{thm:main} cf.~ \cite{Kreck84} and \cite{teichner-phd}*{Corollary~5.1.3}.

\begin{proposition}\label{prop:strategy}
	Let $(B,\xi)$ be a normal 1-type for a closed, nonorientable, smooth $4$-manifold with spin universal cover.
	\begin{enumerate}[(i)]
		\item\label{item:i} The $K3$ manifold determines a well-defined element of $\Omega_4(\xi)$.
		\item\label{item:ii} If $[K3]=0\in\Omega_4(\xi)$, then the forgetful map $\Omega_4(\xi) \to \Omega_4(\xi^{\Top})$ is injective. In this case, stable homeomorphism implies stable diffeomorphism for $4$-manifolds with normal 1-type $(B,\xi)$.
		\item\label{item:iii} If $[K3] \neq 0 \in \Omega_4(\xi)$, then $\Omega_4(\xi) \to \Omega_4(\xi^{\Top})$ is not injective.  Let $D$ be a null-bordant $4$-manifold with normal 1-type $(B,\xi)$. Then $D \# K3$ and $D \# 11(S^2 \times S^2)$ are homeomorphic but not stably diffeomorphic.
	\end{enumerate}
\end{proposition}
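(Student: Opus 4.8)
The plan is to compare the two James spectral sequences of \cref{remark:James-SS} converging to $\Omega_4(\xi)$ and $\Omega_4(\xi^{\Top})$ along the forgetful map $\phi\colon\Omega_4(\xi)\to\Omega_4(\xi^{\Top})$, and to translate between bordism and stable (diffeo/homeo)morphism via \cref{thm:kreck-thm-C-smooth,thm:kreck-thm-C-top}. For \eqref{item:i}, I would first observe that $K3$ is simply connected and spin, so $w_1(\nu_{K3})=w_2(\nu_{K3})=0$ and its stable normal bundle lifts through $\BSpin$; since every map $K3\to B\pi$ is null-homotopic, the pullback description of $B=B_\pi$ in \cref{defn:key-pullback} shows that a normal $1$-smoothing of $K3$ amounts to a null-homotopy in $K(\Z/2,1)\times K(\Z/2,2)$, with ambiguity measured by $H^0(K3;\Z/2)\oplus H^1(K3;\Z/2)\cong\Z/2$. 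To see the bordism class is nevertheless well defined, I would note that any such smoothing lands in the bottom filtration $F_0\cong E^\infty_{0,4}$, a quotient of $E^2_{0,4}=H_0(\pi;\Omega_4^{\Spin})\cong\Z/2$; both lifts send the generator of $\Omega_4^{\Spin}=\Z$ to its reduction, and the sign distinguishing them (coming from the $w_1$-twisting) is invisible modulo $2$. Hence $[K3]\in F_0\subseteq\Omega_4(\xi)$ is well defined.

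The engine for \eqref{item:ii} and \eqref{item:iii} is the coefficient comparison $\Omega_q^{\Spin}\to\Omega_q^{\TopSpin}$: this is an isomorphism for $q\le 3$, while for $q=4$ it is $\Z\xrightarrow{\cdot 2}\Z$, because $\Omega_4^{\TopSpin}\cong\Z$ is generated by Freedman's $E_8$-manifold (signature $8$) whereas $[K3]$ has signature $-16$. Taking $w_1$-twisted coinvariants $H_0(\pi;-)$, at the spot $(0,4)$ this becomes the zero map $\Z/2\xrightarrow{0}\Z/2$. Thus $\phi$ induces an isomorphism on $E^2_{p,q}$ for all $q\le 3$ and is zero on $E^2_{0,4}$. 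For \eqref{item:ii} I would prove $\ker\phi=\langle[K3]\rangle$. Since $\phi$ preserves the filtration, it is enough to show the induced maps $\bar\phi_p\colon E^\infty_{p,4-p}\to E^{\infty,\Top}_{p,4-p}$ are injective for all $p$. For $p\ge 1$ one has $4-p\le 3$, and a page-by-page check shows each $\bar\phi_p$ is in fact an isomorphism: every nonzero differential into or out of a total-degree-$4$ entry $(p,4-p)$ with $p\ge 1$ connects it only to entries with $q\le 3$, where $\phi$ is an isomorphism, while the differentials that do involve the exceptional spot $(0,4)$ all emanate from total degree $5$ and therefore only affect the associated graded of $\Omega_5$. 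Hence if $[K3]=0$ then $E^\infty_{0,4}=0$ and $\phi$ is injective. Given stably homeomorphic $M,M'$ with this normal $1$-type, \cref{thm:kreck-thm-C-top} then yields equal Euler characteristics and $\phi([M])=\phi([M'])$; injectivity gives $[M]=[M']$ in $\Omega_4(\xi)$, and \cref{thm:kreck-thm-C-smooth} upgrades this to a stable diffeomorphism.

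For \eqref{item:iii}, the image of $[K3]$ in $E^\infty_{0,4}$ is $\bar\phi_0([K3])=0$, and since $[K3]\in F_0$ this forces $\phi([K3])=0$; as $[K3]\ne0$ by hypothesis, $\ker\phi\ne0$ and $\phi$ is not injective. For the explicit pair, with $D$ null-bordant I would compute $[D\#K3]=[D]+[K3]=[K3]\ne0$ and $[D\#11(S^2\times S^2)]=0$ in $\Omega_4(\xi)$; as the Euler characteristics agree (both equal $\chi(D)+22$), \cref{thm:kreck-thm-C-smooth} shows these two manifolds are not stably diffeomorphic. The homeomorphism is the geometric input: writing $P$ for the topological $E_8$-manifold and $\bar P$ for its orientation-reversal, Freedman's classification of simply connected topological $4$-manifolds gives $K3\cong_{\Top} \bar P\#\bar P\#3(S^2\times S^2)$ and $\bar P\# P\cong_{\Top} 8(S^2\times S^2)$ (both pairs have equal intersection forms and Kirby--Siebenmann invariants). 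Since $D$ is nonorientable, sliding a summand around an orientation-reversing loop gives $D\#\bar P\cong_{\Top} D\# P$, and combining these identifications yields $D\#K3\cong_{\Top} D\#\bar P\# P\#3(S^2\times S^2)\cong_{\Top} D\#11(S^2\times S^2)$.

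The hard part is the injectivity statement in \eqref{item:ii}: one must confirm that the failure of the coefficient map at $(0,4)$ does not leak, through some surviving differential, into the total-degree-$4$ line, so that $\ker\phi$ is exactly $\langle[K3]\rangle$. This is where the spectral sequence bookkeeping is delicate, and I would carry it out by tracking the relevant $d_r$ on pages $r=2,3,4$ explicitly. A secondary point requiring care is the dependence of the class $[M]\in\Omega_4(\xi)$ on the choice of normal $1$-smoothing when moving between \cref{thm:kreck-thm-C-smooth,thm:kreck-thm-C-top}; this is the routine compatibility of the smooth and topological classifications under the forgetful functor, handled by fixing smooth smoothings first and forgetting them to the topological category.
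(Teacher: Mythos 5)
Your overall route coincides with the paper's: identify $[K3]$ with the generator of $E^\infty_{0,4}$, which is well defined because that group is a quotient of $H_0(\pi;\Omega_4^{\Spin})\cong\Z/2$ and hence $[K3]=-[K3]$; compare the smooth and topological James spectral sequences using that $\Omega_q^{\Spin}\to\Omega_q^{\TopSpin}$ is an isomorphism for $q\le 3$ while the $w_1$-twisted coinvariants map at $(0,4)$ is zero (the paper's $1\otimes 16\mapsto 1\otimes 8+1\otimes(-8)=0$ computation); and obtain the homeomorphism in \eqref{item:iii} from Freedman's classification together with sliding an $E_8$ summand around an orientation-reversing loop in $D$. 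Two points, one of which is a genuine gap.

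The gap is in the non-stable-diffeomorphism conclusion of \eqref{item:iii}. \cref{thm:kreck-thm-C-smooth} characterises stable diffeomorphism by the \emph{existence} of a pair of bordant normal $1$-smoothings, so computing $[D\#K3]=[K3]\neq 0$ and $[D\#11(S^2\times S^2)]=0$ for one implicit choice of smoothings does not yet rule out stable diffeomorphism: a priori a different normal $1$-smoothing of $D\#K3$ could represent $0\in\Omega_4(\xi)$. The paper closes this by noting that changing the normal $1$-smoothing changes the bordism class by the action of the group of fibre homotopy automorphisms of $\xi$ on $\Omega_4(\xi)$, and that this action fixes the trivial element; hence every smoothing of $D\#11(S^2\times S^2)$ represents $0$ and no smoothing of $D\#K3$ does. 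Your closing remark about smoothing-dependence addresses only the smooth-versus-topological compatibility in \eqref{item:ii}, not this quantifier issue; and even there, ``fixing smooth smoothings first and forgetting them'' is not quite the right fix, since \cref{thm:kreck-thm-C-top} only provides \emph{some} pair of bordant topological smoothings, which need not be the forgetful images of your chosen smooth ones — what one needs is that the forgetful map on sets of normal $1$-smoothings is surjective (both sets being governed by the same lifting data through $B\pi$, as $P_2(\BO)\simeq P_2(\BTop)$). The paper glosses this last point equally briefly, so it is not a defect relative to the paper's own standard.

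Separately, your justification that $\bar\phi_p$ is an isomorphism for $p\ge 1$ contains a false parenthetical: the differentials $d_r\colon E^r_{r,5-r}\to E^r_{0,4}$ do \emph{not} ``only affect the associated graded of $\Omega_5$'' — they kill part of $E_{0,4}$ and hence change the associated graded of $\Omega_4$ in filtration $0$. Indeed this is precisely the mechanism by which $[K3]$ can vanish, so read literally your sentence would make the hypothesis of \eqref{item:ii} vacuous and contradict the entire programme of the paper. The correct and sufficient point, which your first clause already contains, is that these differentials never touch the entries $(p,4-p)$ with $p\ge 1$: every differential into or out of such an entry, and every entry in the resulting inductive cascade, has $q\le 3$, where the $E^2$-comparison is an isomorphism, and never has $(0,4)$ as source or target (here $\Omega_3^{\Spin}=0$ kills the spot $(1,3)$, and the failure of surjectivity of the comparison at $(3,2)$ on page $4$, forced when the smooth $d_3$ into $(0,4)$ is nonzero, feeds only into $(0,4)$ itself). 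With that repair your page-by-page check goes through and recovers the paper's terser assertion that the kernel of the forgetful map is exactly generated by $[K3]$.
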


\begin{remark}\leavevmode
\begin{enumerate}[(i)]
                \item   Note that a null-bordant $4$-manifold $D$ can be directly constructed. Take a 2-complex with fundamental group, and thicken to a $5$-manifold. Thicken the 1-skeleton to realise the desired $w_1$, and thicken the 2-skeleton so as to realise the desired $w_2$. Then take the boundary.
                \item Although it will not be required for the next proof, for definiteness we establish a preferred orientation for $K3$, namely the one for which the signature of the intersection form is $+16$.
              \end{enumerate}
\end{remark}

\begin{proof}
	First we prove \eqref{item:i}.  Let $(\pi,w_1,w_2)$ be the triple determining $(B,\xi)$. Take a model for $B\pi$ such that $(w_1,w_2) \colon B\pi \to K(\Z/2,1) \times K(\Z/2,2)$ is a fibration.  Then by \cref{lemma:def-key-pullback-gives-correct-1-type}, the normal 1-type $\xi \colon B \to \BO$ is fibre homotopy equivalent to the pullback
	\begin{equation}\label{eqn:pullback-defining-B-xi}
		\begin{tikzcd}[column sep = large]
			B_{\pi} \ar[r] \ar[d,"\xi_{\pi}"] & B\pi \ar[d,"{(w_1,w_2)}"] \\ \BO \ar[r,"P_2(-)"] & K(\Z/2,1) \times K(\Z/2,2)
		\end{tikzcd}
	\end{equation}
	of $(w_1,w_2)$ along $\BO\to P_2(\BO)\simeq K(\Z/2,1)\times K(\Z/2,2)$ from \cref{defn:key-pullback}.
We have $\Omega_4^{\Spin}\cong 16 \Z$ and $\Omega_4^{\TopSpin}\cong 8 \Z$ via the signature. Since we assume that $w_1$ is nontrivial, in the smooth and topological James spectral sequences we have
\[E^2_{0,4} \cong H_0(\pi;\Omega_4^{\Spin})\cong \Z\otimes_{\Z\pi}16\Z^{w_1}\cong \Z/2 \text{ and  } E^{2,\Top}_{0,4} \cong H_0(\pi;\Omega_4^{\TopSpin})\cong \Z\otimes_{\Z\pi}8\Z^{w_1}\cong \Z/2.\]
These groups are generated by $K3$ and $E_8$, respectively.

Since $K3$ is simply-connected, every map $K3\to B\pi$ is null homotopic and a choice of $\xi$-structure is the same as the choice of a spin structure.
Since $K3$ has a unique spin structure for each choice of orientation, it remains to show that the choice of orientation does not affect the element. For this we use that $[K3]$ lies in the subgroup $E^{\infty}_{0,4} \subseteq \Omega_4(\xi)$, which, as a quotient of $E^2_{0,4} \cong \Z/2$, has order at most two.  Thus $[K3] = -[K3]$ and so $[K3]$ represents a unique element of $\Omega_4(\xi)$, as claimed.  This proves~\eqref{item:i}.
	
	For the proof of \eqref{item:ii} and \eqref{item:iii} we first show that $[K3]$ generates the kernel of $\Omega_4(\xi)\to\Omega_4(\xi^{\Top})$.
	The map $\Omega_i^{\Spin}\to \Omega_i^{\TopSpin}$ is an isomorphism for $0\leq i\leq 3$, and we saw above that forgetful map $\Omega_4^{\Spin} \to \Omega_4^{\TopSpin}$ is identified, via the signature, with the inclusion $16\Z \to 8\Z$.
	It follows that the map $H_0(\pi;\Omega_4^{\Spin}) \cong \Z/2 \to H_0(\pi;\Omega_4^{\TopSpin}) \cong \Z/2$ is trivial. To see this, let $g \in \pi$ be such that $w_1(g) =-1$. Then
	\[1 \otimes 16 \mapsto 1 \otimes 8 + 1 \otimes 8 = 1 \cdot e \otimes 8 + 1 \cdot g \otimes 8 = 1 \otimes 8 + 1 \otimes (-8) =0.\]
	Thus we see that the kernel of $\Omega_4(\xi)\to\Omega_4(\xi^{\Top})$ is generated by $[K3]\in E^{\infty}_{0,4} \subseteq \Omega_4(\xi)$.
	It follows that $[K3] =0 \in \Omega_4(\xi)$ if and only if the forgetful map is injective.  This shows the first parts of both~\eqref{item:ii} and~\eqref{item:iii}.
	The second part of \eqref{item:ii} now follows directly from \cref{thm:kreck-thm-C-smooth,thm:kreck-thm-C-top}.
	
	For the second part of \eqref{item:iii}, first note that $[D \# K3]$ is bordant to $[K3]$, which by \eqref{item:i} is a uniquely determined element, and thus $[D \# K3]$ represents a nontrivial element of $\Omega_4(\xi)$. On the other hand $[D \# 11(S^2 \times S^2)]=0 \in \Omega_4(\xi)$. The choice of normal 1-smoothings correspond to an action of the group of homotopy automorphisms of $\xi$ on $\Omega_4(\xi)$. It is easy to see that this action preserves the trivial element of $\Omega_4(\xi)$.  Hence there are no choices of normal 1-smoothings for which $[D \# K3]$ and $[D \# 11(S^2 \times S^2)]$ are bordant in $\Omega_4(\xi)$, and hence they are not stably diffeomorphic by \cref{thm:kreck-thm-C-smooth}.
	
	Since $K3$ is homeomorphic to $E_8\#E_8\#3(S^2\times S^2)$ by Freedman's classification of simply-connected $4$-manifolds~\cite{Freedman-82} and $D$ is nonorientable, we have homeomorphisms
	\[D \# K3\cong D\#E_8\#E_8\#3(S^2\times S^2)\cong D\#E_8\#\overline{E}_8\#3(S^2\times S^2)\cong D \# 11(S^2 \times S^2)\]
	as claimed.
\end{proof}

It follows from the proof that whether \eqref{item:ii} or \eqref{item:iii} occurs is governed by whether, for some $k \geq 2$, there is a nontrivial differential $d_k \colon E^k_{k,5-k} \to E^k_{0,4}$. This can only potentially happen for $k \in \{3,4,5\}$ because $\Omega_3^{\Spin}=0$ and the James spectral sequence is first-quadrant. If there is such a nontrivial differential, $[K3]=0 \in \Omega_4(\xi)$ and there are no stably exotic 4-manifolds with normal 1-type $(B,\xi)$. On the other hand if all the differentials $d_k$ with codomain $E^k_{4,0}$ are trivial, then $[K3] \neq 0 \in \Omega_4^(\xi)$ and there is a stably exotic pair of 4-manifolds with normal 1-type $(B,\xi)$.

We can now sketch the proof of \cite{Kreck84}*{Theorem~1}. In Kreck's statement, $w_2$ is not explicitly mentioned, but the statement below can be deduced  from Kreck's proof.
In any case we give the proof of this result because we will use it in \cref{section:proof-thm-A}.

\begin{theorem}[Kreck]
	\label{thm:kreck}
	Let $\pi$ be a group and let $0 \neq w_1\in H^1(\pi;\Z/2)$ and $w_2\in H^2(\pi;\Z/2)$ be given. If $w_1^2=w_2$, then there exist $4$-manifolds $M$ and $M'$ with normal 1-type $(\pi,w_1,w_2)$ that are  homeomorphic but not stably diffeomorphic.
\end{theorem}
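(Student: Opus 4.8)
The plan is to reduce everything to \cref{prop:strategy}~\eqref{item:iii}: it suffices to show that the normal 1-type $\xi = \xi_\pi$ determined by $(\pi,w_1,w_2)$ satisfies $[K3]\neq 0\in\Omega_4(\xi)$, for then the manifolds $D\#K3$ and $D\#11(S^2\times S^2)$ furnish the desired homeomorphic but not stably diffeomorphic pair. The governing idea is that the hypothesis $w_2=w_1^2$ forces the stable normal bundle of every normal 1-smoothing to carry a $\Pin^-$ structure, equivalently a tangential $\Pin^+$ structure, and that $\Pin^+$ bordism detects $[K3]$ whereas the ambient group $\Omega_4(\xi)$ a priori might not.

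First I would observe that, in the notation of \eqref{eqn:pullback-defining-B-xi}, the universal bundle $\xi\colon B_\pi\to\BO$ has $w_1(\xi)=w_1$ and $w_2(\xi)=w_2=w_1^2$, so that $w_2(\xi)+w_1(\xi)^2 = 2w_1^2 = 0\in H^2(B_\pi;\Z/2)$. By the characterisation recalled at the start of \cref{section:pin-structures}, this vanishing is exactly the obstruction to lifting $\xi$ along $\BPin^-\to\BO$, so a lift $B_\pi\to\BPin^-$ exists. Composing with a normal 1-smoothing $M\to B_\pi$ then equips the stable normal bundle of any $\xi$-manifold $M$ (and, since the same applies to bordisms, any $\xi$-cobordism $W^5$) with a $\Pin^-$ structure, and hence, by \cref{prop:pin-structures}, with a tangential $\Pin^+$ structure.

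Next I would use this lift to produce a natural homomorphism of bordism groups $\Omega_4(\xi)\to\Omega_4^{\Pin^+}$, sending a $\xi$-manifold to itself equipped with the tangential $\Pin^+$ structure just constructed. Tracing through the identifications, the class $[K3]\in\Omega_4(\xi)$ — which, as in the proof of \cref{prop:strategy}~\eqref{item:i}, is represented by $K3$ with the $\xi$-structure coming from its unique spin structure — maps to $K3$ with its spin-induced $\Pin^+$ structure. By \cref{prop:pin-4-bordism-groups} this is the order-two element $8\in\Z/16\cong\Omega_4^{\Pin^+}$, in particular nonzero. Therefore $[K3]\neq 0\in\Omega_4(\xi)$, and \cref{prop:strategy}~\eqref{item:iii} completes the argument.

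The hard part will be the careful construction of the bordism map $\Omega_4(\xi)\to\Omega_4^{\Pin^+}$: \cref{prop:pin-structures} is phrased only as an existence statement about characteristic classes, whereas here I need the underlying passage between normal $\Pin^-$ structures and tangential $\Pin^+$ structures to be natural enough to induce a well-defined homomorphism of bordism groups, and I must confirm that the $\Pin^+$ structure this correspondence assigns to $K3$ is the spin-induced one detecting the order-two class, rather than a bordism-distinct structure. Verifying this compatibility — in essence, that all the structures in play descend from the single spin structure on $K3$ — is the delicate point; once it is in place, the numerical input $8\neq 0\in\Z/16$ from \cref{prop:pin-4-bordism-groups} finishes the proof.
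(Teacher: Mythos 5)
Your proposal is correct, but it takes a genuinely different route from the paper's proof. The paper first reduces to $\pi=\Z/2$: since $w_2=w_1^2$ holds on both sides, the map $w_1\colon \pi\to\Z/2$ induces a map of normal $1$-types and hence a homomorphism $\Omega_4(\xi_\pi)\to\Omega_4(\xi_{\Z/2})$ sending $[K3]\mapsto[K3]$; it then shows $[K3]\neq 0\in\Omega_4(\xi_{\Z/2})$ by an order-counting argument in the James spectral sequence, namely that the $E^2$ terms on the $p+q=4$ line are each $\Z/2$ or zero, so since $\Omega_4(\xi_{\Z/2})\cong\Omega_4^{\Pin^+}\cong\Z/16$ has order $16$, no differential can be nontrivial and $E^\infty_{0,4}\cong\Z/2$ is generated by $[K3]$. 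You instead lift $\xi_\pi$ itself along $\BPin^-\to\BO$ (legitimate, as $w_2(\xi_\pi)+w_1(\xi_\pi)^2=0$) and detect $[K3]$ directly under the induced homomorphism $\Omega_4(\xi_\pi)\to\Omega_4^{\Pin^+}\cong\Z/16$, quoting $[K3]=8$ from \cref{prop:pin-4-bordism-groups}. Your route is uniform in $\pi$ and avoids the spectral sequence entirely, at the cost of needing the normal-$\Pin^-$/tangential-$\Pin^+$ translation at the level of bordism groups; note the paper itself invokes exactly this translation when it identifies $\Omega_4(\xi_{\RP^\infty})\cong\Omega_4^{\Pin^-}$ in the proof of \cref{lem:nontrivial-X-diff}, so it is squarely within the available toolkit. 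Moreover, the "delicate point" you flag essentially dissolves: since $H^1(K3;\Z/2)=0$, a $\Pin^+$ structure on $K3$ is unique up to homotopy, so whichever structure the correspondence assigns must be the spin-induced one representing $8\in\Z/16$; similarly, the restriction of any chosen lift $B_\pi\to\BPin^-$ to the fibre $\BSpin$ of $\xi_\pi$ is automatically the canonical map $\BSpin\to\BPin^-$, because lifts of $\BSpin\to\BO$ along the $K(\Z/2,1)$-fibration $\BPin^-\to\BO$ form a torsor over $H^1(\BSpin;\Z/2)=0$; in particular the image of $[K3]$ is independent of your choice of lift. The trade-off, in summary: the paper's order-counting argument never needs to know which element of $\Z/16$ the class $[K3]$ represents, only the orders of groups, while your argument consumes the sharper input $[K3]=8$ — but that input is already recalled in \cref{section:pin-structures}, so both proofs rest on the same external computations, and both conclude via \cref{prop:strategy}~(iii).
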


\begin{proof}
	Let $\xi_{\pi} \colon B_{\pi} \to \BO$ be the fibration from \cref{defn:key-pullback}. We consider the bordism group $\Omega_4(\xi_{\pi})$ and the associated James spectral sequence.	
	We show that $E^\infty_{0,4}=\Z/2$.
	\begin{claim}
		If $[K3] \neq 0 \in E^{\infty}_{0,4} \subseteq \Omega_4(\xi_{\Z/2})$ then $[K3] \neq 0 \in E^{\infty}_{0,4} \subseteq \Omega_4(\xi_{\pi})$.
	\end{claim}
	To see this claim, note that $w_1$ determines a map $w_1 \colon \pi \to \Z/2$. Take $w_i \neq 0 \in H^i(B\Z/2;\Z/2)$ for $i=1,2$, and define $\xi_{\Z/2} \colon B_{\Z/2} \to \BO$ as in \cref{defn:key-pullback}. Then since $w_1^2=w_2$ for both $\Z/2$ and $\pi$, it follows that $w_1 \colon \pi \to \Z/2$ determines a map of fibrations $(B_{\pi},\xi_{\pi})$ to $(B_{\Z/2},\xi_{\Z/2})$, and hence determines a map of  bordism groups $\Omega_4(\xi_{\pi}) \to \Omega_4(\xi_{\Z/2})$. By naturality of the James spectral sequence there is an associated homomorphism between the $E^{\infty}_{0,4}$ terms. \cref{prop:strategy}~\eqref{item:i} implies that $[K3]$ determines a well-defined class in both bordism classes. In addition we saw in the proof of \cref{prop:strategy} that $[K3]$ generates both (at this point, potentially trivial) $E^{\infty}_{0,4}$ terms, and $\Omega_4(\xi_{\pi}) \to \Omega_4(\xi_{\Z/2})$ sends $[K3] \mapsto [K3]$. Thus if $[K3]$ is nonzero in $E^{\infty}_{0,4} \subseteq \Omega_4(\xi_{\Z/2})$, then certainly $[K3]$ is nonzero in  $E^{\infty}_{0,4} \subseteq \Omega_4(\xi_{\pi})$. This completes the proof of the claim.
	
	Thus we assume that $\pi = \Z/2$, and we show that $[K3] \neq 0 \in \Omega_4(\xi_{\Z/2})$.
	Since $\Omega_0^{\Spin} \cong \Z$, $\Omega_1^{\Spin} = \Omega_2^{\Spin}=\Z/2$, and $\Omega_3^{\Spin}=0$, we have for $E^2_{p,q} \cong H_p(B\Z/2;\Omega_q^{\Spin})$ that
	\[E^2_{0,4}=E^2_{2,2}=E^2_{3,1}=E^2_{4,0}=\Z/2 \text{ and } E^2_{1,3}=0.\]
	On the other hand, $\Omega_4(\xi_{\Z/2})=\Omega_4^{\Pin^+}\cong\Z/16$ by \citelist{\cite{Giambalvo}\cite{Kreck84}*{Proposition~2}} and as we recalled in \cref{prop:pin-4-bordism-groups}.
	Hence for order reasons $E^\infty_{0,4}$   must equal $\Z/2$ (as must $E^\infty_{2,2}$, $E^\infty_{3,1}$, and $E^\infty_{4,0}$).  Since $[K3]$ generates $E^{\infty}_{0,4}$, it follows that $[K3] \neq 0 \in \Omega_4(\xi_{\Z/2})$.
	
	By the claim, $[K3] \neq 0 \in \Omega_4(\xi_{\pi})$. Then \cref{prop:strategy}~\eqref{item:iii} completes the proof of \cref{thm:kreck}.
\end{proof}

We close this section by giving the proof of \cref{thm:C}. 

\begin{proof}[Proof of \cref{thm:C}]
		Let $K$ be a presentation $2$-complex for a presentation of~$G$ realising $\defi(G)$. Then the Euler characteristic of $K$ is $1-\defi(G)$. By Wall \cite{thickenings}*{Proposition~5.1}, there exists a $5$-dimensional thickening $W$ of $K$ with $w_i(\nu W)=w_i$ for $i=1,2$. Let $N:=\partial W$. Then $N$ has normal $1$-type $(G,w_1,w_2)$. Since $W \cup_N W$ is odd-dimensional, $0= \chi(W \cup_N W) = 2\chi(W) -\chi(N)$, and hence $N$ has Euler characteristic $\chi(N) = 2(1-\defi(G))= 2-2\defi(G)$. 
	Let $M:=N\#(S^2\times S^2)$, which thus has Euler characteristic $4-2\defi(G)$. 

To construct $M'$, we will use a manifold $A$ constructed by Akbulut in \cite{akbulut} that is homeomorphic but not stably diffeomorphic to $(S^1\wt\times S^3)\#(S^2\times S^2)$. The bordism group of the normal 1-type of $A$ is $\Z/2$, since $E^2_{p,q} = H_p(S^1;\Omega_q^{\Spin})=0$ for $(p,q) = (1,3)$ and $p>1$.  Since $A$ is not stably diffeomorphic to $(S^1\wt\times S^3)\#(S^2\times S^2)$, which represents the trivial element, $A$ must be stably diffeomorphic to $(S^1\wt\times S^3)\#K3$. 
  Choose an element $g$ of $G$ with $w_1(g)=-1$ and consider~$A$ as an element of $\Omega_4(\xi(G,w_1,w_2))$ by sending a generator of $\pi_1(A)\cong \Z$ to $g$. Perform a 1-surgery on $N\#A$ that identifies $g$ with the generator of $\pi_1(A)$, to obtain the manifold $M'$ whose fundamental group is again $G$.  Since $A$ has Euler characteristic $2$, $N\#A$ has Euler characteristic $2-2\defi(G)$. The surgery changes the Euler characteristic by $\chi(D^2 \times S^2) = 2$, so that $M'$ has Euler characteristic $4-2\defi(G)$ as claimed. This shows \eqref{it:smallex1}.

	To show \eqref{it:smallex2}, start with $N\# (S^1\wt\times S^3)\#(S^2\times S^2)$ and perform a surgery that identifies $g$ with the generator of $\pi_1(S^1)$. This returns $N\#(S^2\times S^2)=M$. Since $A$ and  $(S^1\wt\times S^3)\#(S^2\times S^2)$ are homeomorphic, it follows that $M$ and $M'$ are homeomorphic. Since $A$ is stably diffeomorphic to $(S^1\wt\times S^3)\#K3$, it similarly follows that $M'$ is stably diffeomorphic to $N \# K3$.
	By \cref{prop:strategy}~\eqref{item:iii}, the manifolds $N\#K3$ and $N\#11(S^2\times S^2)$ are not stably diffeomorphic and hence $M$ and $M'$ are not stably diffeomorphic. This completes the proof of \eqref{it:smallex2}.
	
	Items \eqref{it:smallex4}--\eqref{it:smallex5} follow from similar properties of $A$; that is, $A$ is obtained from $(S^1\wt\times S^3)\#(S^2\times S^2)$ by a Gluck twist \cite{akbulut} and the orientation double covers of $A$ and $(S^1\wt\times S^3)\#(S^2\times S^2)$ are diffeomorphic \cite{Torres-JKTR}. See \cite{torres}*{Section~3.4} for a more detailed argument.
\end{proof}

\section{Stable homeomorphism implies stable diffeomorphism when \texorpdfstring{$w_1^3\neq w_1w_2$}{w1 cubed is not equal to w1w2}}\label{section:proof-thm-A}
For this section, we fix a normal $1$-type $(\pi,w_1,w_2)$. Note that \cref{assumption} remains in force, so bordism group coefficients are always twisted using~$w_1$.

We will deduce \cref{thm:main} from the following result, which we will prove at the send of this section.
\begin{theorem}
	\label{thm:d3-diff}
	The composition
	\[H_3(\pi;\Omega_2^{\Spin})\twoheadrightarrow H_3(\pi;\Omega_2^{\Spin})/\im d_2\xrightarrow{d_3^{\pi}} H_0(\pi;\Omega_4^{\Spin})\cong \Z/2\] is dual to the map \[\Hom(H_0(\pi;\Omega_4^{\Spin}),\Z/2) \to \Hom(H_3(\pi;\Omega_2^{\Spin}),\Z/2) \cong H^3(\pi;\Z/2)\] given by $1\mapsto w_1^3+w_1w_2\in H^3(\pi;\Z/2)$. In particular, $d_3^\pi$ is trivial if and only if $w_1^3=w_1w_2$.
\end{theorem}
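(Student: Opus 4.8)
The plan is to recognise the dual of $d_3^\pi$ as a natural degree-three characteristic class of the triple $(\pi,w_1,w_2)$, to reduce its computation to the universal example by naturality, and to fix the single remaining coefficient using the vanishing of $\Omega_4^{\Pin^-}$. Write $\theta_\pi \in H^3(\pi;\Z/2) \cong \Hom(H_3(\pi;\Z/2),\Z/2)$ for the dual of the composite in the statement. Since that composite factors through $E^3_{3,2} = H_3(\pi;\Z/2)/\im d_2$, the class $\theta_\pi$ must annihilate the image of $d_2 \colon E^2_{5,1} \to E^2_{3,2}$; by \cref{prop:differentials} this differential is dual to $\Sq^2_{w_1,w_2}\colon H^3(\pi;\Z/2) \to H^5(\pi;\Z/2)$, so the constraint is precisely $\Sq^2_{w_1,w_2}(\theta_\pi)=0$.

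The next step is naturality. Let $g=(w_1,w_2)\colon B\pi \to Y_0 := K(\Z/2,1)\times K(\Z/2,2)$, regarded as a space with tautological fibration as in \cref{defn:key-pullback}. The map $g$ induces a morphism of James spectral sequences that is $g_*$ on the $E_{3,2}$-line and, on the $E_{0,4}$-line, an isomorphism $\Z/2 \to \Z/2$ sending $[K3]$ to $[K3]$ (cf.\ \cref{prop:strategy}). As $E^2_{2,3}=H_2(-;\Omega_3^{\Spin})=0$ on both sides, nothing maps into position $(0,4)$ on the $E^2$-page, so this comparison survives to $E^3$ and commutes with $d_3$. Dualising yields $\theta_\pi = g^*\theta_{Y_0}$, reducing the problem to the universal class $\theta_{Y_0}\in H^3(Y_0;\Z/2)$.

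I would then determine $\theta_{Y_0}$. The group $H^3(Y_0;\Z/2)$ has basis $\{\iota_1^3,\ \iota_1\iota_2,\ \Sq^1\iota_2\}$, and the constraint $\Sq^2_{w_1,w_2}(\theta_{Y_0})=0$ applies here as well. A direct computation gives $\Sq^2_{w_1,w_2}(\iota_1^3)=\Sq^2_{w_1,w_2}(\iota_1\iota_2)=\iota_1^3\iota_2$ and $\Sq^2_{w_1,w_2}(\Sq^1\iota_2)=\Sq^2\Sq^1\iota_2+\iota_2\Sq^1\iota_2$. Since $\iota_1^3\iota_2$, $\Sq^2\Sq^1\iota_2$ and $\iota_2\Sq^1\iota_2$ are linearly independent in $H^5(Y_0;\Z/2)$, the kernel condition forces $\theta_{Y_0}=a(\iota_1^3+\iota_1\iota_2)$ for a single unknown $a\in\Z/2$.

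Finally I would pin down $a$ by specialising to $\pi=\Z/2$ with $w_1\neq 0$ and $w_2=0$, so that $g^*(\iota_1^3+\iota_1\iota_2)=w_1^3\neq 0$. By \cref{prop:pin-structures} this normal $1$-type is that of tangential $\Pin^-$ manifolds (the map to $B\Z/2$ being determined by $w_1$), so its bordism group is $\Omega_4^{\Pin^-}=0$ by \cref{prop:pin-4-bordism-groups}; thus $E^\infty_{0,4}=0$ and some differential must kill $[K3]$. The decisive point, which I expect to be the main obstacle, is to exclude the competing higher differentials: using \cref{prop:differentials} one checks that the relevant $d_2$ out of $(4,1)$ is an isomorphism, so $E^3_{4,1}=0$, and that $E^2_{5,0}=H_5(\Z/2;\Z^{w_1})=0$ since it is odd-degree twisted homology. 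Hence the only differential that can reach $(0,4)$ is $d_3$, which must therefore be nontrivial, giving $a=1$ and $\theta_\pi=w_1^3+w_1w_2$. This is nonzero exactly when $w_1^3\neq w_1w_2$, as claimed; the genuine difficulty throughout is that $d_3$ is a secondary operation with no primary formula, which is precisely why it is pinned down indirectly, via the $\im d_2$ constraint together with this single $\Pin^-$ input.
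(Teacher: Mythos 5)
Your proposal is correct, and its overall skeleton matches the paper's: reduce to the universal example $Y_0 = K(\Z/2,1)\times K(\Z/2,2)$ by naturality of the James spectral sequence, and use the vanishing of $\Omega_4^{\Pin^-}$ (\cref{prop:pin-4-bordism-groups}), applied to $\RP^\infty$ with $w_2=0$ together with the exclusion of the competing $d_4$ and $d_5$ differentials there, to force nontriviality --- this last step is exactly the paper's \cref{lem:nontrivial-X-diff}. Where you genuinely diverge is in the middle step, the determination of \emph{which} class in $H^3(Y_0;\Z/2)$ the differential is dual to. The paper (\cref{lem:d3-X}) uses two further bordism comparisons: restriction to the factor $K(\Z/2,2)$ (where the target $H_0(K_2;\Z)\cong\Z$ is torsion-free while the domain is $2$-torsion, killing the $\Sq^1\iota_2$-component) and the map $\Theta'\colon \RP^\infty\to Y_0$ with $u_2'=(u_1')^2$, where Kreck's theorem (\cref{thm:kreck}) forces $d_3$ to vanish, ruling out $\iota_1^3$ and $\iota_1\iota_2$ individually. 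You replace both of these by a single algebraic observation: since $d_3$ is defined on $E^3_{3,2}=H_3/\im d_2$, its dual must lie in $\ker\bigl(\Sq^2_{v_1,v_2}\colon H^3(Y_0;\Z/2)\to H^5(Y_0;\Z/2)\bigr)$, and your computation correctly shows this kernel is $\{0,\ \iota_1^3+\iota_1\iota_2\}$. This is an efficient trade: one Steenrod-algebra computation in place of two spectral-sequence comparisons, and it makes transparent \emph{why} only the symmetric combination can occur.

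One caveat you should address: your kernel constraint uses the identification of $d_2\colon E^2_{5,1}\to E^2_{3,2}$ with the dual of $\Sq^2_{w_1,w_2}$, i.e.\ \cref{prop:differentials}~(i) at $p=5$, whereas the paper states that item only for $p\leq 4$. The formula does hold in this range --- the $d_2$-differentials of the James spectral sequence are induced by stable $k$-invariants of the relevant Thom spectrum, hence are dual to a fixed stable operation independently of $p$, and the restriction in \cref{prop:differentials} merely reflects what the authors needed --- but as written your citation is out of its stated range, so you should either note this extension explicitly or fall back on the paper's two comparison maps to eliminate the extra basis directions.
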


For better readability, we will often just say that $d_3^\pi$ is dual to $w_1^3+w_1w_2$.

\begin{proof}[Proof of \cref{thm:main} assuming \cref{thm:d3-diff}.]
	Since \eqref{it:main-2} was already proved in \cref{thm:kreck}, we only show \eqref{it:main-1} and \eqref{it:main-3}. If $w_1^3\neq w_1w_2$, then $d_3^{\pi}$ is nontrivial by \cref{thm:d3-diff}. Hence $[K3]=0\in \Omega_4(\xi_{\pi})$ and thus no stably exotic $4$-manifolds with normal $1$-type $(\pi,w_1,w_2)$ exist by \cref{prop:strategy}~\eqref{item:ii}. This implies~\eqref{it:main-1}.
	
	Now assume that $w_1^3=w_1w_2$ and that $\pi$ has cohomological dimension at most $3$. The differential $d_3^{\pi}$ is trivial by \cref{thm:d3-diff}. Since $\pi$ has cohomological dimension at most $3$, all higher differentials are trivial as well. It follows that $[K3]\neq 0\in \Omega_4(\xi_{\pi})$. The existence of stably exotic $4$-manifolds with normal $1$-type $(\pi,w_1,w_2)$ now follows from \cref{prop:strategy}~\eqref{item:iii}. This completes the proof of \eqref{it:main-3}.
\end{proof}
	
To investigate $d_3^{\pi}$, first we  consider the universal example
   \[X:=K_1\times K_2:=K(\Z/2,1)\times K(\Z/2,2)\]
    determining a fibration $\xi_X \colon B_X \to \BO$ as in \cref{defn:key-pullback}, using $v_j:=p_j^*x_j$ for $j=1,2$, where $p_j\colon X\to K_j$ is the projection and $x_j\in H^j(K_j;\Z/2)$ is the generator.
	
	\begin{lemma}
		\label{lem:nontrivial-X-diff}
		The differential \[d_3^X \colon H_3(X;\Omega_2^{\Spin})/\im d_2^{X} \cong H_3(X;\Z/2)\to H_0(X;\Omega_4^{\Spin})/\im d_2^{X} \cong \Z/2\] in the James spectral sequence for $\xi_X \colon B_X \to \BO$ is nontrivial.
	\end{lemma}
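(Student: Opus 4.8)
The plan is to reduce to the one-variable space $K_1 = \RP^\infty = K(\Z/2,1)$, where the abutment is a $\Pin$ bordism group computed in \cref{prop:pin-4-bordism-groups}, and to transport the conclusion back to $X$ by naturality. Let $\xi_{K_1}\colon B_{K_1}\to \BO$ be the normal $1$-type from \cref{defn:key-pullback} associated to $(x_1,0)\colon K_1\to K(\Z/2,1)\times K(\Z/2,2)$, and let $j\colon K_1\hookrightarrow X=K_1\times K_2$ be the inclusion $K_1\times\{*\}$, so that $j^*v_1=x_1$ and $j^*v_2=0$. Then $j$ is a map over $K(\Z/2,1)\times K(\Z/2,2)$, so it covers a map $B_{K_1}\to B_X$ over $\BO$ and, by naturality of the James spectral sequence, induces a morphism $j_*\colon E^\bullet(\xi_{K_1})\to E^\bullet(\xi_X)$ commuting with all differentials. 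On $E_{0,4}=H_0(-;\Omega_4^{\Spin})$ the map $j_*$ is an isomorphism $\Z/2\to\Z/2$ sending $[K3]$ to $[K3]$, since $j$ induces an isomorphism on $H_0$ with $w_1$-twisted coefficients. From the commuting relation $d_3^X\circ j_*=j_*\circ d_3^{K_1}$ it therefore suffices to prove that $d_3^{K_1}$ is nontrivial.

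To do this I would first identify the abutment over $K_1$. Since $\xi_{K_1}$ has $w_2=0$, a $\xi_{K_1}$-structure on a $4$-manifold is a normal $\Pin^+$ structure, equivalently, by \cref{prop:pin-structures}, a tangential $\Pin^-$ structure; hence $\Omega_4(\xi_{K_1})\cong\Omega_4^{\Pin^-}=0$ by \cref{prop:pin-4-bordism-groups}. In particular $E^\infty_{0,4}=0$. As in \cref{prop:strategy} the term $E^2_{0,4}\cong\Z/2$ is generated by $[K3]$ and receives no $d_2$ (its potential source $H_2(K_1;\Omega_3^{\Spin})$ vanishes), so $E^3_{0,4}\cong\Z/2$; the vanishing of $E^\infty_{0,4}$ then forces at least one of the differentials $d_k\colon E^k_{k,5-k}\to E^k_{0,4}$ for $k\in\{3,4,5\}$ to be surjective.

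It remains to eliminate $k=4$ and $k=5$, which over $\RP^\infty$ is elementary. For $k=5$ the source $E^2_{5,0}=H_5(\RP^\infty;\Z^{w_1})$ vanishes, because the $w_1$-twisted integral homology of $\RP^\infty$ is concentrated in even degrees. For $k=4$ I would check that $E^3_{4,1}=0$: by \cref{prop:differentials} the differential $d_2\colon E^2_{4,1}=H_4(\RP^\infty;\Z/2)\to E^2_{2,2}=H_2(\RP^\infty;\Z/2)$ is dual to $\Sq^2_{x_1,0}\colon H^2(\RP^\infty;\Z/2)\to H^4(\RP^\infty;\Z/2)$, which sends $x_1^2\mapsto x_1^4\neq 0$ and is thus an isomorphism of one-dimensional spaces; its dual $d_2$ is then injective, so $E^3_{4,1}=0$ and $d_4$ has trivial source. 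With $d_4$ and $d_5$ excluded, $d_3^{K_1}$ must be the surjection onto $E^3_{0,4}\cong\Z/2$, hence is nontrivial, and pulling back along $j_*$ gives $d_3^X\neq 0$.

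The step I expect to be the main obstacle is the bordism identification in the second paragraph: one must keep the $\Pin^\pm$ signs and the normal/tangential interchange of \cref{prop:pin-structures} straight, since the superficially similar normal $1$-type with $w_2=w_1^2$ yields instead $\Omega_4^{\Pin^+}\cong\Z/16$ (the case of \cref{thm:kreck}), in which $[K3]\neq 0$ and the argument would fail. Passing to $\RP^\infty$ is exactly what makes both the abutment and the elimination of the higher differentials computable; on $X$ itself the groups $E^2_{4,1}$ and $E^2_{5,0}$ are larger and the direct analysis of $d_4$ and $d_5$ would be considerably more delicate.
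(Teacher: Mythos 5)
Your proposal is correct and takes essentially the same route as the paper's proof: your inclusion $j\colon K_1\hookrightarrow X$ is, up to homotopy, the paper's map $\Theta\colon \RP^\infty\to X$ with $u_1\neq 0$ and $u_2=0$, and both arguments identify $\Omega_4(\xi_{K_1})\cong\Omega_4^{\Pin^-}=0$ via \cref{prop:pin-structures} and \cref{prop:pin-4-bordism-groups}, rule out $d_4$ and $d_5$ by computing $E^3_{4,1}=0$ (from $\Sq^2_{x_1,0}(x_1^2)=x_1^4$) and $E^2_{5,0}=H_5(\RP^\infty;\Z^{-})=0$, and then transport the forced nontrivial $d_3$ to $X$ by naturality of the James spectral sequence.
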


\begin{proof}		
	We consider the map $\Theta\colon \RP^\infty\to X$, with $u_1:=\Theta^*v_1\neq 0$ the generator of $H^1(\RP^\infty;\Z/2)$ and $u_2:=\Theta^*v_2=0 \in H^2(\RP^\infty;\Z/2)$. Since $X= K_1 \times K_2$ is a product of Eilenberg-Maclane spaces, this determines $\Theta$ up to homotopy.  We consider the fibration $\xi_{\RP^\infty} \colon B_{\RP^\infty} \to \BO$ determined as in \cref{defn:key-pullback} by $\RP^\infty$, $u_1$, and $u_2$. We have an induced map $\Theta_* \colon \Omega_4(\xi_{\RP^\infty}) \to \Omega_4(\xi_X)$.
	
	A 4-manifold or a 5-dimensional bordism $W$ admits the structure of a normal bordism over $\xi_{\RP^\infty}$ if and only if $w_2(\nu_W) =0$, which holds if and only if  $\nu_W$ admits a $\Pin^+$ structure, which in turn holds if and only if $TW$ admits a $\Pin^-$ structure by \cref{prop:pin-structures}.
	We deduce that $\Omega_4(\xi_{\RP^\infty})\cong\Omega_4^{\Pin^-}$, and then we recall that $\Omega_4^{\Pin^-}$ vanishes by \cref{prop:pin-4-bordism-groups}.
	In particular, $[K3]$ is trivial in $\Omega_4(\xi_{\RP^\infty}) \cong \Omega_4^{\Pin^-}=0$.

	We now deduce that the differential $d_3^{\RP^{\infty}} \colon H_3(\RP^\infty;\Z/2)/\im d_2^{\RP^{\infty}} \to H_0(\RP^\infty;\Z^{u_1}) \cong \Z/2$ is nontrivial. To do so, we show that $E^2_{2,3}=E^3_{4,1}=E^2_{5,0}=0$ in the James spectral sequence computing $\Omega_*(\xi_{\RP^\infty})$, and hence only $d_3^{\RP^\infty}$ can possibly kill the $E^*_{0,4}$ term; since $[K3]=0$ this term must be killed by one of $d_3^{\RP^{\infty}}$, $d_4^{\RP^{\infty}}$, or $d_5^{\RP^{\infty}}$.
	
	First, $E^2_{2,3} = H_2(\RP^\infty;\Omega_3^{\Spin})=0$ because $\Omega_3^{\Spin} =0$. Next, since $u_2=0$, we have
	\[(\Sq^2+u_1\Sq^1)(u_1^2)=\Sq^2(u_1^2) + u_1 (\Sq^1u_1) u_1 + u_1^2(\Sq^1u_1) =u_1^4.\]
	Hence using \cref{prop:differentials}~\eqref{prop:differentials-item-i} we can compute $d_2^{\RP^{\infty}} \colon E_{4,1}^2 \to E^2_{2,2}$. We have $E_{4,1}^2 \cong H_4(\RP^\infty;\Omega_1^{\Spin}) \cong \Z/2$, with generator dual to $u_1^4$, and $E_{2,2}^2 \cong H_2(\RP^\infty;\Omega_2^{\Spin}) \cong \Z/2$, with generator dual to $u_1^2$. Thus by \cref{prop:differentials}~\eqref{prop:differentials-item-i},  $d_2^{\RP^{\infty}}$ is an isomorphism and so $E^3_{4,1}=0$. We also have $E^2_{5,0}=H_5(\RP^\infty;\Z^-)=0$.  Thus we have shown that $E^2_{2,3}=E^3_{4,1}=E^2_{5,0}=0$, as asserted. It follows that the only possibly nontrivial differential that can hit $E^k_{0,4}$, for some $k \geq 2$, is the differential $d_3^{\RP^{\infty}} \colon E^3_{3,2}\to E^3_{0,4}=\Z/2$. Since we showed that $K3$ is null-bordant in $\Omega_4(\xi_{\RP^{\infty}})$, this differential has to be nontrivial. By naturality of the James spectral sequence, the square
	\[\begin{tikzcd}
		H_3(\RP^\infty;\Z/2)/\im d_2^{\RP^{\infty}}\ar[d,"d_3^{\RP^{\infty}}","\cong"']\ar[r,"\Theta_*"']&H_3(X;\Z/2)/\im d_2^{X}\ar[d,"d_3^X"]\\
		H_0(\RP^\infty;\Z^{u_1})\ar[r,"\cong",,"\Theta_*"'] & H_0(X;\Z^{v_1})
	\end{tikzcd}\]
	commutes. Since $d_3^{\RP^{\infty}}$ is nontrivial (in fact it is an isomorphism because both domain and codomain are order two), and the bottom horizontal map is an isomorphism (again between groups of order two),  it follows that $d_3^X$ is nontrivial, as required.
\end{proof}

	We can now determine $d_3^X$, as follows.

	\begin{lemma}
		\label{lem:d3-X}
		The composition $H_3(X;\Z/2)\twoheadrightarrow H_3(X;\Z/2)/\im d_2^{X}\xrightarrow{d_3^X} H_0(X;\Z^{v_1}) \cong \Z/2$ is dual to $1 \mapsto v_1^3+v_1v_2$.
	\end{lemma}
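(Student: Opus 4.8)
The plan is to identify the class $\omega \in H^3(X;\Z/2)$ to which the composition is dual and to show that it equals $v_1^3 + v_1 v_2$. First I would record the cohomology we are working in: since $H^*(K_1;\Z/2) = \Z/2[v_1]$, and $H^*(K(\Z/2,2);\Z/2)$ is polynomial with degree-$3$ generator $\Sq^1 v_2$ and degree-$5$ generators $\Sq^2 \Sq^1 v_2$ and $v_2 \Sq^1 v_2$, the Künneth theorem gives $H^3(X;\Z/2) = \langle v_1^3,\, v_1 v_2,\, \Sq^1 v_2\rangle$ together with an explicit basis of $H^5(X;\Z/2)$. By \cref{lem:nontrivial-X-diff} the differential $d_3^X$ is nontrivial, so $\omega \neq 0$; the task is then to cut this three-dimensional space of candidates down to the single class $v_1^3 + v_1 v_2$.

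The key structural input is that $d_3^X$ is defined on the quotient $H_3(X;\Z/2)/\im d_2^X$, so $\omega$ must annihilate $\im d_2^X$. The relevant incoming differential is $d_2 \colon E^2_{5,1} = H_5(X;\Z/2) \to E^2_{3,2} = H_3(X;\Z/2)$, while the outgoing $d_2$ from $E^2_{3,2}$ vanishes because $E^2_{1,3} = H_1(X;\Omega_3^{\Spin}) = 0$; hence $E^3_{3,2} = H_3(X;\Z/2)/\im d_2^X$, matching the domain in the statement. By the same computation underlying \cref{prop:differentials}~\eqref{prop:differentials-item-i}, this $d_2$ is dual to $\Sq^2_{v_1,v_2}\colon H^3(X;\Z/2) \to H^5(X;\Z/2)$. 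Dualising, the condition that $\omega$ annihilate $\im d_2^X$ is exactly the statement that $\omega \in \ker \Sq^2_{v_1,v_2}$.

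It then remains to compute this kernel. Using the Cartan formula together with $\Sq^1 v_1 = v_1^2$, $\Sq^2 v_2 = v_2^2$, and $\Sq^1 \Sq^1 = 0$, one finds $\Sq^2_{v_1,v_2}(v_1^3) = v_1^3 v_2 = \Sq^2_{v_1,v_2}(v_1 v_2)$ and $\Sq^2_{v_1,v_2}(\Sq^1 v_2) = \Sq^2\Sq^1 v_2 + v_2 \Sq^1 v_2$. Since $v_1^3 v_2$ and $\Sq^2\Sq^1 v_2 + v_2 \Sq^1 v_2$ lie in different Künneth summands of $H^5(X;\Z/2)$ (namely $H^3(K_1)\otimes H^2(K_2)$ and $H^0(K_1)\otimes H^5(K_2)$), the kernel is cut out by the conditions that the coefficients of $v_1^3$ and $v_1 v_2$ agree and that the coefficient of $\Sq^1 v_2$ vanishes; that is, it is precisely the line spanned by $v_1^3 + v_1 v_2$. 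As $\omega$ is a nonzero element of this one-dimensional space, $\omega = v_1^3 + v_1 v_2$.

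As a sanity check on the first two coefficients, naturality along the map $\Theta$ of \cref{lem:nontrivial-X-diff} forces the coefficient of $v_1^3$ to be $1$, and naturality along the analogous map $\RP^\infty \to X$ pulling $v_2$ back to $u_1^2$ (the $\Pin^+$ situation, where $[K3] \neq 0$ and hence $d_3 = 0$) forces the coefficient of $v_1 v_2$ to agree with that of $v_1^3$. The coefficient of $\Sq^1 v_2$, however, is invisible to both of these $\RP^\infty$-probes, since $\Sq^1$ annihilates both $0$ and $u_1^2$. The genuinely new content — and the step I expect to be the main obstacle — is therefore the verification that $\Sq^1 v_2 \notin \ker \Sq^2_{v_1,v_2}$; this rests both on identifying the degree-$5$ differential $d_2 \colon H_5(X;\Z/2) \to H_3(X;\Z/2)$ with the dual of $\Sq^2_{v_1,v_2}$ (extending \cref{prop:differentials}~\eqref{prop:differentials-item-i}, whose statement is only recorded for $p \leq 4$, into this range by the same argument) and on the polynomial structure of $H^*(K(\Z/2,2);\Z/2)$ through degree $5$.
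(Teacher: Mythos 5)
Your proof is correct, but it takes a genuinely different route from the paper's. Where you pin down the dual class $\omega \in H^3(X;\Z/2)$ by a purely algebraic constraint --- well-definedness of $d_3^X$ on the quotient by $\im d_2^X$ forces $\omega \in \ker\big(\Sq^2_{v_1,v_2}\colon H^3(X;\Z/2)\to H^5(X;\Z/2)\big)$, and your Steenrod computation shows this kernel is exactly the line spanned by $v_1^3+v_1v_2$, so nontriviality (\cref{lem:nontrivial-X-diff}) finishes --- the paper instead rules out the three basis candidates by naturality comparisons: the $\Sq^1 v_2$ coefficient dies by comparing with the James spectral sequence over $K_2$ alone, where $d_3^{K_2}=0$ for torsion reasons ($H_3(K_2;\Z/2)$ is $2$-torsion while $H_0(K_2;\Z)\cong\Z$ since the coefficients are untwisted), and the map $\Theta'$ with $(u_1')^2=u_2'$, together with \cref{thm:kreck} (so $[K3]\neq 0$ in $\Omega_4^{\Pin^+}\cong\Z/16$ and hence $d_3^{\RP^\infty}=0$), rules out the duals $v_1^3$ and $v_1v_2$ individually. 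Your $\RP^\infty$ ``sanity checks'' are precisely the paper's main argument for those two coefficients, so the genuinely divergent step is your treatment of $\Sq^1 v_2$. The trade-off: your route is more self-contained and mechanical (one Cartan-formula computation replaces two comparison arguments and the geometric input $\Omega_4^{\Pin^+}\cong\Z/16$), but it leans on the $p=5$ case of \cref{prop:differentials}~\eqref{prop:differentials-item-i}, which the paper deliberately states only for $p\leq 4$; you correctly flag this. The extension is indeed routine --- the $d_2$ on the $q=1$ row of the James spectral sequence is induced by a single stable operation (the relevant Postnikov attaching map of the Thom spectrum, using that the unit $\mathbb{S}\to M\Spin$ is an isomorphism on $\pi_1$ and $\pi_2$), so the formula $\Sq^2_{w_1,w_2}$ is uniform in $p$ --- but note that the paper's own proof is structured exactly so as never to need a differential computation outside the stated range, which is presumably why it opts for the naturality arguments. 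If you adopt your approach in writing, you should make the $p=5$ extension explicit rather than cite \cref{prop:differentials} as stated.
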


\begin{proof}
	By the K\"{u}nneth theorem
	\[H_3(X;\Z/2)\cong H_3(K_2;\Z/2)\oplus \big(H_2(K_2;\Z/2)\otimes H_1(K_1;\Z/2)\big) \oplus H_3(K_1;\Z/2)\cong H_3(K_2;\Z/2)\oplus(\Z/2)^2.\]
	By naturality of the James spectral sequences, comparing those for $(B_{K_2},\xi_{K_2})$ and $(B_X,\xi_X)$ via the inclusion of the second factor $\{*\}\times K_2 \hookrightarrow X$, there is a commutative diagram
	\[\begin{tikzcd}
		H_3(K_2;\Z/2)/\im d_2^{K_2}\ar[d,"d_3^{K_2}"]\ar[r]&H_3(X;\Z/2)/\im d_2^{X}\ar[d,"d_3^{X}"]\\
		H_0(K_2;\Z)\ar[r]&H_0(X;\Z^{v_1}).
	\end{tikzcd}\]
	Since $H_0(K_2;\Z)\cong \Z$, and $H_3(K_2;\Z/2)$ is 2-torsion, $d_3^{K_2}$ is trivial and hence $d_3^X$ is trivial on the image of $H_3(K_2;\Z/2)$. The quotient $H_3(X;\Z/2)/H_3(K_2;\Z/2)\cong (\Z/2)^2$ is generated by the two classes dual to $v_1v_2$ and $v_1^3$.
	
	Consider the map $\Theta'\colon \RP^\infty\to X$ with $(\Theta')^*v_1\neq 0$ and $(\Theta')^*(v_2) = (\Theta')^*(v_1^2)$, i.e.\ $(\Theta')^*(v_2+v_1^2)=0$.
	Again since $X$ is a product of Eilenberg-Maclane spaces, this determines $\Theta'$ up to homotopy.
	Write $u'_1:=(\Theta')^*v_1 \in H^1(\RP^\infty;\Z/2)$ and $u'_2:=(\Theta')^*v_2 \in H^2(\RP^\infty;\Z/2)$.  This gives rise to a fibration $\xi'_{\RP^\infty} \colon B'_{\RP^\infty} \to \BO$ as in \cref{defn:key-pullback}, with a corresponding bordism group $\Omega_4(\xi'_{\RP^\infty})$ and a map $\Omega_4(\xi'_{\RP^\infty}) \to \Omega_4(\xi_X)$. Since $u_2'=(u_1')^2$, it follows from the proof of \cref{thm:kreck} that $[K3]$ is nontrivial in $\Omega_4(\xi'_{\RP^\infty})$. In particular the differential $d_3^{\RP^{\infty}} \colon H_3(\RP^{\infty};\Z/2)/\im d_2^{\RP^{\infty}}  \to H_0(\RP^{\infty};\Z^{u_1'})$ must be trivial. From the square
	\[\begin{tikzcd}
		H_3(\RP^\infty;\Z/2)/\im d_2^{\RP^{\infty}}\ar[d,"d_3^{\RP^{\infty}}","0"']\ar[r,"\Theta_*'"'] & H_3(X;\Z/2)/\im d_2^{X}\ar[d,"d_3^X"]\\
		H_0(\RP^\infty;\Z^{u_1'})\ar[r,"\cong","\Theta_*'"'] & H_0(X;\Z^{v_1}),
	\end{tikzcd}\]
	we deduce that the right-down composition is trivial. Taking $\Hom_{\Z/2}(-,\Z/2)$ duals, and using the map $\ev \colon H^3(X;\Z/2) \xrightarrow{\cong} H_3(X;\Z/2)^*$ from universal coefficients, we see that the composition
	\[\Z/2 \cong H_0(X;\Z^{v_1})^* \xrightarrow{\ev^{-1} \circ (d_3^X)^*} H^3(X;\Z/2) \xrightarrow{(\Theta')^*} H^3(\RP^\infty;\Z/2) \]
	is trivial.
	Since $(\Theta')^*(v_1^3) = (u'_1)^3$ and $(\Theta')^*(v_1v_2) = u_1'u_2' = (u'_1)^3$ are nontrivial in $H^3(\RP^{\infty};\Z/2)$, the differential $d_3^X$ is neither dual to $1 \mapsto v_1^3$ nor to $1 \mapsto v_1v_2$. Since $d_3^X$ is nontrivial by \cref{lem:nontrivial-X-diff}, there is one remaining option, and $d_3^X$ must be dual to $1 \mapsto v_1^3+v_1v_2$.
\end{proof}

\begin{proof}[Proof of \cref{thm:d3-diff}]
	One more application of naturality yields the square:
	\[\begin{tikzcd}[column sep = large]
		H_3(\pi;\Z/2)/\im d_2^\pi\ar[d,"d_3^{\pi}"]\ar[r,"{(w_1,w_2)_*}"] & H_3(X;\Z/2)/\im d_2^X\ar[d,"d_3^X"]\\
		H_0(\pi;\Z^{w_1})\ar[r,"\cong"] & H_0(X;\Z^{v_1}).
	\end{tikzcd}\]
	Since $d_3^X$ is dual to $v_1^3+v_1v_2$ by \cref{lem:d3-X}, and the map $B\pi \to X = K(\Z/2,1) \times K(\Z/2,2)$ pulls the universal classes $v_1 \in H^1(X;\Z/2)$ and $v_2 \in H^2(X;\Z/2)$ back to $w_1$ and $w_2$ respectively, it follows that $d_3^\pi$ is dual to $1 \mapsto w_1^3 + w_1w_2$.
\end{proof}

\section{The universal space \texorpdfstring{for $w_1^3 + w_1w_2 =0$}{when w1 cubed equals w1w2}}
\label{sec:proof-thm-B}

Our next goal is to understand the class $[K3]$ in the case that $w_1^3 + w_1w_2 =0$ and prove \cref{thm:B}. For this we consider the fibration sequence
\begin{equation}\label{eqn:defn-of-F}
    F \xrightarrow{u} X \xrightarrow{\iota_1^3+\iota_1\iota_2} K(\Z/2,3),
\end{equation}
where as above
\[
    X :=  K_1 \times K_2 := K(\Z/2,1)\times K(\Z/2,2),
    \]
and, for $i=1,2$,
\[
\iota_i  \in H^i(X;\Z/2)
\]
is the pullback of the generator of $H^i(K_i;\Z/2) \cong \Z/2$ under the projection $p_i \colon X \to K_i$.
By definition $F$ is  the homotopy fibre of the map  $\iota_1^3+\iota_1\iota_2$.

We will consider the fibration $\xi_F \colon B_F \to \BO$, constructed
with \begin{equation}\label{defn:v-i}
  v_i:=u^*\iota_i \in H^i(F;\Z/2),\,\,\, i=1,2,
  \end{equation}
as in \cref{defn:key-pullback}.
Note that $v_1^3 + v_1v_2 = u^*(\iota_1^3 + \iota_1\iota_2) =0$, as this corresponds to consecutive maps in the fibration sequence~\eqref{eqn:defn-of-F}.
In addition, if we have normal 1-type data $(\pi,w_1,w_2)$ with $w_1^3 + w_1w_2 =0$, the composition $B\pi \xrightarrow{(w_1,w_2)} K_1 \times K_2 \xrightarrow{\iota_1^3+\iota_1\iota_2} K(\Z/2,3)$ is null-homotopic, and so there is a lift $f\colon B\pi \to F$, resulting in a map $B_\pi \to B_F$ over $\BO$ and a corresponding map $\Omega_4(\xi_\pi) \to \Omega_4(\xi_F)$.
We will prove the following statement.

\begin{theorem}\label{thm:vanishing-K3-in-Omega-4-F}
The class $[K3]$ lies in the kernel of $\Omega_4(\xi_\pi) \to \Omega_4(\xi_F)$.
\end{theorem}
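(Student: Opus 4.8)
The plan is to reduce the statement to the vanishing of $[K3]$ in $\Omega_4(\xi_F)$ itself. Since $f\colon B\pi\to F$ lifts $(w_1,w_2)$ and the induced map $B_\pi\to B_F$ is over $\BO$, \cref{prop:strategy}~\eqref{item:i} together with naturality of the James spectral sequence shows that $\Omega_4(\xi_\pi)\to\Omega_4(\xi_F)$ carries the canonical class $[K3]$ to the canonical class $[K3]\in\Omega_4(\xi_F)$. It therefore suffices to prove $[K3]=0\in\Omega_4(\xi_F)$, and for this I would analyse the James spectral sequence for $\xi_F$. The long exact sequence of the fibration~\eqref{eqn:defn-of-F} gives $\pi_1(F)\cong\pi_1(X)\cong\Z/2$, so $v_1=u^*\iota_1\neq 0$; hence, exactly as in the proof of \cref{prop:strategy}, the term $E^2_{0,4}\cong H_0(F;\Omega_4^{\Spin})\cong\Z/2$ is generated by $[K3]$, and the goal becomes to show that this $\Z/2$ does not survive to $E^\infty$.

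First I would eliminate the primary differential. By construction $v_1^3+v_1v_2=u^*(\iota_1^3+\iota_1\iota_2)=0\in H^3(F;\Z/2)$, as these are consecutive maps in~\eqref{eqn:defn-of-F}. Running the naturality argument from the proof of \cref{thm:d3-diff} through $u\colon F\to X$ and invoking \cref{lem:d3-X}, the differential $d_3^F\colon E^3_{3,2}\to E^3_{0,4}$ is dual to $v_1^3+v_1v_2=0$ and hence vanishes. Since $\Omega_3^{\Spin}=0$ and the spectral sequence is first-quadrant, the only remaining differentials with codomain $E^k_{0,4}$ are $d_4^F\colon E^4_{4,1}\to E^4_{0,4}$ and $d_5^F\colon E^5_{5,0}\to E^5_{0,4}$. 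The crux is to prove that $d_4^F$ is nontrivial. Heuristically this is exactly what one expects: replacing $X$ by the homotopy fibre $F$ of $\iota_1^3+\iota_1\iota_2$ kills the primary obstruction $d_3^X$ that destroyed $[K3]$ over $X$, at the cost of promoting it to a secondary, $d_4$-level obstruction.

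To make $d_4^F\neq 0$ precise I would compute $H_*(F;\Z/2)$ through degree $5$, using either the fibration $K(\Z/2,2)\to F\xrightarrow{u}X$ whose transgression carries the fibre class to $\iota_1^3+\iota_1\iota_2$, or the description of $F$ from \cref{thm:B}~\eqref{it:B-1} as a $K((\Z/2)^2,2)$-bundle over $K(\Z/2,1)$ with the swap action and trivial $k$-invariant. This determines $E^2_{4,1}=H_4(F;\Z/2)$ and $E^2_{5,0}=H_5(F;\Z^{v_1})$, and via \cref{prop:differentials} the $d_2$-differentials entering $E^4_{4,1}$. A convenient comparison is provided by the section $s\colon K(\Z/2,1)\to F$ with $s^*v_2=v_1^2$ from \cref{thm:B}~\eqref{it:B-2}: by \cref{prop:pin-structures} it identifies the pulled-back normal $1$-type with the one whose bordism group is $\Omega_4^{\Pin^+}\cong\Z/16$, exactly as in the proof of \cref{thm:kreck}, yielding a homomorphism $\Z/16\to\Omega_4(\xi_F)$ that carries the order-two class $[K3]$ to $[K3]$ and supplies explicit low-filtration generators. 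I expect that $d_4^F$ will be identified, in the next section, with the dual of a class $\mathfrak{o}\in H^4(F;\Z/2)$, so that computing $d_4^F$ is tantamount to the computation carried out there.

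The step I expect to be the main obstacle is precisely this determination of $d_4^F$: unlike $d_2$ and $d_3$, it is a genuine secondary cohomology operation refining the relation $v_1^3+v_1v_2=0$, and so is not directly accessible from \cref{prop:differentials}. I anticipate that the cleanest way to pin it down is an independent computation of $\Omega_4(\xi_F)$---for instance through the Thom spectrum associated with $B_F\simeq\hofib\big(\BO\xrightarrow{w_1^3+w_1w_2}K(\Z/2,3)\big)$, or by fully exploiting the $\Pin^+$ comparison above---followed by an order count against the surviving terms $E^3_{p,4-p}$ on the total-degree-$4$ line. Once $|\Omega_4(\xi_F)|$ is seen to be too small to accommodate a surviving $E^\infty_{0,4}$, the vanishing $d_3^F=0$ forces $d_4^F\neq 0$, whence $E^\infty_{0,4}=0$ and $[K3]=0\in\Omega_4(\xi_F)$, as required.
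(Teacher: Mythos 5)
Your opening reductions are fine and agree with the paper: naturality sends the canonical class $[K3]\in\Omega_4(\xi_\pi)$ to $[K3]\in\Omega_4(\xi_F)$, and $d_3^F$ vanishes because $v_1^3+v_1v_2=u^*(\iota_1^3+\iota_1\iota_2)=0$. But the decisive step---showing $d_4^F\neq 0$, equivalently $E^\infty_{0,4}=0$---is never executed. You name it yourself as ``the main obstacle'' and offer only candidate strategies (a Thom-spectrum computation for $B_F\simeq\hofib\big(\BO\to K(\Z/2,3)\big)$, an order count against the degree-$4$ line), none of which is carried out; the Thom-spectrum route in particular would be a genuinely nontrivial computation that your sketch does not begin. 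Moreover, deferring to the ``next section's'' identification of $d_4^F$ with a class $\mathfrak{o}$ is circular in the paper's logic: there $\mathfrak{o}$ is \emph{defined} as the dual of $d_4^F$ (\cref{defn:mathfrak-o}), and the proof that $\mathfrak{o}\neq 0$ (\cref{cor:d4-nontrivial-F}) is \emph{deduced from} the vanishing of $[K3]$ (\cref{lem:d4-nonzero}), i.e.\ from the very statement you are trying to prove. The paper's logical direction is the reverse of yours: it first proves $[K3]=0$ directly, and only then concludes $d_4^F\neq 0$.

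The missing idea is the passage to the universal cover together with the deck transformation. The paper uses $\wt F\simeq K_2\times K_2\times S^\infty$ (\cref{cor:wtF}), computes $\Omega_4(\xi_{\wt F})\cong(\Z\oplus\Z)/(4,-4)$ with the two summands coming from the factors $Y_i=K_2$ (\cref{lem:wtF-bordism}), identifies the image of $\Omega_4(\xi_{\wt F^{(2)}})$ as $(8\Z\oplus 8\Z)/(8,-8)$ containing $[K3]=(16,0)$, and then observes that the map down to $\Omega_4(\xi_{F^{(4)}})$ factors through the coinvariants of the orientation-reversing deck action $(z,z')\mapsto(-z',-z)$, in which $(16,0)=(8,8)=(8,0)+(-8,0)=(0,0)$. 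Nothing in your proposal plays this transfer-type role; in fact your $\Pin^+$ comparison points the wrong way: the section $s\colon K_1\to F$ gives a map $\Omega_4^{\Pin^+}\cong\Z/16\to\Omega_4(\xi_F)$ carrying $[K3]\mapsto[K3]$, which by itself could only ever certify \emph{non}vanishing (the correct conclusion forces this map to kill $8\in\Z/16$, so ``fully exploiting'' it cannot yield $[K3]=0$). Finally, note that the paper proves the sharper skeleton-level statement $[K3]=0\in\Omega_4(\xi_{F^{(4)}})$, which is needed later for \cref{cor:d4-nontrivial-F}; an order count on $F$ itself, even if completed, would not deliver that refinement.
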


We will use this statement to compute the  $d_4$ differential in the spectral sequence
\[H_p(F;\Omega_q^{\spin}) \,\Rightarrow\, \Omega_4(\xi_F).\]
Then, by comparing spectral sequences, we will use this to give criteria on $(\pi,w_1,w_2)$ with implications on the fate of $[K3]$ in $\Omega_4(\xi_\pi)$, and hence on the existence of stable exotica with this normal 1-type.

To begin, we determine the homotopy type of $F$.

\begin{proposition}
	\label{prop:hom-groups-F}
	The space $F$ is 3-coconnected with $\pi_1(F)\cong \Z/2$, $\pi_2(F)\cong (\Z/2)[\Z/2]$, and trivial $k$-invariant. Furthermore, $v_1\neq 0$ and $v_2\neq v_1^2$.
\end{proposition}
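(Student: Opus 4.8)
The plan is to read off the homotopy groups from the long exact sequence of the fibration \eqref{eqn:defn-of-F}, then pin down the $\pi_1(F)$-module structure on $\pi_2(F)$ by a monodromy computation, and finally deduce the remaining assertions as formal consequences.

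First I would apply the long exact sequence of homotopy groups to $F \xrightarrow{u} X \xrightarrow{\iota_1^3 + \iota_1\iota_2} K(\Z/2,3)$. Since $X = K_1 \times K_2$ has $\pi_1 = \Z/2$, $\pi_2 = \Z/2$ and no higher homotopy, while $K(\Z/2,3)$ is $2$-connected with $\pi_3 = \Z/2$, the sequence gives $\pi_1(F) \cong \pi_1(X) \cong \Z/2$, an extension $0 \to \pi_3 K(\Z/2,3) \to \pi_2(F) \to \pi_2(X) \to 0$, i.e.\ $0 \to \Z/2 \to \pi_2(F) \to \Z/2 \to 0$, and $\pi_n(F) = 0$ for $n \geq 3$. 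This already shows $F$ is $3$-coconnected and that $\pi_2(F) \cong (\Z/2)^2$ as an abelian group; the real content is the $\Z/2[\Z/2]$-module structure.

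The main step is to determine the deck-transformation action of the generator $T$ of $\pi_1(F)$ on $\pi_2(F) \cong H^2(\wt F;\Z/2)$. I would identify the universal cover by pulling the fibration back along $\wt X \to X$: since $\wt X \simeq K_2 = K(\Z/2,2)$ (the universal cover of $K_1 = \RP^\infty$ being contractible) and $\iota_1$ restricts to $0$ there, the composite $\wt X \to K(\Z/2,3)$ is null-homotopic, so $\wt F \simeq \wt X \times \Omega K(\Z/2,3) \simeq K(\Z/2,2) \times K(\Z/2,2)$, with $H^2(\wt F;\Z/2) = \langle a, b\rangle$, where $a$ is the restriction of $v_2 = u^*\iota_2$ and $b$ is the fibre class. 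The class $a$ comes from $\wt X$, on which $T$ acts trivially, so $T^* a = a$; thus $T^* b = b + \epsilon a$ for some $\epsilon \in \Z/2$, and the module is free (isomorphic to $\Z/2[\Z/2]$) precisely when $\epsilon = 1$. To compute $\epsilon$ I would restrict the whole fibration along $\ell \times \id \colon S^1 \times K_2 \to K_1 \times K_2 = X$, where $\ell$ represents the nontrivial element of $\pi_1(K_1)$. Here $\iota_1^3$ pulls back to $0$ (as $\iota_1$ pulls back to the generator $s \in H^1(S^1;\Z/2)$ with $s^2 = 0$), while $\iota_1\iota_2$ pulls back to $s \otimes \iota_2 \neq 0$, so the pulled-back total space is $E := \hofib(S^1 \times K_2 \xrightarrow{s\otimes\iota_2} K(\Z/2,3))$. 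Running the Serre spectral sequence for $\Omega K(\Z/2,3) = K(\Z/2,2) \to E \to S^1 \times K_2$, the fibre class transgresses to $s \otimes \iota_2 \neq 0$, which kills the $(0,2)$-line and forces $\dim_{\Z/2} H^2(E;\Z/2) = \dim_{\Z/2} H^2(S^1 \times K_2;\Z/2) = 1$. On the other hand $E \to S^1$ is the mapping torus of the monodromy $\mu$ acting on the fibre $F_0 \simeq K_2 \times \Omega K(\Z/2,3)$ with $H^2(F_0;\Z/2) = \langle a,b\rangle$, and since $H^1(F_0;\Z/2) = 0$ the Wang sequence gives $H^2(E;\Z/2) \cong \ker\big(\mu^* - \id \colon H^2(F_0;\Z/2) \to H^2(F_0;\Z/2)\big)$. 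Comparing dimensions, $\mu^* - \id$ has rank $1$, so $\mu^* \neq \id$; as $\mu^*$ is precisely the deck action $T^*$ restricted here and $\mu^* a = a$, this forces $\mu^* b = b + a$, i.e.\ $\epsilon = 1$. Hence $\pi_2(F) \cong \Z/2[\Z/2]$, with $T^*$ swapping the basis $\{b,\,a+b\}$, which matches the basis $\{x_1,x_2\}$ with $T^*x_1 = x_2$ used in \cref{thm:B}.

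The remaining claims are then formal. The single $k$-invariant lies in $H^3(\Z/2; \pi_2(F)) = H^3(\Z/2; \Z/2[\Z/2])$, which vanishes because the free (induced) module is cohomologically trivial by Shapiro's lemma; hence the $k$-invariant is trivial. For the Stiefel--Whitney statements, $u$ induces an isomorphism on $\pi_1$ and hence on $H^1(-;\Z/2)$, so $v_1 = u^*\iota_1 \neq 0$; and $v_2 \neq v_1^2$ follows from the restriction to $\wt F$, where $v_1^2 \mapsto 0$ but $v_2 \mapsto a \neq 0$ (equivalently, from the Serre spectral sequence of $u$, in which $E_\infty^{2,0} = H^2(X;\Z/2) = \langle \iota_1^2, \iota_2\rangle$ maps isomorphically onto $H^2(F;\Z/2) = \langle v_1^2, v_2\rangle$ with $v_1^2, v_2$ linearly independent). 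I expect the module-structure computation, i.e.\ showing $\epsilon = 1$, to be the main obstacle: the delicate point is that the untwisted $\Z/2$-cohomology of $F$ alone does not distinguish a nontrivial module from a nontrivial $k$-invariant, so one must genuinely compute the monodromy. The restriction to $S^1 \times K_2$ isolates the $\iota_1\iota_2$ term of the attaching map (the $\iota_1^3$ term being invisible there since $s^2 = 0$), and it is exactly this term that produces the nontrivial action; the mapping-torus dimension count is the cleanest way I see to certify $\mu^* \neq \id$ without tracking an explicit null-homotopy.
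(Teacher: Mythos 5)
Your proposal is correct, but it takes a genuinely different route from the paper's proof at essentially every step beyond the initial long exact sequence. The paper works with explicit lifts throughout: it splits the extension $0 \to \Z/2 \to \pi_2(F) \to \Z/2 \to 0$ by lifting the factor inclusion $i_2\colon K_2\to X$ (possible since $i_2^*\iota_1=0$); it proves the $k$-invariant trivial by constructing an explicit section $s\colon K_1\to F$ from $s^*\iota_1=x$, $s^*\iota_2=x^2$, using $x^3+x\cdot x^2=0$; it proves $v_2\neq v_1^2$ by universality of $F$ against the example $(\Z^2, e_1, e_1\wedge e_2)$ of \cref{remark-showing-ii-does-not-imply-iii}; and it determines the module structure by contradiction, since a trivial action together with the (already established) trivial $k$-invariant would give $F\simeq K_1\times K_2\times K_2$, where multiplication by the degree-one class is injective, contradicting $v_1^3=v_1v_2$. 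You instead run the logic in the opposite order: you trivialise the covering fibration ($\wt F\simeq \wt X\times \Omega K(\Z/2,3)$, since $\iota_1^3+\iota_1\iota_2$ restricts to zero on $\wt X\simeq K_2$), compute the monodromy \emph{directly} by restricting along $S^1\times K_2\to X$ and counting dimensions via the transgression plus the Wang sequence, then deduce triviality of the $k$-invariant from Shapiro's lemma, $H^3(\Z/2;(\Z/2)[\Z/2])=0$, and get $v_2\neq v_1^2$ by restriction to $\wt F$ (where $p^*v_1^2=0$ but $p^*v_2\neq 0$). Your route is constructive where the paper's is by contradiction, and along the way it subsumes \cref{cor:wtF} and the computation $p^*v_2=x_1+x_2$ of \cref{cor:cohom-wtF} (your $a$ is $x_1+x_2$ in the swapped basis); on the other hand, the paper's concrete section with $s^*v_2=s^*v_1^2$ is reused later (\cref{lem:cohom-classes-F}, \cref{cor:d4}), whereas Shapiro only yields existence of sections abstractly.

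Two small points to tighten. First, the long exact sequence alone gives only the extension $0\to\Z/2\to\pi_2(F)\to\Z/2\to 0$, which a priori could be $\Z/4$; your claim that it ``already shows'' $\pi_2(F)\cong(\Z/2)^2$ is only justified once you have the splitting $\wt F\simeq \wt X\times \Omega K(\Z/2,3)$, which you do establish in the very next step, so this is presentational rather than a gap. Second, the identification of the monodromy $\mu$ of $E\to S^1$ with the deck transformation deserves a line of justification: note that $E=F\times_X(S^1\times K_2)\cong F\times_{K_1}S^1$, so $E\to S^1$ is the pullback along $\ell$ of the (homotopy) fibration $F\to K_1$ with fibre $\wt F\simeq F_0$, whose holonomy is the deck action; with that observed, your dimension count via $\dim H^2(E;\Z/2)=1=\dim\ker(\mu^*-\id)$ correctly forces $\mu^*b=b+a$, hence $\pi_2(F)\cong(\Z/2)[\Z/2]$.
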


\begin{proof}
	The long exact sequence of homotopy groups for the fibration sequence~\eqref{eqn:defn-of-F} defining $F$ yields
    \[0 \to \pi_1(F) \xrightarrow{u_*} \pi_1(X) \to 0,\]
    which shows that $u$ induces an isomorphism $\pi_1(F)\cong \pi_1(X)\cong \Z/2$.

    Thus $v_1:=u^*\iota_1\in H^1(F;\Z/2)$ is nontrivial.
    Since there is a space $Y$ with cohomology classes $a_i\in H^i(Y;\Z/2)$ for $i=1,2$, such that $a_1^3=a_1a_2$ and $a_2\neq a_1^2$ (see \cref{remark-showing-ii-does-not-imply-iii} for an example) and $F$ is the universal space with cohomology classes $v_1,v_2$ such that $v_1^3=v_1v_2$, we must have that $v_2\neq v_1^2$.

    The long exact sequence of homotopy groups for the fibration sequence defining $F$ also shows that there is a short exact sequence
	\begin{equation}\label{eqn:ses-for-pi2-F}
	    0\to \pi_3(K(\Z/2,3)) \cong \Z/2\to \pi_2(F)\to \pi_2(X) \cong \Z/2\to 0.
	\end{equation}
	Let $i_2\colon K_2\to X$ be the factor inclusion. Then $i_2^*(\iota_1)=0$, so  $i_2^*(\iota_1^3+\iota_1\iota_2)=0$, and hence $i_2$ admits a lift to $F$. This implies that the  sequence~\eqref{eqn:ses-for-pi2-F} splits, and hence \[\pi_2(F)\cong \Z/2\oplus\Z/2\] as an abelian group. Similarly, let $s\colon K_1\to X$ be determined by $s^*\iota_1=x$ and $s^*\iota_2=x^2$ where, $x$ is the generator of $H^1(K_1;\Z/2)$. Then $s^*(\iota_1^3+\iota_1\iota_2)=x^3+x^3=0$ and hence $s$ admits a lift to $F$. This gives a splitting of the 2-connected map $c\colon F\to K_1$ and shows that $F$ has trivial $k$-invariant.
	
	It remains to determine the $\Z[\pi_1(F)]$-module structure of $\pi_2(F)$. The only possibilities are the trivial action and $\pi_2(F) \cong (\Z/2)[\Z/2]$.
    Suppose for a contradiction that the $\pi_1$-action on $\pi_2$ is trivial. Then we would have $F\simeq K_1 \times K_2\times K_2$.
    In the $\Z/2$-cohomology ring of $K_1 \times K_2\times K_2$, multiplication with the unique nontrivial class in degree 1 is injective, by the K\"{u}nneth theorem and because $H^*(K_1;\Z/2) \cong (\Z/2)[x]$. Hence $v_1^2 \neq v_2$ implies $v_1^3\neq v_1v_2$,  contradicting that $v_1^3= v_1v_2$ by construction.
    We see that $\pi_2(F)\cong (\Z/2)[\Z/2]$ as claimed.
\end{proof}

\begin{corollary}
	\label{cor:wtF}
    Let $\wt F$ be the universal cover of $F$.
	There is a $\Z/2$-equivariant homotopy equivalence $\wt F\simeq K_2\times K_2 \times S^{\infty}$, where $\Z/2$ acts via the deck transformations on $\wt F$ and via $(x,y,z)\mapsto (y,x,-z)$ on $K_2\times K_2 \times S^{\infty}$.
\end{corollary}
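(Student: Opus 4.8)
The plan is to identify $F$ with a Borel construction and then read its universal cover off directly. By \cref{prop:hom-groups-F}, the only nontrivial homotopy groups of $F$ are $\pi_1(F) \cong \Z/2$ and $\pi_2(F) \cong M := (\Z/2)[\Z/2]$, on which $\Z/2 = \pi_1(F)$ acts by the regular representation, i.e.\ by interchanging the two basis elements, and moreover the $k$-invariant is trivial. A connected space with these two homotopy groups and nothing else is determined, as a fibration over $B\Z/2 = \RP^\infty$ with fibre $K(M,2)$ and the prescribed $\pi_1$-action, by its $k$-invariant in $H^3(\Z/2;M)$. The Borel construction $E\Z/2 \times_{\Z/2} K(M,2)$ is another such fibration, and it carries a section induced by the basepoint (a $\Z/2$-fixed point) of $K(M,2)$, hence has trivial $k$-invariant. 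I would therefore conclude that there is a fibre homotopy equivalence over $B\Z/2$
\[ F \simeq E\Z/2 \times_{\Z/2} K(M,2). \]

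For an explicit model I would take $K(M,2) = K_2 \times K_2$ with $\Z/2$ acting by the swap $(x,y)\mapsto(y,x)$, which realises the regular representation on $\pi_2 = M \cong \Z/2 \oplus \Z/2$, and $E\Z/2 = S^{\infty}$ with the antipodal action. This gives $F \simeq (S^{\infty}\times K_2\times K_2)/(\Z/2)$, where $\Z/2$ acts diagonally and freely.

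Finally I would pass to universal covers. Since $S^{\infty}$ is contractible and $K_2 \times K_2$ is simply connected, the space $S^{\infty}\times K_2\times K_2$ is simply connected, and the free diagonal $\Z/2$-action exhibits it as the universal cover of the Borel construction, with deck transformations acting diagonally. The fibre homotopy equivalence above commutes up to homotopy with the maps to $B\Z/2$ (the first Postnikov sections), so pulling back along the universal covers yields a $\Z/2$-equivariant map $\wt F \to S^{\infty}\times K_2\times K_2$ between free $\Z/2$-CW complexes which is a non-equivariant homotopy equivalence, hence a $\Z/2$-equivariant homotopy equivalence by the equivariant Whitehead theorem. Reordering the factors gives the claimed action $(x,y,z)\mapsto(y,x,-z)$ on $K_2\times K_2\times S^{\infty}$.

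The step I expect to require the most care is the identification $F \simeq E\Z/2 \times_{\Z/2} K(M,2)$: one must justify that the trivial $k$-invariant really pins down the fibre homotopy type over $B\Z/2$ among fibrations with fibre $K(M,2)$ and the given $\pi_1$-action, and that the resulting equivalence is compatible with the deck $\Z/2$-actions on the universal covers so that it can be lifted equivariantly. The non-equivariant homotopy type of $\wt F$ is immediate, since it is simply connected with $\pi_2 \cong \Z/2\oplus\Z/2$ and no higher homotopy, hence $\wt F \simeq K(\pi_2(F),2) \simeq K_2 \times K_2$; the entire content of the corollary is this equivariant refinement, for which the Borel construction is the most economical bookkeeping device.
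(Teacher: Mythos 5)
Your proposal is correct, but it is organised rather differently from the paper's proof, which is a two-line argument: the paper observes that $K_2\times K_2\times S^\infty$ and $\wt F$ have the same homotopy groups, that both $\Z/2$-actions have empty fixed point sets, and then cites the equivariant Whitehead theorem (Matumoto). Read literally, that argument leaves the construction of the equivariant comparison map to the reader — the equivariant Whitehead theorem upgrades an already-given equivariant map that is an equivalence on all fixed point sets, and abstract agreement of homotopy groups does not by itself produce such a map. Your route supplies exactly this missing step: you first identify $F$ itself with the Borel construction $E\Z/2\times_{\Z/2}K(M,2)$, $M=(\Z/2)[\Z/2]$, using the classification of $3$-coconnected spaces with prescribed $(\pi_1,\pi_2)$ by their $k$-invariant, with the triviality of the $k$-invariant of $F$ coming from \cref{prop:hom-groups-F} and that of the Borel construction from the section induced by the swap-fixed basepoint of $K_2\times K_2$; you then lift the resulting equivalence of quotients to a genuinely equivariant map of universal covers (note that since $\Aut(\Z/2)$ is trivial, there is no ambiguity in the identification of fundamental groups) and only then invoke the equivariant Whitehead theorem, in the same way the paper does. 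What your approach buys is an explicit map and a self-contained justification; what it costs is the appeal to the $k$-invariant classification of Postnikov $2$-stages as fibre homotopy types over $B\Z/2$, which is standard but does need the care you flag. Two small remarks: the obstruction group is $H^3(\Z/2;(\Z/2)[\Z/2])$, which vanishes by Shapiro's lemma since the coefficients are (co)induced, so the identification $F\simeq E\Z/2\times_{\Z/2}K(M,2)$ is in fact automatic once the homotopy groups are known, independently of the section argument; and to apply the equivariant Whitehead theorem one should fix free $\Z/2$-CW structures (lift a CW structure on $F$ to $\wt F$, and take a product structure on $S^\infty\times K_2\times K_2$), a point which neither you nor the paper makes explicit but which is routine.
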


\begin{proof}
    The space $K_2\times K_2 \times S^{\infty}$ has the same homotopy groups as $\wt{F}$. Moreover the fixed point sets of the $\Z/2$ action on both $\wt{F}$ and $K_2\times K_2 \times S^{\infty}$ are empty. Hence the spaces are equivariantly homotopy equivalent by the equivariant Whitehead theorem; see e.g.~\cite{matumoto}*{Theorem~5.3}.
\end{proof}

Having determined the homotopy type of $F$, we can now give the following description of the classes $u^*\iota_i \in H^i(F;\Z/2)$ for $i=1,2$.

\begin{lemma}
	\label{lem:cohom-classes-F}
	We have $H^1(F;\Z/2)\cong \Z/2$ and $H^2(F;\Z/2)\cong (\Z/2)^2$. The class $v_1:=u^*\iota_1\in H^1(F;\Z/2)$ is nontrivial, and the class $v_2:=u^*\iota_2\in H^2(F;\Z/2)$ is the unique class such that $v_2\neq v_1^2$ but $s^*v_2=s^*v_1^2$ for every section $s\colon K_1\to F$ of the 2-connected map $c \colon F \to K_1$.
\end{lemma}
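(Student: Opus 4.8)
The plan is to compute $H^*(F;\Z/2)$ in low degrees from the Serre spectral sequence of the fibration
\[ K(\Z/2,2) \longrightarrow F \xrightarrow{\ u\ } X, \]
which realises $u$ as the pullback of the path--loop fibration over $K(\Z/2,3)$ along $\iota_1^3+\iota_1\iota_2 \colon X \to K(\Z/2,3)$. As the base $K(\Z/2,3)$ of the path fibration is simply connected, the monodromy on the $\Z/2$-cohomology of the fibre is trivial, so $E_2^{p,q} \cong H^p(X;\Z/2)\otimes H^q(K(\Z/2,2);\Z/2)$ with untwisted coefficients. In the range we need, $H^*(K(\Z/2,2);\Z/2)$ has $H^0=\Z/2$, $H^1=0$, and $H^2=\Z/2\{\theta\}$ on the fundamental class $\theta$, and $\theta$ transgresses to the class classifying the fibration: $d_3(\theta)=\iota_1^3+\iota_1\iota_2 \in H^3(X;\Z/2)$.

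From here I would read off both cohomology groups. On the antidiagonal $p+q=1$ only $E_2^{1,0}=H^1(X)=\Z/2\{\iota_1\}$ is nonzero and it survives, giving $H^1(F)\cong\Z/2$ with generator $v_1=u^*\iota_1$. On the antidiagonal $p+q=2$ the entries are $E_2^{2,0}=H^2(X)=\Z/2\{\iota_1^2,\iota_2\}$, $E_2^{1,1}=0$, and $E_2^{0,2}=\Z/2\{\theta\}$. For dimension reasons nothing enters or leaves the corner $(2,0)$, so $E_\infty^{2,0}=H^2(X)$; meanwhile the transgression is injective on $E_3^{0,2}$, so $E_\infty^{0,2}=0$. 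Consequently the edge homomorphism $u^*\colon H^2(X)\to H^2(F)$ is an isomorphism, whence $H^2(F)\cong(\Z/2)^2$ and $\{v_1^2,v_2\}=\{u^*\iota_1^2,u^*\iota_2\}$ is a basis; in particular the four classes in $H^2(F)$ are $0$, $v_1^2$, $v_2$, and $v_1^2+v_2$.

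To single out $v_2$, I would analyse sections. Let $x_1\in H^1(K_1;\Z/2)$ be the generator, so that any section $s\colon K_1\to F$ of $c=p_1\circ u$ satisfies $s^*v_1=x_1$ and $s^*v_1^2=x_1^2$. Since $p_1\circ u=c$, the composite $u\circ s$ is a section of $p_1\colon X=K_1\times K_2\to K_1$, hence $(u\circ s)^*\iota_1=x_1$ and $(u\circ s)^*\iota_2=\alpha$ for some $\alpha\in H^2(K_1)=\Z/2\{x_1^2\}$. The defining relation $u^*(\iota_1^3+\iota_1\iota_2)=v_1^3+v_1v_2=0$ yields, after applying $s^*$, that $x_1^3+x_1\alpha=0$ in $H^3(K_1)$, forcing $\alpha=x_1^2$. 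Thus $s^*v_2=x_1^2=s^*v_1^2$ for every section, so $v_2$ meets both requirements. For uniqueness I would fix one section $s_0$ (available by the construction in \cref{prop:hom-groups-F}): then $s_0^*v_2=s_0^*v_1^2=x_1^2\neq 0$, so $0$ and $v_1^2+v_2$ both pull back to $0$ under $s_0^*$ and fail the second condition, while $v_1^2$ fails the first. Hence $v_2$ is the unique class with the stated properties.

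The only delicate point is the transgression step together with the conclusion that it kills $E_\infty^{0,2}$, so that $u^*$ is an \emph{isomorphism} (not merely a surjection) on $H^2$ and $\{v_1^2,v_2\}$ is genuinely a basis; an alternative route via the Borel spectral sequence of \cref{cor:wtF} would instead require showing that the corresponding $d_3$ vanishes, which is less transparent. Once the basis is identified, the section argument is an elementary finite check.
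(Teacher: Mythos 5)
Your proof is correct, but it reaches the group computations by a genuinely different route from the paper. The paper never runs the Serre spectral sequence of the defining fibration $K(\Z/2,2)\to F\xrightarrow{u}X$; instead it uses the homology Leray--Serre spectral sequence of $\wt F\to F\xrightarrow{c}K_1$, feeding in the equivariant identification $\wt F\simeq K_2\times K_2\times S^\infty$ from \cref{cor:wtF} and using the section coming from the trivial $k$-invariant to rule out differentials into $H_0(K_1;H_2(\wt F;\Z/2))$, while the facts $v_1\neq 0$ and $v_2\neq v_1^2$ are simply imported from \cref{prop:hom-groups-F} (whose proof of $v_2\neq v_1^2$ is an indirect universality argument quoting an external example). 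Your transgression computation is more self-contained: the two inputs you need are both sound --- the monodromy is trivial because the fibration is pulled back from the simply connected base $K(\Z/2,3)$, so the holonomy of $\pi_1(X)$ factors through $\pi_1(K(\Z/2,3))=0$ (note this is consistent with, and independent of, the nontrivial $\pi_1$-action on $\pi_2(F)\cong(\Z/2)[\Z/2]$, since the image of $\pi_2$ of the fibre is the diagonal submodule), and $d_3(\theta)=\iota_1^3+\iota_1\iota_2$ is the standard transgression for pullbacks of the path--loop fibration. The injectivity of $d_3$ on $E_3^{0,2}$ that you flag as the delicate point is indeed fine, since $E_3^{3,0}=E_2^{3,0}=H^3(X;\Z/2)$ because $E_2^{1,1}=0$, and $\iota_1^3+\iota_1\iota_2\neq 0$ there. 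Your approach buys the stronger statement that $u^*\colon H^2(X;\Z/2)\to H^2(F;\Z/2)$ is an isomorphism, which gives $v_1^2\neq 0$, $v_2\neq v_1^2$, and the basis $\{v_1^2,v_2\}$ for free, with no appeal to \cref{cor:wtF} or \cref{prop:hom-groups-F} beyond the existence of a section; the paper's route buys economy instead, since \cref{cor:wtF} and the $\wt F\to F\to K_1$ spectral sequence are needed anyway for the later arguments (\cref{cor:cohom-wtF}, \cref{cor:d4}). The section half of your argument is essentially the paper's: both apply $s^*$ to $v_1^3=v_1v_2$ and use that multiplication by $x_1$ is injective in the polynomial ring $H^*(K_1;\Z/2)$, and your uniqueness check --- evaluating the four elements of $H^2(F;\Z/2)$ against a fixed section $s_0$, whose existence you correctly source to \cref{prop:hom-groups-F} so that the ``for every section'' condition is non-vacuous --- matches the paper's four-element count.
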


\begin{proof}
	By \cref{prop:hom-groups-F}, $\pi_1(F)\cong \Z/2$ and thus $H^1(F;\Z/2)\cong \Z/2$. Again by \cref{prop:hom-groups-F}, $v_1\neq0$ and $v_2\neq v_1^2$.

    Looking at the Leray--Serre spectral sequence for $\wt{F} \to F \xrightarrow{c} K_1$ with $E^2_{p,q} \cong H_p(K_1;H_q(\wt F;\Z/2))$ and  converging to $H_{p+q}(F;\Z/2)$, analysing the $p+q=2$ anti-diagonal yields an exact sequence
    \[H_0(K_1;H_2(\wt F;\Z/2))\to H_2(F;\Z/2)\to H_2(K_1;\Z/2)\to 0.\]
    By \cref{cor:wtF},
	\[H_0(K_1;H_2(\wt F;\Z/2))\cong \Z/2\otimes_{(\Z/2)[\Z/2]}(\Z/2)[\Z/2]\cong \Z/2.\]
	Since the $k$-invariant of $F$ is trivial, there exists a section $s\colon K_1\to F$ and thus there are no differentials into $H_0(K_1;H_2(\wt F;\Z/2))$. Hence we have a short exact sequence $0 \to \Z/2 \to H_2(F;\Z/2) \to \Z/2 \to 0$ and so \[H^2(F;\Z/2)\cong H_2(F;\Z/2)\cong (\Z/2)^2,\] as claimed.

 Since $s\colon K_1\to F$  induces an isomorphism on fundamental groups, it induces an isomorphism on $H^1(-;\Z/2)$, and hence $s^*(v_1) \neq 0 \in H^1(K_1;\Z/2)$.

We saw that $v_2 \neq v_1^2$ in the proof of \cref{prop:hom-groups-F}, so it remains to show that $s^*v_2=s^*v_1^2 \in H^2(K_1;\Z/2)$ for every section $s$. So, fix a section $s \colon K_1\to F$. Since $v_1^3 = v_1v_2 \in H^3(F;\Z/2)$  we have $s^*(v_1^3)=s^*(v_1v_2)$, and therefore
\[
s^*(v_1)\big(s^*(v_1^2) + s^*(v_2)\big)=0 \in H^3(K_1;\Z/2).
\]
 Since $s^*v_1\neq 0 \in H^1(K_1;\Z/2)$ and $H^*(K_1;\Z/2)$ is a polynomial ring in a single variable ($K_1 \simeq \RP^\infty$), this implies that $s^*v_1^2 + s^*v_2=0$, and so $s^*v_1^2 = s^*v_2 \in H^2(K_1;\Z/2)$, as claimed.

 The uniqueness of $v_2$ satisfying this follows from the fact that $H^2(F;\Z/2)$ only has four elements: the element in one summand of $H^2(F;\Z/2) \cong \Z/2 \oplus \Z/2$ is determined by $s^*v_2 = s^*v_1^2$. This leaves two possibilities remaining, but one of them is $v_1^2$, and so the fact that $v_2 \neq v_1^2$ leaves a single element.
\end{proof}

As promised, we will show that $0=[K3]\in\Omega_4(\xi_F)$, where $\xi_F\colon B_F\to \BO$ is the fibration determined by $(F,v_1,v_2)$ as in \cref{defn:key-pullback}.
For this we will first compute the bordism group over $\wt{F}$, where $p\colon \wt{F}\to F$ is the universal cover. To do so, we first calculate $p^*v_2$.

\begin{lemma}
	\label{cor:cohom-wtF}
	Let $p\colon \wt F\to F$ be the universal covering. Under the homotopy equivalence from \cref{cor:wtF}, \[p^*v_2=(\iota_2,\iota_2)\in H^2(K_2;\Z/2) \oplus H^2(K_2;\Z/2)\cong H^2(K_2 \times K_2\times S^\infty;\Z/2)\cong H^2(\wt F;\Z/2).\]
\end{lemma}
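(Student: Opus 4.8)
The plan is to pin down $p^*v_2$ by combining two constraints---invariance under the deck transformation and nonvanishing---which together leave only one possibility. First I would compute $H^2(\wt F;\Z/2)$ explicitly through the equivariant equivalence of \cref{cor:wtF}. Since $S^\infty$ is contractible and $K_2 = K(\Z/2,2)$ is $1$-connected (so $H^1(K_2;\Z/2)=0$), the K\"unneth theorem collapses to $H^2(\wt F;\Z/2)\cong H^2(K_2;\Z/2)\oplus H^2(K_2;\Z/2)\cong(\Z/2)^2$, with the two generators being $(\iota_2,0)$ and $(0,\iota_2)$. Using that $\wt F$ is simply connected, Hurewicz and universal coefficients identify these with the basis of $\Hom(\pi_2(\wt F),\Z/2)\cong\Hom(\pi_2(F),\Z/2)$ dual to the two $\pi_2(K_2)$ summands.

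The first constraint is cheap: because $p\circ T = p$ for the deck transformation $T$, we have $T^*p^*v_2 = p^*v_2$, so $p^*v_2$ lies in the $T^*$-fixed subspace. By \cref{cor:wtF} the action $(x,y,z)\mapsto(y,x,-z)$ swaps the two $K_2$-factors, so $T^*$ interchanges $(\iota_2,0)$ and $(0,\iota_2)$; its fixed subspace is exactly $\{0,\,(\iota_2,\iota_2)\}$. Hence $p^*v_2\in\{0,(\iota_2,\iota_2)\}$, and it remains only to rule out $0$.

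Showing $p^*v_2\neq 0$ is the heart of the argument and the step I expect to require the most care. I would write $p^*v_2 = p^*u^*\iota_2 = (u\circ p)^*\iota_2$ and analyse it on $\pi_2$. Viewing $\iota_2\in H^2(X;\Z/2)\cong\Hom(\pi_2(X),\Z/2)$ as the projection $X\to K_2$, it corresponds to the isomorphism $(p_2)_*\colon\pi_2(X)\xrightarrow{\cong}\Z/2$; thus under the Hurewicz identifications, $(u\circ p)^*\iota_2$ is represented by the composite $\pi_2(\wt F)\xrightarrow{p_*}\pi_2(F)\xrightarrow{u_*}\pi_2(X)\xrightarrow{(p_2)_*}\Z/2$. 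Here $p_*$ is an isomorphism (universal cover) and $(p_2)_*$ is an isomorphism, while $u_*$ is surjective by the short exact sequence~\eqref{eqn:ses-for-pi2-F} coming from the fibration defining $F$. Therefore the composite is surjective, so $p^*v_2\neq 0$. (As an alternative avoiding the Hurewicz bookkeeping, one could lift the factor inclusion $i_2\colon K_2\to X$ first to $\tilde i_2\colon K_2\to F$, as in \cref{prop:hom-groups-F}, and then, since $K_2$ is simply connected, to $\hat i_2\colon K_2\to\wt F$; then $\hat i_2^*p^*v_2 = i_2^*\iota_2$ is the generator of $H^2(K_2;\Z/2)$, again forcing $p^*v_2\neq 0$.)

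Combining the two constraints gives $p^*v_2=(\iota_2,\iota_2)$, as claimed. The only genuine subtlety to check is that the basis of $H^2(\wt F;\Z/2)$ coming from the K\"unneth decomposition of \cref{cor:wtF} is the same (up to the swap) as the $\pi_2$-dual basis used in the nonvanishing computation; this follows because the equivalence of \cref{cor:wtF} is an equivalence of spaces and so induces compatible isomorphisms on $\pi_2$ and on $H^2(-;\Z/2)$.
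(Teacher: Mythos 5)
Your proof is correct, but it reaches the crucial nonvanishing step by a genuinely different route from the paper's. The paper extracts both constraints simultaneously from the exact sequence $H^2(K_1;\Z/2)\xrightarrow{c^*} H^2(F;\Z/2)\xrightarrow{p^*} H^0(K_1;H^2(\wt F;\Z/2))$ coming from the Leray--Serre spectral sequence of $\wt F \xrightarrow{p} F \xrightarrow{c} K_1$: the codomain of $p^*$ is the $\Z/2$-fixed subgroup of $H^2(\wt F;\Z/2)$ (your invariance constraint), while exactness gives $p^*v_2\neq 0$ because $v_2\notin\im c^*=\{0,v_1^2\}$ by \cref{lem:cohom-classes-F}. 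You instead obtain invariance for free from $p\circ T=p$, and prove nonvanishing by evaluating on $\pi_2$: the composite $\pi_2(\wt F)\xrightarrow{\cong}\pi_2(F)\xrightarrow{u_*}\pi_2(X)$ is surjective by \eqref{eqn:ses-for-pi2-F}, and $\iota_2$ pairs nontrivially with $\pi_2(X)$, so $p^*v_2=(u\circ p)^*\iota_2\neq 0$ via Hurewicz and universal coefficients on the simply connected $\wt F$. Your parenthetical alternative---lifting the factor inclusion $i_2\colon K_2\to X$ first through $F$ (as in \cref{prop:hom-groups-F}) and then, using simple connectivity of $K_2$, through $\wt F$, so that $\hat i_2^*p^*v_2=i_2^*\iota_2\neq 0$---is arguably the cleanest version, avoiding both the spectral sequence and the Hurewicz bookkeeping. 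What the paper's route buys is economy within its own framework: it reuses \cref{lem:cohom-classes-F} and the same Serre spectral sequence deployed elsewhere (e.g.\ in the proof of \cref{cor:d4}); what yours buys is elementarity and independence from the computation of $H^2(F;\Z/2)$, needing only the short exact sequence on $\pi_2$ (or the lift of $i_2$) already available from \cref{prop:hom-groups-F}. One small remark: your closing concern about matching the K\"unneth basis with the $\pi_2$-dual basis is actually moot, since nonvanishing is a basis-free statement and the fixed-subspace computation uses only the swap action from \cref{cor:wtF}, so no compatibility check is needed.
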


\begin{proof}
	Consider the exact sequence
	\begin{equation}\label{eqn:leray-serre-cohomology-for-F}
	H^2(K_1;\Z/2)\xrightarrow{c^*} H^2(F;\Z/2)\xrightarrow{p^*} H^0(K_1;H^2(\wt F;\Z/2)),
    \end{equation}
    which again  follows from the Leray--Serre spectral sequence for $\wt{F} \xrightarrow{p} F \xrightarrow{c} K_1$, this time the cohomology version.  Note that \[H^0\big(K_1;H^2(\wt F;\Z/2)\big) \cong H^0\big(\Z/2;H^2(K_2;\Z/2\big) \oplus H^2\big(K_2;\Z/2)\big) \]
    is isomorphic to the fixed subgroup of the coefficients $H^2(K_2;\Z/2) \oplus H^2(K_2;\Z/2)$ under the $\Z/2$-action $(x,y) \mapsto (y,x)$ from \cref{cor:wtF}.

	Since neither $v_2=0$ nor $v_2=v_1^2$ by \cref{lem:cohom-classes-F}, $v_2$ is not pulled back from $K_1$ under $c$.
    Hence by exactness of~\eqref{eqn:leray-serre-cohomology-for-F}, it follows that $p^*v_2\neq 0$. We must therefore have \[p^*v_2=(\iota_2,\iota_2)\in H^2(K_2;\Z/2) \oplus H^2(K_2;\Z/2),\] because this is the unique nontrivial fixed point of the $\Z/2$-action.
\end{proof}

We can now compute the bordism group $\Omega_4(\xi_{\wt{F}})$, where $\xi_{\wt F}\colon B_{\wt F}\to \BO$ is the fibration determined by $(\wt F,0,p^*v_2)$ as in \cref{defn:key-pullback}.
For this we will need to know the homology of $K_2$ in low degrees.

\begin{proposition}\leavevmode
\label{prop:homology-K2-low-degrees}
\begin{enumerate}
    \item\label{item:prop-K2-low-degrees-1}
    The homology $H^*(K_2;\Z/2)$ is a polynomial algebra generated by $\Sq^I(\iota_2)$, where $I$ ranges over the admissible sequences $(i_1,i_2,\dots,i_k)$ of excess less than two, and $\Sq^I := \Sq^{i_1} \circ \cdots \Sq^{i_k}$.
    \item\label{item:prop-K2-low-degrees-2} $H_1(K_2;\Z/2)=0$, $H_2(K_2;\Z/2) \cong H_3(K_2;\Z/2) \cong H_4(K_2;\Z/2) \cong \Z/2$, and $H_5(K_2;\Z/2) \cong \Z/2 \oplus \Z/2$.
\item\label{item:prop-K2-low-degrees-3} $H_1(K_2;\Z)=0$, $H_2(K_2;\Z) \cong \Z/2$, $H_3(K_2;\Z)=0$, and $H_4(K_2;\Z) \cong \Z/4$.
\end{enumerate}
\end{proposition}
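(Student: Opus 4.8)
The plan is to compute the cohomology and homology of $K_2 = K(\Z/2,2)$ in low degrees using the classical Serre computation of $H^*(K(\Z/2,n);\Z/2)$, and then extract the integral homology via the Bockstein spectral sequence (or equivalently by tracking the action of $\Sq^1$). Part \eqref{item:prop-K2-low-degrees-1} is simply a statement of Serre's theorem, so I would cite it and take it as the starting point for the explicit low-degree calculation.

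For \eqref{item:prop-K2-low-degrees-2}, I would list the admissible monomials $\Sq^I(\iota_2)$ of excess less than two through degree $5$. The relevant generators are $\iota_2$ (degree $2$), $\Sq^1\iota_2$ (degree $3$), $\Sq^2\iota_2 = \iota_2^2$ (degree $4$, but this is decomposable, so the new \emph{algebra} generator in degree $4$ is $\Sq^2\Sq^1\iota_2$), and in degree $5$ one finds $\Sq^2\Sq^1\iota_2$ together with the product $\iota_2 \cdot \Sq^1\iota_2$. Counting polynomial-algebra monomials in each degree then gives $\dim_{\Z/2} H^d(K_2;\Z/2)$ equal to $0,1,1,1,2$ for $d = 1,2,3,4,5$. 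Since these are $\Z/2$-vector spaces and $H_*(K_2;\Z/2) \cong H^*(K_2;\Z/2)$, part \eqref{item:prop-K2-low-degrees-2} follows immediately; I would note $H_1 = 0$ because $K_2$ is $1$-connected.

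For \eqref{item:prop-K2-low-degrees-3}, the cleanest route is to determine the $\Sq^1$-Bockstein action on the mod-$2$ cohomology classes just enumerated. One has $\Sq^1\iota_2 \neq 0$, which detects the $\Z/2$ in $H^3(K_2;\Z/2)$ coming from a $\Z/2$ summand in integral homology at the appropriate spot, and tracking which classes are Bockstein images versus Bockstein kernels pins down the integral groups. Concretely, $H_1(K_2;\Z) = 0$ by $1$-connectivity, $H_2(K_2;\Z) \cong \pi_2(K_2) \cong \Z/2$ by Hurewicz, and $H_3(K_2;\Z) = 0$ because the degree-$3$ mod-$2$ class $\Sq^1\iota_2$ is a Bockstein and hence supports no free or higher-torsion integral class. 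The assertion $H_4(K_2;\Z)\cong\Z/4$ requires showing the $4$-torsion rather than $(\Z/2)^2$; I would obtain this from the known Postnikov/$k$-invariant data for $K(\Z/2,2)$ or, equivalently, from the fact that the integral Bockstein of order two is nonzero on the relevant class, detected by $\Sq^2\Sq^1$ versus $\Sq^1$ relations.

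The main obstacle is the last point: distinguishing $\Z/4$ from $(\Z/2)^2$ in $H_4(K_2;\Z)$. The mod-$2$ Betti numbers alone are consistent with both, so the argument genuinely needs the higher Bockstein $\beta_r$ (or the precise integral cohomology $H^*(K(\Z/2,2);\Z)$, which is a standard but slightly delicate computation). I would handle this by invoking the known structure of $H^*(K(\Z/2,2);\Z)$ in low degrees, where the degree-$5$ integral cohomology contains a $\Z/4$ detected by the secondary Bockstein acting on $\Sq^2\Sq^1\iota_2$; by universal coefficients this $\Z/4$ in $H^5(K_2;\Z)$ is exactly the torsion subgroup reflecting $H_4(K_2;\Z)\cong\Z/4$. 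Everything else is bookkeeping with the Serre basis, so this single computation is where I would focus the care.
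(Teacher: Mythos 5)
Your proposal is correct and follows essentially the same route as the paper: part (1) is Serre's theorem (the paper cites Hatcher/Mosher--Tangora), part (2) is the same count of admissible monomials giving dimensions $0,1,1,1,2$ in degrees $1$ through $5$, and for part (3) the paper, like you, handles $H_1,H_2$ directly and simply cites the known integral computation (Clement's thesis, Appendix~C) for $H_3$ and $H_4$, which is in substance your appeal to the known structure of $H^*(K(\Z/2,2);\Z)$, with your Bockstein sketch correctly identifying that distinguishing $\Z/4$ from smaller or larger cyclic $2$-groups is the one genuinely delicate point. Two harmless slips you should fix in writing it up: $\Sq^2\Sq^1\iota_2$ has degree $5$, not $4$ (there is no new algebra generator in degree $4$; that degree is spanned by $\iota_2^2$), and the secondary Bockstein acts on the surviving class $[\iota_2^2]$, taking the value $[\Sq^2\Sq^1\iota_2+\iota_2\Sq^1\iota_2]$, rather than ``acting on $\Sq^2\Sq^1\iota_2$''.
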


\begin{proof}
For the statement of \eqref{item:prop-K2-low-degrees-1} we refer to \cite{Hatcher-alg-top}*{pp.~499-500} or \cite{Mosher-Tangora}*{p.~27}. Admissible means that $i_j\geq 2i_{j+1}$ for each $j$, the degree $d(I)$ is $\sum i_j$, and the excess is $2i_1 -d(I)$.

For \eqref{item:prop-K2-low-degrees-2}, the first and second homology groups are straightforward. For degrees higher than two we  deduce the homology from the cohomology given in \eqref{item:prop-K2-low-degrees-1}. The only admissible sequence of degree one yields $\Sq^1(\iota_2)$ and so $H^3(K_2;\Z/2) \cong \Z/2$. There are no degree two admissible sequences of excess less than two, and so $H^4(K_2;\Z/2) \cong \Z/2$ generated by $\iota_2^2$.  Finally, from degree three admissible sequences of excess less than two, we obtain $\Sq^2\Sq^1(\iota_2)$, and so $H^5(K_2;\Z/2) \cong (\Z/2)^2$  generated by $\{\iota_2\Sq^1\iota_2, \Sq^2\Sq^1(\iota_2)\}$.

For the integral homology in \eqref{item:prop-K2-low-degrees-3}, again the first and second homology groups are straightforward, and for the third and fourth homology we refer to \cite{clement}*{Appendix~C}.
\end{proof}

\begin{lemma}
	\label{lem:wtF-bordism}
	Let $Y_1$ and $Y_2$ be two copies of $K_2$, i.e.\ $Y_i:=K_2$ for $i=1,2$. Let $x_i$ be the generator of $H^2(Y_i;\Z/2)$, and let $Y:=Y_1\times Y_2\times S^\infty$.  For $i=1,2$ let $\xi_{Y_i} \colon B_{Y_i} \to \BO$ be the fibration determined by \cref{defn:key-pullback} and the data $(Y_i,0,x_i)$. Let $\xi_{Y} \colon B_Y \to \BO$ be the fibration determined by $(Y,0,x_1+x_2)$.
	\begin{enumerate}[(a)]
		\item\label{it:wtF-i} $\Omega_4(\xi_{Y_i})\cong\Omega_4^{SO}\cong \Z$.
		\item\label{it:wtF-ii} $\Omega_4(\xi_{Y})\cong (\Z\oplus\Z)/(4,-4)$, where the two summands are the images of $\Omega_4(\xi_{Y_i})\cong \Z$ for $i=1,2$.
		\item\label{it:wtF-iii} The homotopy equivalence $\wt{F}\simeq Y$ from \cref{cor:wtF} induces an isomorphism $\Omega_4(\xi_{\wt{F}})\cong \Omega_4(\xi_{Y})$.
        \item\label{it:wtF-iv} The image of $\Omega_4(\xi_{\wt{F}^{(2)}})$ in $\Omega_4(\xi_{\wt{F}})$ is infinite cyclic subgroup $(8\Z\oplus 8\Z)/(8,-8)$ in $(\Z\oplus\Z)/(4,-4)$, where $\wt{F}^{(2)}$ is the $2$-skeleton of $\wt F$ in any CW-structure of $F$.
        \item\label{it:wtF-v} The differential $d_3\colon E_{5,0}^3\to E_{2,2}^3$ in the James spectral sequence converging to $\Omega_4(\xi_Y)$ is trivial.
	\end{enumerate}
\end{lemma}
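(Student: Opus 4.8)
The plan is to dispatch the five items by a mixture of normal $1$-type bookkeeping, comparison with oriented and $\Spin^c$ bordism, and the James spectral sequence, reserving the genuine spectral-sequence work for \eqref{it:wtF-iv} and \eqref{it:wtF-v}.

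\textbf{Items \eqref{it:wtF-i} and \eqref{it:wtF-iii}.} For \eqref{it:wtF-i} I would unwind the pullback of \cref{defn:key-pullback}. Since $x_i\colon Y_i=K_2\to K(\Z/2,2)$ is a homotopy equivalence and $w_1=0$, a lift of $\nu_M$ to $B_{Y_i}$ is exactly an orientation of $\nu_M$ together with the (unique up to homotopy) map $M\to K_2$ classifying $w_2(\nu_M)$; equivalently $B_{Y_i}\simeq\BSO$ over $\BO$, so $\Omega_4(\xi_{Y_i})\cong\Omega_4^{SO}\cong\Z$, generated by $\CP^2$. Item \eqref{it:wtF-iii} is then immediate: $\wt F$ is simply connected, so $w_1=0$ on both sides, and by \cref{cor:cohom-wtF} the equivalence $\wt F\simeq Y$ of \cref{cor:wtF} carries $p^*v_2$ to $x_1+x_2$. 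Hence it is an equivalence of the defining $(w_1,w_2)$-data, the pullbacks $\xi_{\wt F}$ and $\xi_Y$ are fibre homotopy equivalent, and they induce the asserted isomorphism.

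\textbf{Item \eqref{it:wtF-ii}.} The same unwinding for $Y=K_2\times K_2$ with $w_2=x_1+x_2$ identifies a $\xi_Y$-structure on an oriented $M$ with the single free class $a:=g_1^*x_1\in H^2(M;\Z/2)$ (then $g_2$ is forced), so $\Omega_4(\xi_Y)\cong\Omega_4^{SO}(K_2)$. I would compute the latter by its Atiyah--Hirzebruch spectral sequence: since $\Omega_1^{SO}=\Omega_2^{SO}=\Omega_3^{SO}=0$, only $E^\infty_{0,4}\cong\Z$ and $E^\infty_{4,0}\cong H_4(K_2;\Z)\cong\Z/4$ survive (using \cref{prop:homology-K2-low-degrees}), so $\Omega_4^{SO}(K_2)\cong\Z\oplus\Z/4$, the torsion detected by the Pontryagin square $\langle\mathfrak P(a),[M]\rangle\in\Z/4$, which on $(\CP^2,a=w_2(\CP^2))$ equals $\langle h^2,[\CP^2]\rangle=1$. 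Under the two maps from \eqref{it:wtF-i} the generators go to $[\CP^2,\mathrm{const}]$ (signature $1$, torsion coordinate $0$) and $[\CP^2,w_2]$ (signature $1$, torsion coordinate $1$); a short calculation then shows they generate everything and that the kernel of $\Z^2\to\Omega_4(\xi_Y)$ is exactly $\langle(4,-4)\rangle$, giving $\Omega_4(\xi_Y)\cong(\Z\oplus\Z)/(4,-4)$.

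\textbf{Item \eqref{it:wtF-iv}.} Here $\wt F^{(2)}\simeq S^2\vee S^2$, and I would bound the image from both sides. Any $\xi_{\wt F^{(2)}}$-manifold has $w_2(\tau_M)=a+b$ with $a,b$ pulled back from the two $2$-spheres; choosing integral lifts $\tilde a,\tilde b$ (the generator of $H^2(S^2;\Z/2)$ lifts) gives a $\Spin^c$ structure with $c_1=\tilde a+\tilde b$ and $c_1^2=0$, so the index formula $\mathrm{ind}=(c_1^2-\sigma)/8$ forces $\sigma(M)\in8\Z$, while $\langle\mathfrak P(a),[M]\rangle=\langle\tilde a^2,[M]\rangle=0$; thus the image is contained in $\langle 8[\CP^2,\mathrm{const}]\rangle=(8\Z\oplus8\Z)/(8,-8)$. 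For the reverse inclusion I must realise signature $8$, i.e.\ show that in the James spectral sequence for $\wt F^{(2)}$ (where $E^\infty_{0,4}=16\Z$ and $E^\infty_{2,2}=(\Z/2)^2$) the extension is nontrivial, so that $[K3]$ is twice a filtration-$2$ class $\eta$ with $\sigma(\eta)=8$. I would do this by mapping to $\Spin^c$ bordism $\Omega_4(\xi_{\wt F^{(2)}})\to\Omega_4^{\Spin^c}\cong\Z^2$ (detected by $(\sigma,\mathrm{ind})$), where $[K3]=(16,-2)=2\,(8,-1)$ is divisible by two, and producing a $\Spin^c$ representative of $(8,-1)$ with $c_1$ spherical, hence defined over $\wt F^{(2)}$.

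\textbf{Item \eqref{it:wtF-v}.} Finally, to show $d_3\colon E^3_{5,0}\to E^3_{2,2}$ vanishes in the James spectral sequence for $\xi_Y$, I would use \cref{prop:differentials} to compute $d_2$. Because $\Sq^2_{0,x_1+x_2}(x_i)=x_1x_2\neq0$, the $d_2$ into $E_{2,2}$ has rank one, so $E^3_{2,2}\cong\Z/2$; computing the $d_2$ out of $E_{4,0}$ and into $E_{3,1}$ (each of rank one) gives $|E^\infty_{4,0}|=16$ and $|E^\infty_{3,1}|=2$. On the other hand \eqref{it:wtF-ii}--\eqref{it:wtF-iii} identify the abutment as $\Z\oplus\Z/4$ with $E^\infty_{0,4}=16\Z$, so the orders of the remaining $E^\infty$ terms on the total degree $4$ line multiply to $|(\Z/16)\oplus(\Z/4)|=64$; since $16\cdot2=32$ this forces $|E^\infty_{2,2}|=2$, i.e.\ $d_3$ is trivial. (Alternatively, naturality along $Y_i\hookrightarrow Y$ kills $d_3$ on the classes pulled back from the factors, where $E_{2,2}$ survives by \eqref{it:wtF-i}, and the order count disposes of the remaining cross $\Tor$-class.)

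\textbf{Main obstacle.} The routine parts are \eqref{it:wtF-i}, \eqref{it:wtF-iii}, and the upper bound in \eqref{it:wtF-iv}. The crux is the reverse inclusion in \eqref{it:wtF-iv}: resolving the hidden extension in the James spectral sequence for $S^2\vee S^2$, equivalently producing a smooth signature-$8$ manifold with spherical $w_2$ (a $\Spin^c$ representative of $(8,-1)$ with spherical $c_1$). The bookkeeping in \eqref{it:wtF-v} is then a matter of carefully computing the $d_2$ ranks and matching orders against \eqref{it:wtF-ii}.
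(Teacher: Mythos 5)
Your items \eqref{it:wtF-i} and \eqref{it:wtF-iii} match the paper, and for \eqref{it:wtF-ii} you take a genuinely different and valid route. The paper computes $\Omega_4(\xi_Y)$ entirely inside the James spectral sequence with spin coefficients, pinning down the relation $(\ell,-\ell)$ with $\ell\in\{4,8\}$ via a delicate $d_2\colon E^2_{5,0}\to E^2_{3,1}$ on the K\"unneth $\Tor$ summand of $H_5(Y;\Z)$; you instead shear off one $K_2$ factor (since $b$ is forced by $a+b=w_2$, the H-space structure of $K_2$ gives $B_Y\simeq \BSO\times K_2$ over $\BO$) and quote $\Omega_4^{SO}(K(\Z/2,2))\cong \Z\oplus\Z/4$, detected by signature and Pontryagin square. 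This is correct: $(\Z\oplus\Z)/(4,-4)\cong \Z\oplus\Z/4$, your generators map to $(1,1)$ and $(1,0)$, and the kernel of $\Z^2\to\Z\oplus\Z/4$ is indeed $\langle(4,-4)\rangle$. This buys \eqref{it:wtF-ii} without any $H_5$ computation, and your $\Spin^c$ index argument gives a clean upper bound in \eqref{it:wtF-iv}.

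However, there are genuine gaps. First, $\wt F^{(2)}\simeq S^2\vee S^2$ is false: $\wt F^{(2)}$ is the universal cover of $F^{(2)}$, so it carries a free $\Z/2$-action and has even Euler characteristic, whereas $\chi(S^2\vee S^2)=3$; it is a wedge of at least three $2$-spheres, depending on the CW structure. Your upper bound survives this (any class pulled back from a wedge of $2$-spheres has an integral lift of square zero), but the decisive step, the reverse inclusion in \eqref{it:wtF-iv}, you explicitly leave open: you never produce the $\Spin^c$ representative of $(8,-1)$ with spherical $c_1$. Such a manifold exists -- the orientation-reversed Enriques surface has $\sigma=8$ and torsion $c_1$, hence $c_1^2=0$, so its map to $\CP^\infty$ compresses into $\CP^1$ -- but the proposal does not supply it, and this was flagged by you as the crux. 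The paper avoids any such construction: since $\Omega_4(\xi_{Y_i})\cong\Z$ and $[\Z:16\Z]=16=2\cdot 2\cdot 4$ equals the product of the orders of $E^2_{2,2}$, $E^2_{3,1}$, $E^2_{4,0}$, every diagonal term survives, forcing the filtration $16\Z\leq 8\Z\leq 4\Z\leq \Z$ on each $\Omega_4(\xi_{Y_i})$; pushing both copies into $\Omega_4(\xi_Y)$ identifies $F_{2,2}=(8\Z\oplus 8\Z)/(8,-8)$ with no hidden-extension analysis. Second, in \eqref{it:wtF-v} your order count gives only $|E^\infty_{3,1}|\cdot|E^\infty_{2,2}|=4$, so you must know $|E^\infty_{3,1}|=2$, i.e.\ that $d_2\colon E^2_{5,0}\to E^2_{3,1}$ is nontrivial -- which you assert (``each of rank one'') but do not prove. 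This is exactly the subtle point: the dual of $\Sq^2_{0,x_1+x_2}$ on $H_5(Y;\Z/2)$ has rank two, and what matters is the image of the reduction $H_5(Y;\Z)\to H_5(Y;\Z/2)$; the paper handles this by showing the differential vanishes on the $H_5(Y_i;\Z)$ summands (citing Teichner's thesis) and computing it explicitly on the $\Tor$ class. Your parenthetical fallback cannot close this gap either: had $d_2$ vanished on the $\Tor$ class, the same order count would force $d_3\neq 0$, so the count is consistent with both outcomes and only the $d_2$-on-$\Tor$ computation decides.
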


\begin{proof}
	To prove part~\eqref{it:wtF-i}, we consider the pullback square in \cref{defn:key-pullback}, and augment it with another pullback square:
    \[\begin{tikzcd}[row sep = small]
        B_{Y_i} \ar[r] \ar[d] & K_2 \ar[d] \ar[r] & * \ar[d] \\
        \BO \ar[r,"w_1 \times w_2"] & K_1 \times K_2 \ar[r] & K_1.
    \end{tikzcd}\]
The concatenation of two pullback squares is again a pullback. Hence $B_{Y_i}$ is the pullback of the outer rectangle, which we recognise as $\BSO$. So $\Omega_4(\xi_{Y_i})\cong\Omega_4^{SO}\cong \Z$ as desired.  This proves \eqref{it:wtF-i}, but for later reference we show the entries $E^2_{p,q} \cong H_p(Y_i;\Omega_q^{\Spin})$ with $p+q=4$ in the James spectral sequence (\cref{remark:James-SS}) for $\Omega_4(\xi_{Y_i})$. The coefficients are untwisted due to the $0$ in $(Y_i,0,x_i)$, and for the computations of the homology groups we apply \cref{prop:homology-K2-low-degrees}.

\begin{center}
\begin{tikzpicture}[scale=0.8]

\draw[step=2.0,black,xshift=1cm,yshift=1cm] (0,0) grid (10.5, 5.5);

\draw (1,2) -- (11.5,2);
\draw (1,4) -- (11.5,4);
\draw (1,6) -- (11.5,6);

\node at (0.5,1.5) {$0$};
\node at (0.5,2.5) {$1$};
\node at (0.5,3.5) {$2$};
\node at (0.5,4.5) {$3$};
\node at (0.5,5.5) {$4$};
\node at (0.5,6.5) {$q$};

\node at (2,0.5) {$0$};
\node at (4,0.5) {$1$};
\node at (6,0.5) {$2$};
\node at (8,0.5) {$3$};
\node at (10,0.5) {$4$};
\node at (12,0.5) {$p$};

\node at (10,1.5) {$\Z/4$};

\node at (8,2.5) {$\Z/2$};

\node at (6,3.5) {$\Z/2$};

\node at (4,4.5) {$0$};

\node at (2,5.5) {$16\Z$};



\end{tikzpicture}
\end{center}

The term $E^2_{0,4} \cong H_0(K_2;\Omega_4^{\Spin})$ (with untwisted coefficients) is isomorphic to $16\Z$ via the signature. Since $\Omega_4^{\SO} \cong \Z$, generated by $\CP^2$, and again detected by the signature, we must have that all terms on the $E^2$ page with $p+q=4$ survive to the $E^\infty$ page and give the iterated graded groups of the filtration $16\Z \leq 8\Z \leq 4\Z \leq \Z$.

Next we show \eqref{it:wtF-ii}. For this we consider the James spectral sequence for the computation of $\Omega_4(\xi_Y)$.
	First note that by the Künneth theorem  for $k\leq 3$ we have $H_k(Y;\Z/2)\cong H_k(Y_1;\Z/2)\oplus H_k(Y_2;\Z/2)$, which vanishes for $k=1$ and is $\Z/2 \oplus \Z/2$ for $k=2,3$ by \cref{prop:homology-K2-low-degrees}. Furthermore again by the Künneth theorem  and \cref{prop:homology-K2-low-degrees}, \[H_4(Y;\Z)\cong H_4(Y_1;\Z)\oplus H_4(Y_2;\Z)\oplus H_2(Y_1;\Z)\otimes H_2(Y_2;\Z) \cong \Z/4 \oplus \Z/4 \oplus \Z/2.\]
We display the relevant groups in the $E^2_{p,q} \cong H_p(Y;\Omega_q^{\Spin})$ page next, together with the differentials that we will soon have to compute. There is again no twisting in the coefficients.

\begin{center}
\begin{tikzpicture}[scale=0.935]

\draw[step=2.0,black,xshift=1cm,yshift=1cm] (0,0) grid (12.5, 5.5);

\draw (1,2) -- (13.5,2);
\draw (1,4) -- (13.5,4);
\draw (1,6) -- (13.5,6);

\node at (0.5,1.5) {$0$};
\node at (0.5,2.5) {$1$};
\node at (0.5,3.5) {$2$};
\node at (0.5,4.5) {$3$};
\node at (0.5,5.5) {$4$};
\node at (0.5,6.5) {$q$};

\node at (2,0.5) {$0$};
\node at (4,0.5) {$1$};
\node at (6,0.5) {$2$};
\node at (8,0.5) {$3$};
\node at (10,0.5) {$4$};
\node at (12,0.5) {$5$};
\node at (14,0.5) {$p$};

\node at (10,1.7) {$(\Z/4)^2 \oplus$};
\node at (10,1.2) {$\Z/2$};
\node at (12,1.5) {$H_5(Y;\Z)$};

\node at (6,2.5) {$\Z/2 \oplus \Z/2$};
\node at (8,2.5) {$\Z/2 \oplus \Z/2$};
\node at (10,2.5) {$H_4(Y;\Z/2)$};

\node at (4,3.5) {$0$};
\node at (6,3.5) {$\Z/2 \oplus \Z/2$};

\node at (2,4.5) {$0$};
\node at (4,4.5) {$0$};

\node at (2,5.5) {$16\Z$};


\draw [thick, -latex](11.12,1.8) -- (8.88,2.4);
\draw [thick, -latex](9.12,2.72) -- (6.88,3.28);
\draw [thick, -latex](9.12,1.72) -- (6.88,2.28);
\end{tikzpicture}
\end{center}
    We compute the differential \[d_2 \colon E^2_{4,0} \cong \Z/4 \oplus \Z/4 \oplus \Z/2 \to E^2_{2,1} \cong \Z/2 \oplus \Z/2.\]
    By \cref{prop:differentials}~\eqref{prop:differentials-item-ii}, this is given as the composition
    \[H_4(Y;\Z) \to H_4(Y;\Z/2) \to H_2(Y;\Z/2)\]
    of reduction of coefficients modulo two, followed by the dual to the map $\Sq^2 + (x_1 +x_2)\cup- \colon H^2(Y;\Z/2) \to H^4(Y;\Z/2)$.
    By the Künneth theorem and  \cref{prop:homology-K2-low-degrees}, we have that $H_4(Y;\Z/2) \cong \Z/2 \oplus \Z/2 \oplus \Z/2$, and the reduction modulo two map \[H_4(Y;\Z) \cong \Z/4 \oplus \Z/4 \oplus \Z/2 \to H_4(Y;\Z/2) \cong \Z/2 \oplus \Z/2 \oplus \Z/2\] is the projection.
    To compute the second map in the composition, note that for both $x_i$, we have $\Sq^2(x_i)+(x_1+x_2)x_i=x_1x_2\in H^4(Y;\Z/2)$.
    Combining the two maps, we see that  the kernel of $d_2\colon H_4(Y;\Z)\to H_2(Y;\Z/2)$ is $H_4(Y_1;\Z)\oplus H_4(Y_2;\Z)\cong (\Z/4)^2$ and that the $\Z/2$ summand maps to $(1,1) \in \Z/2 \oplus \Z/2$.

    We deduce that  $\Omega_4(\xi_{Y_1})\oplus \Omega_4(\xi_{Y_2})\cong \Z \oplus \Z$ surjects onto $\Omega_4(\xi_{Y})$. To see this first note that the factor inclusion $Y_i \to Y$ does indeed induce a map $\Omega_4(\xi_{Y_i}) \to \Omega_4(\xi_{Y})$. In addition, other than the one we just analysed, there are no nontrivial differentials with domain the terms $E^k_{0,4}$, $E^k_{2,2}$, $E^k_{3,1}$, or $E^k_{4,0}$, for any $k \geq 2$.  The corresponding $E^{\infty}$ pages are therefore quotients of two copies of the terms on the $p+q=4$ line of the first displayed $E^2$ page above (the one from the James spectral sequence for $\Omega_4(\xi_{Y_i})$).

	The element $16\in 16\Z \leq \Z\cong \Omega_4(\xi_{Y_i})$ is represented by $K3$ for both $i$. Hence considering its image in $\Omega_4(\xi_{Y})$ we see that $(16,0)=(0,16) \in \Omega_4(\xi_{Y})$. Hence certainly $(16,-16)$ maps to zero in $\Omega_4(\xi_{Y})$. Since the signature gives a nontrivial homomorphism $\Omega_4(\xi_{Y}) \to \Z$, and there is one relation already, we  know $\Omega_4(\xi_{Y})$ has rank one.
    It follows that there exists some positive integer $\ell$ dividing $16$ such that $\Omega_4(\xi_Y)\cong \Z^2/\{(n\ell,-n\ell)\mid n\in \Z\}$.
	
	Since for both $x_i$, we have $\Sq^2(x_i)+(x_1+x_2)x_i=x_1x_2\in H^4(Y;\Z/2)$, the element $(1,1)\in H_2(Y;\Z/2) \cong \Z/2 \oplus \Z/2$ lies in the image of \[d_2\colon E^2_{4,1} \cong  H_4(Y;\Z/2)\to E^2_{2,2} \cong H_2(Y;\Z/2)\] by \cref{prop:differentials}~\eqref{prop:differentials-item-i}; this is essentially the same computation as that performed above to compute part of $d_2 \colon E^2_{4,0} \to E^2_{2,1}$. So $\ell \leq 8$.  There are no nontrivial differentials into $H_4(Y;\Z)$, and no further differentials out of it other than the one we computed. Thus $\ell \geq 4$. Hence $\ell=4$ or $\ell=8$, depending on whether the image of   \[d_2\colon E^2_{5,0} \cong H_5(Y;\Z)\to E^2_{3,1} \cong H_3(Y;\Z/2)\]
is nontrivial or trivial respectively.
	Let us  compute this differential. First note that
    \[H_5(Y;\Z)\cong H_5(Y_1;\Z)\oplus H_5(Y_2;\Z)\oplus \Tor_1(H_2(Y_1;\Z),H_2(Y_2;\Z))\]
    by the Künneth theorem. As shown in the proof of \cite{teichner-phd}*{Theorem~3.3.2~(2)}, all differentials vanishes on the $H_5(Y_i;\Z)$-summands. It remains to compute the differential on $\Tor_1(H_2(Y_1;\Z),H_2(Y_2;\Z)) \cong \Z/2$.

    We derive bases for the $\Z/2$-coefficient cohomology and homology of $Y$ using the Künneth theorem and \cref{prop:homology-K2-low-degrees}.
    For $H^3(Y;\Z/2)$, we consider the basis $\{\Sq^1(x_1),\Sq^1(x_2)\}$ and for $H^5(Y;\Z/2)$ we consider the basis	
	\[\{\Sq^2\Sq^1(x_1),x_1\Sq^1(x_1),x_2\Sq^1(x_1),x_1\Sq^1(x_2),
	x_2\Sq^1(x_2),\Sq^2\Sq^1(x_2)\}.\] For $H_3(Y;\Z/2)$ and $H_5(Y;\Z/2)$ we consider the dual bases, denoted e.g.\ $\{\Sq^1(x_1)^{\wedge},\Sq^1(x_2)^\wedge\}$.

    Again by \cref{prop:differentials}~\eqref{prop:differentials-item-ii}, we need to understand the effect of the reduction modulo two map $H_5(Y;\Z) \to H_5(Y;\Z/2)$ on the $\Tor$ summand, followed by the dual to $\Sq^2(-) + (x_1 + x_2)\cup -$.
    Since the generator of $\Z/2\cong \Tor_1(H_2(Y_1;\Z),H_2(Y_2;\Z))\leq H_5(Y;\Z)$ lies in the image of induced map on homology from $Y_1^{(3)}\times Y_2^{(3)}$, and  is invariant under the action of the automorphism interchanging $Y_1$ and $Y_2$, it follows that its image in $H_5(Y;\Z/2)$ under reduction modulo two is $(x_2\Sq^1(x_1))^{\wedge}+(x_1\Sq^1(x_2))^{\wedge}$.
	
	For the generators $\Sq^1(x_i)$ of $H^3(Y;\Z/2)$, we have \[\Sq^2\Sq^1(x_i)+(x_1+x_2)\Sq^1(x_i)=\Sq^2\Sq^1(x_i)+x_1\Sq^1(x_i)+x_2\Sq^1(x_i)\in H^5(Y;\Z/2).\]
	Computing the dual of this  map and applying it to $(x_2\Sq^1(x_1))^{\wedge}+(x_1\Sq^1(x_2))^{\wedge}$ shows that the image of $d_2\colon H_5(Y;\Z)\to H_3(Y;\Z/2) \cong \Z/2 \oplus \Z/2$ is the subgroup generated by $\Sq^1(x_1)^{\wedge}+\Sq^1(x_2)^{\wedge}$. Hence this differential is nontrivial, so $\ell=4$, and \eqref{it:wtF-ii} follows.

    Part \eqref{it:wtF-iii} follows from \cref{cor:cohom-wtF}.

Now we show \eqref{it:wtF-iv}.
    Since $H_2(\wt{F}^{(2)};\Z/2)\to H_2(\wt{F};\Z/2)$ is surjective, the image of $\Omega_4(\xi_{\wt{F}^{(2)}})$ in $\Omega_4(\xi_{\wt{F}})$ is the filtration step $F_{2,2}$ in the spectral sequence for the latter group.
    By the proof of \eqref{it:wtF-ii}, the filtration arising from the spectral sequence is
    \begin{equation}\label{eqn:filtration-groups}
        0 \leq 16\Z \cong \frac{16\Z \oplus 16\Z}{(16,-16)} \leq \frac{8\Z \oplus 8\Z}{(8,-8)} \leq \frac{4\Z \oplus 4\Z}{(4,-4)} \leq \frac{\Z \oplus \Z}{(4,-4)} = \Omega_4(\xi_{\wt{F}})
         \end{equation}
    Hence in particular $F_{2,2} \cong (8\Z\oplus8\Z)/(8,-8)$, which proves \eqref{it:wtF-iv}.

  Finally, we showed in the proof of \eqref{it:wtF-ii} that the kernel of $d_2\colon H_5(Y;\Z)\to H_2(Y;\Z/2)$ is $H_5(Y_1;\Z)\oplus H_5(Y_2;\Z)$.
   As mentioned above, it was shown in the proof of \cite{teichner-phd}*{Theorem~3.3.2~(2)} that all differentials, including the $d_3$ differentials, vanish on the $H_5(Y_i;\Z)$-summands. This shows \eqref{it:wtF-v}.
\end{proof}

The next lemma implies that $[K3]=0\in \Omega_4(\xi_F)$, in particular proving \cref{thm:vanishing-K3-in-Omega-4-F}. We will need the slightly stronger statement given for our computation of the $d_4$ differential later.

\begin{lemma}
	\label{lem:d4-nonzero}
	Let $j\colon F^{(4)}\to F$ be the inclusion of the $4$-skeleton. Let $\xi_{F^{(4)}} \colon B_{F^{(4)}} \to \BO$ be the fibration determined by $(F^{(4)},j^*v_1,j^*v_2)$. Then $[K3]=0\in \Omega_4(\xi_{F^{(4)}})$.
\end{lemma}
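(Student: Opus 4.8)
The plan is to run the James spectral sequence for $\Omega_4(\xi_{F^{(4)}})$ and show that the surviving generator of $E^2_{0,4}$ is killed. Exactly as in the proof of \cref{prop:strategy}, $[K3]$ represents the generator of $E^2_{0,4}\cong H_0(F;\Omega_4^{\Spin})\cong \Z/2$ (with coefficients twisted by $v_1$, following \cref{assumption}). The incoming differential $d_3\colon E^3_{3,2}\to E^3_{0,4}$ is dual to $v_1^3+v_1v_2$ by \cref{thm:d3-diff}, and this vanishes since $v_1^3=v_1v_2$ on $F$; moreover there is no incoming $d_5$, because $H_5(F^{(4)};-)=0$ as $F^{(4)}$ has no cells above dimension $4$. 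Hence the only possibly nontrivial differential into the $E^k_{0,4}$ column is $d_4\colon E^4_{4,1}\to E^4_{0,4}$, and $[K3]=0\in \Omega_4(\xi_{F^{(4)}})$ if and only if this $d_4$ is nontrivial.

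To access this $d_4$ I would lift to the universal cover. Write $p\colon \wt F^{(4)}\to F^{(4)}$ for the universal cover of the $4$-skeleton, which is the restriction of $p\colon \wt F\to F$ and equals $p^{-1}(F^{(4)})$. Since $K3$ is simply connected its normal $1$-smoothing lifts through $p$, so $[K3]=p_*[\wt{K3}]$ for a class $[\wt{K3}]\in\Omega_4(\xi_{\wt F^{(4)}})$, and under $\wt F^{(4)}\hookrightarrow \wt F$ this maps to $(16,0)=(0,16)\in\Omega_4(\xi_{\wt F})\cong(\Z\oplus\Z)/(4,-4)$ from \cref{lem:wtF-bordism}~\eqref{it:wtF-ii}. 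The role of \cref{lem:wtF-bordism}~\eqref{it:wtF-iv} is that the filtration-two class $(8,0)$, represented over the $2$-skeleton $\wt F^{(2)}$, satisfies $(16,0)=(8,0)+(0,8)$ with $(0,8)=t_*(8,0)$, where $t$ is the deck transformation swapping the two $K_2$-factors by \cref{cor:wtF}. Since $p\circ t=p$ gives $p_*t_*=p_*$, pushing down yields $[K3]=2\beta$, where $\beta:=p_*(8,0)$ is represented by a manifold whose structure map factors through $F^{(2)}$ and pulls $v_1$ back to zero, i.e. is orientable over $F$.

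It then remains to see $2\beta=0$, and here the hypothesis $v_1\neq 0$ enters decisively. Because $\beta$ is orientable over $F$ while $F$ carries an orientation-reversing loop $\gamma$ with $v_1(\gamma)\neq 0$, dragging a representative of $\beta$ around $\gamma$ produces, through the $v_1$-twisting of the coefficients fixed in \cref{assumption}, a bordism over $F$ identifying $\beta$ with $-\beta$; hence $2\beta=0$ and $[K3]=0$. (This is consistent with the fact that $[K3]\neq 0$ over the $2$-skeleton alone: there the relation underpinning $[K3]=2\beta$ is not yet available, so no contradiction arises.) Crucially, the loop $\gamma$ is $1$-dimensional and the representatives factor through $F^{(2)}$, so the whole argument lives in skeleta of dimension at most $4$, which is what lets the conclusion hold over $F^{(4)}$ rather than only over $F$.

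The main obstacle is precisely this bookkeeping of skeleta. The identity $[K3]=2\beta$ and the relation $(8,0)=(0,8)$ are on their face statements in $\Omega_4(\xi_{\wt F})$, whose derivation in \cref{lem:wtF-bordism} used the $5$-cells of $\wt F$ (the $d_2$ differential out of $H_5$); I would need to check that the specific inputs actually required—namely the image computation \cref{lem:wtF-bordism}~\eqref{it:wtF-iv} and the vanishing $d_3$ of \cref{lem:wtF-bordism}~\eqref{it:wtF-v}, which guarantees $(8,0)$ genuinely is the filtration-two class it is claimed to be—already suffice over $\wt F^{(4)}$. Equivalently, one must verify that the doubling $\beta\mapsto 2\beta$ really detects the $d_4$ differential over $F^{(4)}$ and is not absorbed into a hidden extension. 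This interplay between the factor-swapping deck transformation and the $w_1$-twisting, controlled by parts \eqref{it:wtF-iv} and \eqref{it:wtF-v} of \cref{lem:wtF-bordism}, is the delicate heart of the proof.
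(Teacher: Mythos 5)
Your strategy is in outline the paper's own --- lift to the universal cover, use \cref{lem:wtF-bordism}~\eqref{it:wtF-iv} to identify the image of $\Omega_4(\xi_{\wt F^{(2)}})$ as $(8\Z\oplus 8\Z)/(8,-8)$, and push down along $p$ using $p_*t_*=p_*$ --- but the central computation has a genuine gap. The deck transformation acts on this group by $(z,z')\mapsto(-z',-z)$: it simultaneously swaps the two summands \emph{and} reverses orientation, since $v_1$ is nontrivial on the generator of $\pi_1(F)$. Hence $t_*(8,0)=(0,-8)$, not $(0,8)$ as you write, and your attempt to recover the missing sign afterwards by ``dragging a representative of $\beta$ around $\gamma$'' is not legitimate: the swap and the sign are two facets of one and the same deck transformation, so the only relation the geometry provides is the composite $p_*t_*=p_*$; imposing ``pure swap'' and ``pure negation'' as two independent identifications is strictly stronger than that. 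A concrete test shows your mechanism cannot be right: it never uses the relation $(8,-8)=0$. If the image were $8\Z\oplus 8\Z$ with no relation, the coinvariants under $(z,z')\mapsto(-z',-z)$ would be isomorphic to $8\Z$ via $(a,b)\mapsto a-b$, and $(16,0)$ would survive as $16\neq 0$ --- yet your two steps would still ``prove'' $[K3]=2\beta$ and $\beta=-\beta$. What actually kills the class is precisely the relation $(8,-8)=0$ (which traces back to the nontrivial $d_2$ on the $\Tor$-summand of $H_5(Y;\Z)$ in the proof of \cref{lem:wtF-bordism}~\eqref{it:wtF-ii}): in the coinvariants one computes, as the paper does, $(16,0)=(8,8)=(8,0)+(0,8)=(8,0)+(-8,0)=0$; equivalently $2\beta=p_*(8,0)+p_*t_*(8,0)=p_*(8,-8)=0$.

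Second, the skeletal bookkeeping that you explicitly leave open (``I would need to check\dots'') is not a routine verification but the other essential half of the proof, and the paper resolves it with \cref{lem:wtF-bordism}~\eqref{it:wtF-v}: the vanishing of $d_3\colon E^3_{5,0}\to E^3_{2,2}$ implies that $\Omega_4(\xi_{\wt F^{(4)}})\to \Omega_4(\xi_{\wt F})$ restricts to an isomorphism between the images of $\Omega_4(\xi_{\wt F^{(2)}})$, which transports the group $(8\Z\oplus8\Z)/(8,-8)$ --- including the indispensable relation $(8,-8)=0$ --- and the deck action into $\Omega_4(\xi_{\wt F^{(4)}})$, where $p_*$ then factors through the coinvariants because deck transformations commute with the covering. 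You correctly name \eqref{it:wtF-iv} and \eqref{it:wtF-v} as the needed inputs, but without carrying out this deduction the argument only addresses $F$, not $F^{(4)}$, which is the whole point of the lemma. (Your opening spectral-sequence framing is harmless but unnecessary: the paper proves $[K3]=0$ directly and only afterwards deduces the nontriviality of $d_4$, in \cref{cor:d4-nontrivial-F}.)
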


\begin{proof}
    Choose a CW structure on $F$ and let $\wt F^{(2)}$ and $\wt F^{(4)}$ denote the corresponding $2$- and $4$-skeleta of the universal cover, respectively. Let $k\colon \wt F^{(2)} \to \wt F^{(4)}$ denote the inclusion map.
    Consider the map $j_* \colon \Omega_4(\xi_{\wt{F}^{(4)}})\to \Omega_4(\xi_{\wt{F}})$, and let $j_*|$ denote its restriction to the image of $\Omega_4(\xi_{\wt{F}^{(2)}})$ in domain and codomain.
   It follows from the vanishing of
    $d_3\colon E_{5,0}^3\to E_{2,2}^3$, shown in \cref{lem:wtF-bordism} \eqref{it:wtF-v}, that $j_*|$ is an isomorphism.
By  \cref{lem:wtF-bordism} \eqref{it:wtF-iv},  \[\im \Omega_4(\xi_{\wt{F}^{(2)}}) \cong (8\Z\oplus8\Z)/(8,-8).\]

The class $[K3]$ lies in the image of $\Omega_4(\xi_{\wt{F}^{(2)}})$ and is represented by $(16,0)$. By \cref{cor:wtF,lem:wtF-bordism}, the action of $\Z/2=\pi_1(F)$ on $\Omega_4(\xi_{\wt{F}}) \cong \Omega_4(\xi_{Y})\cong \Z^2/(4,-4)$ interchanges the summands. Moreover, since $v_1$ is nontrivial on the generator of $\pi_1(F)$, acting by this generator changes the orientation. So altogether the action sends $(z,z')\mapsto (-z',-z)$. The deck transformation of $\wt F$ restricts to $\wt{F}^{(2)}$ and $\wt{F}^{(4)}$. Hence on the image $(8\Z\oplus8\Z)/(8,-8)$ of $\Omega_4(\xi_{\wt{F}^{(2)}})$ in $\Omega_4(\xi_{\wt{F}^{(4)}})$ the deck transformation acts by sending $(-8,0)$ to $(0,8)$.

    We consider the composition $\wt F^{(2)}\xrightarrow{k} \wt F^{(4)}\xrightarrow{p} F^{(4)}$ and the induced maps on bordism groups
    \[\Omega_4(\xi_{\wt{F}^{(2)}})\xrightarrow{k_*} \Omega_4(\xi_{\wt{F}^{(4)}})\xrightarrow{p_*} \Omega_4(\xi_{F^{(4)}}).\]
    In particular this composition factors through $p_*| \colon \im k_* \to \Omega_4(\xi_{F^{(4)}})$.
    Since  deck transformations commute with $p \colon \wt{F}^{(4)} \to F^{(4)}$, $p_*|$  factors through the quotient of $\im k_*$ by the $\Z/2$ deck transformation action, yielding
    \[(8\Z\oplus8\Z)/(8,-8) \cong \im k_* \to \Z \otimes_{\Z[\Z/2]} \im k_* \to \Omega_4(\xi_{F^{(4)}}).\]
  The class $[K3]$ is represented by $(16,0) \in (8\Z\oplus8\Z)/(8,-8)$,
     but in the quotient of $(8\Z\oplus8\Z)/(8,-8)$ by the action sending $(-8,0)$ to $(0,8)$ we have
    \[(16,0)=(8,8)=(8,0)+(0,8)=(8,0)+(-8,0)=(0,0),\]
    where the first equality is given by subtracting the trivial element $(8,-8)$, and the penultimate equality uses the deck transformation.
     Hence $[K3]$  vanishes in $\Omega_4(\xi_{F^{(4)}})$,  as claimed.
\end{proof}

\begin{corollary}\label{cor:d4-nontrivial-F}
    The  differential $d_4 \colon H_4(F;\Z/2) \to H_0(F;\Omega_4^{\Spin})$ in the James spectral sequence computing $\Omega_4(\xi_F)$ is nontrivial.
\end{corollary}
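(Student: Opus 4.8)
The plan is to deduce the claim from \cref{lem:d4-nonzero} by running the whole argument inside the James spectral sequence for the \emph{4-skeleton} $F^{(4)}$, where the $d_5$ differential into the bottom row is automatically killed, and then transporting the conclusion to $F$ by naturality. The reason for working on $F^{(4)}$ rather than directly on $F$ is explained at the end.

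First I would record what can hit the edge term $E_{0,4}$ in the spectral sequence converging to $\Omega_4(\xi_{F^{(4)}})$. The possible incoming differentials are $d_2$ from $E_{2,3}$, $d_3$ from $E_{3,2}$, $d_4$ from $E_{4,1}$, and $d_5$ from $E_{5,0}$. Three of these vanish for cheap reasons: $E^2_{2,3}=H_2(F^{(4)};\Omega_3^{\Spin})=0$ since $\Omega_3^{\Spin}=0$; $E^2_{5,0}=H_5(F^{(4)};\Z^{w_1})=0$ since $F^{(4)}$ is $4$-dimensional (this is precisely the point of passing to the skeleton); and the $d_3$ differential is dual to the pullback of $v_1^3+v_1v_2$ along $F^{(4)}\to X$, which vanishes by construction, so by the same naturality computation as in the proof of \cref{thm:d3-diff} it is trivial. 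On the other hand, since $K3$ is simply connected it represents a class lying in the bottom filtration and generating $E^\infty_{0,4}$, exactly as in \cref{prop:strategy}~\eqref{item:i}, and \cref{lem:d4-nonzero} asserts that this class vanishes in $\Omega_4(\xi_{F^{(4)}})$. Hence $E^\infty_{0,4}=0$, and the only differential left to do the killing is $d_4$. It follows that the differential $d_4\colon E^4_{4,1}\to E^4_{0,4}\cong\Z/2$ is surjective, in particular nontrivial, in the spectral sequence for $\Omega_4(\xi_{F^{(4)}})$.

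Finally I would transfer this to $F$. The skeletal inclusion $j\colon F^{(4)}\to F$ covers a map $B_{F^{(4)}}\to B_F$ over $\BO$, hence induces a map of James spectral sequences compatible with all differentials on every page. On the base row $E_{0,4}=H_0(-;\Omega_4^{\Spin})$ the induced map is an isomorphism $\Z/2\to\Z/2$, since this term depends only on the $\pi_1$-action and $j$ is a $\pi_1$-isomorphism; as the lower differentials into $E_{0,4}$ are trivial for both spaces, it remains an isomorphism on the $E^4$ page. Naturality then yields a commutative square relating $d_4$ for $F^{(4)}$ and $d_4$ for $F$ through the maps $j_*$, with an isomorphism along the bottom. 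Since $d_4$ for $F^{(4)}$ is nonzero, the composite $j_*\circ d_4^{F^{(4)}}=d_4^{F}\circ j_*$ is nonzero, forcing $d_4^{F}$ to be nontrivial, as required.

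The substantive input is \cref{lem:d4-nonzero} (which rests in turn on the skeletal bordism computations of \cref{lem:wtF-bordism}); granting it, this corollary is essentially bookkeeping in the spectral sequence. The point that genuinely needs care—and the reason the argument is phrased on $F^{(4)}$ rather than on $F$—is the $d_5$ differential: over $F$ there is no \emph{a priori} reason for $d_5$ to vanish, so one cannot conclude nontriviality of $d_4^{F}$ from $[K3]=0\in\Omega_4(\xi_F)$ alone. Indeed the space $G$ studied in the later sections is constructed precisely so that an analogous $d_5$ is nonzero. Passing to the $4$-skeleton removes $E_{5,0}$ and isolates $d_4$ as the unique possible killer of the bottom class, which is the crux of the deduction.
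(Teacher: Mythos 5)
Your proposal is correct and follows essentially the same route as the paper's proof: pass to the $4$-skeleton $F^{(4)}$ so that $E^2_{5,0}=H_5(F^{(4)};\Z)=0$, kill $d_3$ via \cref{thm:d3-diff} since $j^*(v_1^3+v_1v_2)=0$, use \cref{lem:d4-nonzero} to force $E^\infty_{0,4}=0$ and hence $d_4^{F^{(4)}}\neq 0$, then transfer to $F$ by naturality. The only cosmetic difference is that you invoke the isomorphism $j_*$ on $E^4_{0,4}$ where the paper cites surjectivity of $j_*$ on $H_4(-;\Z/2)$; both are equivalent instances of the same naturality square.
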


\begin{proof}
    By \cref{thm:d3-diff}, the differential $d_3 \colon H_3(F^{(4)};\Z/2) \to H_0(F^{(4)};\Omega_4^{\Spin})=E^3_{0,4}\cong \Z/2$ in the James spectral sequence for $F^{(4)}$ is dual to $j^*v_1^3+j^*v_1j^*v_2=j^*(v_1^3+v_1v_2)=0$. Hence $E_{0,4}^4 \cong \Z/2$.
    By \cref{lem:d4-nonzero}, $[K3]$ vanishes in $\Omega_4(\xi_{F^{(4)}})$ and hence $E_{0,4}^\infty=0$.
    Since $H_5(F^{(4)};\Z)=0$, it follows that the differential $d_4 \colon H_4(F^{(4)};\Z/2) \to H_0(F^{(4)};\Omega_4^{\Spin})$ must be nontrivial. Since $j_*\colon H_4(F^{(4)};\Z/2)\to H_4(F;\Z/2)$ is surjective, the differential $d_4 \colon H_4(F;\Z/2) \to H_0(F;\Omega_4^{\Spin})$ is also nontrivial by naturality of the James spectral sequence.
\end{proof}

\section{Computing the \texorpdfstring{$d_4$}{d4} differential}\label{section:determining-d-4-diff}

Now we know that there is a nonzero $d_4$ differential in the spectral sequence for the universal space $F$ corresponding to $w_1^3 = w_1w_2$ from \eqref{eqn:defn-of-F}, our next aim is to compute this differential.

  Let $V$ be a space and let $w_i\in H^i(V;\Z/2)$ for $i=1,2$.
Define
\[\Sq^2_{w_1,w_2}:= \Sq^2(-)+w_1\Sq^1(-)+w_2\cup -\colon H^2(V,\Z/2)\to H^4(V;\Z/2),\]
and let
\[\Sq_2^{w_1,w_2}\colon H_4(V;\Z/2)\to H_2(V;\Z/2)\] denote its dual.

\begin{lemma}
	\label{lem:trivial-differential}
For $v_1$ and $v_2$ as in \eqref{defn:v-i},	the map
	\[\Sq^2_{v_1,v_2} \colon H^2(F;\Z/2)\to H^4(F;\Z/2)\]
	is trivial.
\end{lemma}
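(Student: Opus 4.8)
The plan is to verify that $\Sq^2_{v_1,v_2}$ kills a basis of $H^2(F;\Z/2)$. By \cref{lem:cohom-classes-F} we have $H^2(F;\Z/2)\cong(\Z/2)^2$, whose nonzero vectors are $v_1^2$, $v_2$, and $v_1^2+v_2$. Since $v_1^2\neq 0$ (a section $s\colon K_1\to F$ satisfies $s^*v_1\neq 0$, hence $s^*(v_1^2)=(s^*v_1)^2\neq 0$), $v_2\neq 0$, and $v_2\neq v_1^2$, the two distinct nonzero classes $v_1^2$ and $v_2$ are linearly independent in a two-dimensional $\F_2$-space and thus form a basis. So it suffices to show $\Sq^2_{v_1,v_2}(v_1^2)=0$ and $\Sq^2_{v_1,v_2}(v_2)=0$. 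Throughout I would exploit the defining relation $v_1^3=v_1v_2$ from~\eqref{defn:v-i}, together with its immediate consequence $v_1^4=v_1\cdot v_1^3=v_1\cdot v_1v_2=v_1^2v_2$.

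For the class $v_1^2$, I would expand
\[\Sq^2_{v_1,v_2}(v_1^2)=\Sq^2(v_1^2)+v_1\Sq^1(v_1^2)+v_2v_1^2.\]
Here $\Sq^2(v_1^2)=(v_1^2)^2=v_1^4$ since $v_1^2$ has degree $2$, while $\Sq^1(v_1^2)=0$ because $\Sq^1$ of a square vanishes in characteristic two. Hence the expression reduces to $v_1^4+v_1^2v_2$, which is zero by the identity $v_1^4=v_1^2v_2$ noted above.

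For the class $v_2$, the expansion is
\[\Sq^2_{v_1,v_2}(v_2)=\Sq^2(v_2)+v_1\Sq^1(v_2)+v_2^2=v_2^2+v_1\Sq^1(v_2)+v_2^2=v_1\Sq^1(v_2),\]
using $\Sq^2(v_2)=v_2^2$. The surviving cross term $v_1\Sq^1(v_2)$ is the crux of the argument and the one place where it is not purely formal. The key idea I would use is to apply the derivation $\Sq^1$ to the defining relation $v_1^3=v_1v_2$. Since $\Sq^1(v_1^3)=v_1^4$ and $\Sq^1(v_1v_2)=\Sq^1(v_1)\,v_2+v_1\Sq^1(v_2)=v_1^2v_2+v_1\Sq^1(v_2)$, this yields $v_1^4=v_1^2v_2+v_1\Sq^1(v_2)$; combined once more with $v_1^4=v_1^2v_2$ it forces $v_1\Sq^1(v_2)=0$.

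The main obstacle is precisely this cross term $v_1\Sq^1(v_2)$: unlike the other summands it is not visibly a square or a product manufactured from the relation, so some genuine input is needed to evaluate it. Differentiating the relation $v_1^3=v_1v_2$ resolves it cleanly and avoids any spectral sequence computation. If one preferred a more structural route, one could instead locate $v_1\Sq^1(v_2)$ by restriction: it pulls back to $0$ on the universal cover $\wt F$ since $p^*v_1=0$, and a short Leray--Serre argument shows $\ker(p^*)$ in degree four is spanned by $v_1^4$, which is separated from $0$ by the section because $s^*(v_1^4)=(s^*v_1)^4\neq 0$ while $s^*(v_1\Sq^1 v_2)=s^*v_1\cdot\Sq^1(s^*v_2)=x\cdot\Sq^1(x^2)=0$; but the computation via $\Sq^1$ of the relation is shorter. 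Having shown $\Sq^2_{v_1,v_2}$ vanishes on both basis elements, it vanishes identically.
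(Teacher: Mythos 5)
Your proof is correct and is essentially the paper's own argument: both evaluate $\Sq^2_{v_1,v_2}$ on the generators $v_1^2$ and $v_2$ of $H^2(F;\Z/2)$, and both hinge on the same key step of applying $\Sq^1$ to the relation $v_1^3=v_1v_2$ to deduce $v_1\Sq^1(v_2)=0$. The only difference is cosmetic: you spell out why $\{v_1^2,v_2\}$ is a basis via a section $s\colon K_1\to F$, which the paper leaves implicit in \cref{lem:cohom-classes-F}.
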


\begin{proof}
	By \cref{lem:cohom-classes-F}, $H^2(F;\Z/2)$ is generated by $v_1^2$ and $v_2$.
	Since $v_1^3=v_1v_2$,
	\[v_1^4=\Sq^1(v_1^3)=\Sq^1(v_1v_2)=v_1^2v_2+v_1\Sq^1(v_2)=v_1^4+v_1\Sq^1(v_2).\]
	Thus $v_1\Sq^1(v_2)=0$. Using this and again that $v_1^3=v_1v_2$, we have
	\[\Sq^2(v_1^2)+v_1\Sq^1(v_1^2)+v_2v_1^2=v_1^4+0+v_1^2v_2= v_1(v_1^3 + v_1v_2) =0,\]
	and
	\[\Sq^2(v_2)+v_1\Sq^1(v_2)+v_2^2=v_1\Sq^1(v_2)=0.\qedhere\]
\end{proof}

By \cref{prop:differentials} and \cref{lem:trivial-differential}, the differential  $d_2 \colon H_4(F;\Omega_1^{\spin}) \to H_2(F;\Omega_2^{\Spin})$ is trivial. Hence the $d_4$ differential $d_4 \colon E^4_{4,1} \to E^4_{0,4}$ has domain $H_4(F;\Omega_1^{\spin})/\im d_2^F$, where $d_2^F \colon E_{6,0}^2\to E_{4,1}^2$.

\begin{definition}\label{defn:mathfrak-o}
    We define $\mathfrak{o}\in H^4(F;\Z/2)$ to be the cohomology class dual, in the sense of \cref{thm:d3-diff}, to
	\[H_4(F;\Z/2)\twoheadrightarrow H_4(F;\Z/2)/\im d_2^F\xrightarrow{d_4^F}H_0(F;\Omega_4^{\spin})\cong \Z/2.\]
    Equivalently, this composition yields $\mathfrak{o}$ under the identification $\Hom(H_4(F;\Z/2),\Z/2) \cong H^4(F;\Z/2)$.
\end{definition}

\begin{proposition}
	\label{prop:choice-of-lift}
     A triple $(V,w_1,w_2)$ determines a fibration $\xi_V \colon B_V \to \BO$ as in \cref{defn:key-pullback}, with a corresponding James spectral sequence for $\Omega_4(\xi_V)$.  Suppose that $w_1^3=w_1w_2\in H^3(V;\Z/2)$.
    Let $f\colon V \to F$ be a lift of $(w_1,w_2)\colon V \to X$ along $u \colon F \to X$.
	The class
	\[[f^*\mathfrak{o}]\in H^4(V;\Z/2)/\im(\Sq^2_{w_1,w_2} \colon H^2(V,\Z/2)\to H^4(V;\Z/2))\]
	is dual to the composition
	\[\ker(\Sq_2^{w_1,w_2})\twoheadrightarrow \ker(\Sq_2^{w_1,w_2})/\im d_2^V\xrightarrow{d_4^V}H_0(V;\Omega_4^{\spin})\twoheadrightarrow \Z/2.\]
	In particular, the class $[f^*\mathfrak{o}]$ is independent of the choice of lift $f$.
\end{proposition}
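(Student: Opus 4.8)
The plan is to exploit the naturality of the James spectral sequence with respect to the map $f \colon V \to F$, turning the abstract definition of $\mathfrak{o}$ in $H^4(F;\Z/2)$ (as the dual of the $d_4^F$ differential) into a statement about the $d_4^V$ differential for $V$. The key organising principle is the commutative diagram of spectral sequences induced by $f$, combined with the fact (\cref{lem:trivial-differential}, applied over $F$, and \cref{prop:differentials} over $V$) that the $d_2$ differentials identify the relevant $E^4$ terms with $\ker(\Sq_2^{w_1,w_2})$ modulo the image of $d_2$. The claim that $[f^*\mathfrak{o}]$ is independent of the lift will then follow \emph{for free}, because the right-hand side of the asserted duality makes no reference to $f$ at all --- it is phrased entirely in terms of the intrinsic data $(V,w_1,w_2)$ and the differential $d_4^V$.

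First I would set up the naturality square for the map $f$: the induced map $f_* \colon \Omega_4(\xi_V) \to \Omega_4(\xi_F)$ comes from a map of pullback fibrations $B_V \to B_F$ over $\BO$ (since $f$ covers $(w_1,w_2) = u \circ f$ and $v_i = u^*\iota_i$ pull back to $w_i$), and hence a morphism of James spectral sequences. On the $E^2$ page this is the map $f_* \colon H_p(V;\Omega_q^{\Spin}) \to H_p(F;\Omega_q^{\Spin})$ induced on homology. Crucially, $f^* v_i = w_i$, so $f^*$ intertwines $\Sq^2_{v_1,v_2}$ on $F$ with $\Sq^2_{w_1,w_2}$ on $V$; dually $f_*$ carries $\ker(\Sq_2^{w_1,w_2}) \subseteq H_4(V;\Z/2)$ into the analogous kernel over $F$, which by \cref{lem:trivial-differential} is all of $H_4(F;\Z/2)/\im d_2^F$. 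This is what lets the diagram descend to the $E^4$ page as a commuting square
\[
\begin{tikzcd}[column sep=large]
\ker(\Sq_2^{w_1,w_2})/\im d_2^V \ar[r,"f_*"] \ar[d,"d_4^V"'] & H_4(F;\Z/2)/\im d_2^F \ar[d,"d_4^F"] \\
H_0(V;\Omega_4^{\Spin}) \ar[r,"f_*"] & H_0(F;\Omega_4^{\Spin}).
\end{tikzcd}
\]

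Next I would check that the bottom map $f_* \colon H_0(V;\Omega_4^{\Spin}) \to H_0(F;\Omega_4^{\Spin})$ is an isomorphism $\Z/2 \xrightarrow{\cong} \Z/2$: both are $H_0$ with $w_1$-twisted coefficients, so each is $\Z \otimes_{\Z\pi} 16\Z^{w_1} \cong \Z/2$ as computed in the proof of \cref{prop:strategy}, and $f_*$ is the identity on $H_0$ since $w_1 = f^* v_1$ is nontrivial. Dualising the commuting square via $\Hom(-,\Z/2)$ and the universal-coefficients evaluation isomorphism $H^4(-;\Z/2) \cong H_4(-;\Z/2)^*$, the class dual to $d_4^F$ is $\mathfrak{o}$ by \cref{defn:mathfrak-o}, and naturality of duality with respect to $f_*$ and $f^*$ then shows that the class dual to the composite $\ker(\Sq_2^{w_1,w_2}) \twoheadrightarrow \ker(\Sq_2^{w_1,w_2})/\im d_2^V \xrightarrow{d_4^V} H_0(V;\Omega_4^{\Spin}) \twoheadrightarrow \Z/2$ is precisely $f^*\mathfrak{o}$, read modulo $\im \Sq^2_{w_1,w_2}$. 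The final sentence, independence of $f$, is immediate: the dual description just produced involves only $V$, $w_1$, $w_2$, and the intrinsic differential $d_4^V$, so any two lifts $f, f'$ produce the same class in $H^4(V;\Z/2)/\im \Sq^2_{w_1,w_2}$.

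**The main obstacle** will be bookkeeping the quotient by $\im \Sq^2_{w_1,w_2}$ correctly under dualisation. The subtlety is that $d_4^F$ is only defined on $E^4_{4,1} = H_4(F;\Z/2)/\im d_2^F$, and $\im d_2^F$ is exactly the annihilator, under the evaluation pairing, of $\im \Sq^2_{v_1,v_2} \subseteq H^4(F;\Z/2)$ --- which by \cref{lem:trivial-differential} vanishes, so over $F$ there is no quotient to track. Over $V$, however, $\im \Sq^2_{w_1,w_2}$ need not vanish, and I must verify that the image of $d_2^V \colon H_6(V) \to E^2_{4,1}$ (whose annihilator in cohomology is precisely $\im \Sq^2_{w_1,w_2}$) is accounted for, so that the well-definedness of the \emph{class} $[f^*\mathfrak{o}]$ in the quotient matches the fact that $d_4^V$ is only defined on $\ker(\Sq_2^{w_1,w_2})/\im d_2^V$ rather than all of $H_4(V;\Z/2)$. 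Pinning down that the domain $\ker(\Sq_2^{w_1,w_2})$ is the correct one --- namely that elements outside this kernel are killed by $d_2^V$ before reaching the $E^4$ page --- is where \cref{prop:differentials}~\eqref{prop:differentials-item-i} must be invoked carefully, and this compatibility of annihilators is the one point requiring genuine attention rather than formal naturality.
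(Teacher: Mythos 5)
Your argument is essentially the paper's own proof: naturality of the James spectral sequence applied to the map of fibrations induced by $f$, identification of the $E^4_{4,1}$ terms as $\ker(\Sq_2^{w_1,w_2})/\im d_2^V$ over $V$ and $H_4(F;\Z/2)/\im d_2^F$ over $F$ via \cref{prop:differentials} and \cref{lem:trivial-differential} (with $d_3=0$ because $\Omega_3^{\Spin}=0$), the same commuting square relating $d_4^V$ to $d_4^F$, dualisation against \cref{defn:mathfrak-o}, and independence of the lift obtained for free because the dual description involves only $d_4^V$.

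There is, however, one step that is wrong as written, though it is repairable in a word. You assert that $f_*\colon H_0(V;\Omega_4^{\Spin})\to H_0(F;\Omega_4^{\Spin})$ is an isomorphism of two copies of $\Z/2$, ``since $w_1=f^*v_1$ is nontrivial.'' The proposition does not assume $w_1\neq 0$, and the paper's crucial application of it --- inside the proof of \cref{cor:d4}, taken with $V=\wt F$ and $f=p$ the universal covering --- has $w_1=p^*v_1=0$, so that $H_0(\wt F;\Omega_4^{\Spin})\cong 16\Z\cong\Z$ with untwisted coefficients. This is exactly why the statement ends with a surjection $H_0(V;\Omega_4^{\spin})\twoheadrightarrow\Z/2$ rather than an isomorphism. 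Your proof as written only establishes the proposition when $w_1$ twists $H_0$, which excludes the case the paper actually needs. Fortunately the rest of your argument uses only commutativity of the square and the fact that the bottom composite is dual to $\mathfrak{o}$, neither of which requires injectivity of the right-hand vertical map, so replacing ``isomorphism'' by ``the surjection induced by $f$'' repairs the step with no further change. A smaller point: in your final paragraph the annihilator bookkeeping is garbled --- $\im d_2^V$ (the image from $E^2_{6,0}$) is not characterised by having annihilator $\im\Sq^2_{w_1,w_2}$; the two relevant facts are that the annihilator of $\ker(\Sq_2^{w_1,w_2})\subseteq H_4(V;\Z/2)$ under the evaluation pairing is $\im\Sq^2_{w_1,w_2}\subseteq H^4(V;\Z/2)$, which is what makes ``dual'' meaningful in the quotient, and that vanishing of the composite on $\im d_2^V$ is automatic from naturality, since $f_*(\im d_2^V)\subseteq\im d_2^F$ and $\mathfrak{o}$ factors through $H_4(F;\Z/2)/\im d_2^F$ by \cref{defn:mathfrak-o}.
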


\begin{proof}
	 The lift $f$ induces a map $B_V \to B_F$ over $\BO$ and a corresponding map of bordism groups $\Omega_4(\xi_V) \to \Omega_4(\xi_F)$.
	
	By \cref{prop:differentials} and \cref{lem:trivial-differential}, the $d_2$ differential with domain  $H_4(F;\Omega_1^{\spin})$ is trivial. The differential with domain $H_4(V;\Omega_1^{\spin})$ is given by $\Sq_2^{w_1,w_2}$. Hence the $E^4_{4,1}$ term for $F$ is $H_4(F;\Z/2)/\im d_2^F$ and the $E^4_{4,1}$ term for $V$ is $\ker(\Sq_2^{w_1,w_2})/\im d_2^V$ (as before recall that $\Omega_3^{\Spin}=0$ so in both cases $d_3=0$).
	By naturality of the James spectral sequence, there is a commutative diagram
	\[\begin{tikzcd}
		\ker(\Sq_2^{w_1,w_2})\ar[r, two heads]\ar[d,"f_*"]& \ker(\Sq_2^{w_1,w_2})/\im d_2^V\ar[r,"{d_4^V}"]\ar[d,"f_*"] & H_0(V;\Omega_4^{\spin})\ar[d,two heads]\\
		H_4(F;\Z/2)\ar[r, two heads] \ar[rr,"\mathfrak{o}",bend right=15] & H_4(F;\Z/2)/\im d_2^F\ar[r,"{d_4^F}"]&H_0(F;\Omega_4^{\spin})\cong \Z/2.
	\end{tikzcd}\]
	By \cref{defn:mathfrak-o}, the bottom composition is dual to $\mathfrak{o}$. Hence the down-right-right composition is dual to $[f^*\mathfrak{o}]$. Thus the right-right-down composition is also dual to $[f^*\mathfrak{o}]$, as claimed. Since the differential $d_4^V$ is independent of the choice of lift $f$, so is the class $[f^*\mathfrak{o}]$.
\end{proof}

Recall that $p \colon \wt{F} \to F$ denotes the projection map of the universal cover.

\begin{lemma}
	\label{cor:d4}
    The class $\mathfrak{o}\in H^4(F;\Z/2)$ is
    the unique  class $\mathfrak{o}\in H^4(F;\Z/2)$ such that $p^*\mathfrak{o}=x_1x_2 \in H^4(\wt{F};\Z/2)$ and $s^*\mathfrak{o}=0 \in H^4(K;\Z/2)$ for any choice of  section $s\colon K_1\to F$ of $c \colon F \to K_1$.
\end{lemma}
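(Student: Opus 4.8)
The plan is to characterise $\mathfrak{o}$ by the two stated pullback properties and then verify that the class defined in \cref{defn:mathfrak-o} satisfies them. First I would establish uniqueness: I claim that the map
\[
(p^*,s^*)\colon H^4(F;\Z/2)\to H^4(\wt F;\Z/2)\oplus H^4(K_1;\Z/2)
\]
restricted to the relevant fibres is injective enough to pin down a class with prescribed image $(x_1x_2,0)$. Concretely, suppose $\mathfrak{o}'$ and $\mathfrak{o}''$ both satisfy $p^*(-)=x_1x_2$ and $s^*(-)=0$; then their difference $\delta:=\mathfrak{o}'-\mathfrak{o}''$ satisfies $p^*\delta=0$ and $s^*\delta=0$. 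Using the Leray--Serre spectral sequence for $\wt F\xrightarrow{p}F\xrightarrow{c}K_1$ in degree $4$ (as in the proof of \cref{lem:cohom-classes-F} and \cref{cor:cohom-wtF}), the kernel of $p^*$ on $H^4$ is the image of classes pulled back from the base $K_1$ along $c$. But the section $s$ splits $c$, so $s^*c^*=\id$, and therefore any class in $\ker p^*=\im c^*$ that also lies in $\ker s^*$ must be zero. Hence $\delta=0$ and uniqueness follows, modulo checking that the only contribution to $\ker p^*$ in degree $4$ is indeed $\im c^*$, i.e.\ that there are no surviving classes from intermediate filtration steps beyond those detected by $s^*$.

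Next I would show the class of \cref{defn:mathfrak-o} satisfies $s^*\mathfrak{o}=0$. Since $s$ provides a section and hence a map of fibrations $\xi_{K_1}\to\xi_F$, naturality of the James spectral sequence gives a commuting square relating the $d_4$ differentials. But $K_1\simeq\RP^\infty$ has $w_2=0$ pulled back (more precisely $s^*v_2=s^*v_1^2$ by \cref{lem:cohom-classes-F}, so the relevant normal $1$-type over $K_1$ is the $\Pin^-$ one), and by the computation underlying \cref{lem:nontrivial-X-diff} the nontrivial differential hitting $E_{0,4}$ over $\RP^\infty$ in this case is $d_3$, not $d_4$; equivalently $H_4(K_1;\Z/2)\to E_{0,4}$ vanishes at the $d_4$ stage because $E^4_{0,4}=0$ there. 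By \cref{prop:choice-of-lift} applied with $V=K_1$ and the lift $s$, the class $[s^*\mathfrak{o}]$ is dual to the $d_4^{K_1}$ differential, which is therefore trivial, giving $s^*\mathfrak{o}=0\in H^4(K_1;\Z/2)/\im\Sq^2_{s^*v_1,s^*v_2}$; since $\Sq^2_{s^*v_1,s^*v_2}$ on $H^2(K_1)$ surjects or the target is small, I would check directly that this forces $s^*\mathfrak{o}=0$ on the nose.

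Finally, and this is the main obstacle, I would compute $p^*\mathfrak{o}=x_1x_2\in H^4(\wt F;\Z/2)$. The idea is to apply \cref{prop:choice-of-lift} with $V=\wt F$ and the lift obtained from $p$: then $p^*\mathfrak{o}$ is dual to the $d_4^{\wt F}$ differential $E^4_{4,1}\to E^4_{0,4}$ in the spectral sequence for $\Omega_4(\xi_{\wt F})$. I would extract this differential from the detailed filtration computation in \cref{lem:wtF-bordism}: the class $[K3]$ represented by $(16,0)$ dies precisely at the $d_4$ stage via \cref{lem:d4-nonzero}, and tracing which element of $H_4(\wt F;\Z/2)\cong H_4(Y;\Z/2)$ detects this passage should identify the dual class as $x_1x_2$ (recall $H_4(Y;\Z/2)$ has a basis including the cross term $x_1x_2$, and the Tor-class analysis in the proof of \cref{lem:wtF-bordism}\eqref{it:wtF-ii} already isolates the $x_1x_2$ direction). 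The delicate point is bookkeeping: I must confirm that the surviving $E^4_{0,4}\cong\Z/2$ for $\wt F$ is hit by the basis element dual to $x_1x_2$ and not by $x_1^2$ or $x_2^2$; this I would pin down using the $\Z/2$-equivariance of \cref{cor:wtF} (the deck transformation swaps $x_1\leftrightarrow x_2$ and negates orientation), under which $x_1x_2$ is the unique invariant cross term, matching the symmetric way $[K3]=(16,0)$ becomes null in \cref{lem:d4-nonzero}. Combining uniqueness with these two computed pullbacks identifies the class of \cref{defn:mathfrak-o} with the asserted $\mathfrak{o}$.
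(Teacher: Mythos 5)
Your uniqueness step and your treatment of $s^*\mathfrak{o}$ follow the paper's route in outline, but the justification you give for $s^*\mathfrak{o}=0$ is wrong. You appeal to the computation underlying \cref{lem:nontrivial-X-diff}, but that lemma concerns the structure with $u_2=0$ (normal $\Pin^+$, so $\Omega_4(\xi_{\RP^\infty})\cong\Omega_4^{\Pin^-}=0$ and a forced nontrivial $d_3$). The section instead satisfies $s^*v_2=(s^*v_1)^2$, so $\Omega_4(\xi_{K_1})\cong\Omega_4^{\Pin^+}\cong\Z/16$; here $w_1^3=w_1w_2$, hence $d_3^{K_1}=0$ by \cref{thm:d3-diff}, and $E^4_{0,4}\cong\Z/2\neq 0$, contrary to your claim that ``$E^4_{0,4}=0$ there''. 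Indeed, if any differential killed $E_{0,4}$ over $K_1$ with this structure, then $[K3]$ would vanish in $\Omega_4^{\Pin^+}$, contradicting \cref{prop:pin-4-bordism-groups}. The correct argument (the paper's) runs the other way: by \cref{thm:kreck} the class $[K3]$ is \emph{nontrivial} in $\Omega_4(\xi_{K_1})$, so all differentials into $E_{0,4}$ vanish, in particular $d_4^{K_1}=0$, and then naturality (or your use of \cref{prop:choice-of-lift} with $V=K_1$) gives $s^*\mathfrak{o}=0$; the quotient by $\im\Sq^2_{s^*v_1,s^*v_2}$ is harmless since $\Sq^2(u_1^2)+u_1\Sq^1(u_1^2)+u_1^2\cdot u_1^2=0$, so that image is zero.

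The more serious gap is in your computation of $p^*\mathfrak{o}$. Your plan is to trace where ``$[K3]$ represented by $(16,0)$ dies at the $d_4$ stage'' in the spectral sequence over $\wt F$, but $[K3]$ does not die there: $(16,0)\neq 0$ in $\Omega_4(\xi_{\wt F})\cong(\Z\oplus\Z)/(4,-4)$, and $d_4^{\wt F}$ vanishes for the elementary reason that its domain is $2$-torsion while $E_{0,4}\cong H_0(\wt F;\Omega_4^{\Spin})\cong 16\Z$ is torsion-free. The death of $[K3]$ happens only after descending to $F^{(4)}$ (\cref{lem:d4-nonzero}), where the dual of $d_4^F$ \emph{is} $\mathfrak{o}$ by \cref{defn:mathfrak-o} — so tracing that death cannot compute $p^*\mathfrak{o}$; it is circular. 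What \cref{prop:choice-of-lift} with $V=\wt F$ and $f=p$ actually yields, precisely because $d_4^{\wt F}=0$, is only $[p^*\mathfrak{o}]=0$ modulo $\im\big(\Sq^2_{0,x_1+x_2}\colon H^2(\wt F;\Z/2)\to H^4(\wt F;\Z/2)\big)$, and the computation $\Sq^2(x_i)+(x_1+x_2)x_i=x_1x_2$ (with $\Sq^2(x_1+x_2)+(x_1+x_2)^2=0$) shows this image is exactly $\{0,x_1x_2\}$. It is this computation, not the deck-transformation symmetry, that excludes the $a_1$ direction: your argument that $x_1x_2$ is ``the unique invariant cross term'' fails because $x_1^2+x_2^2$ is also invariant. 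Finally, to rule out $p^*\mathfrak{o}=0$ you must combine $\mathfrak{o}\neq 0$ (\cref{cor:d4-nontrivial-F}) with $s^*\mathfrak{o}=0$ and $s^*c^*=\id$, so that $\mathfrak{o}$ has no $b=c^*(\iota_1^4)$ component; this input appears nowhere in your outline.
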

\begin{proof}
    From \cref{cor:wtF}, \cref{prop:homology-K2-low-degrees}, and the computations with the K\"unneth theorem in the proof of \cref{lem:wtF-bordism},
    it follows that \[H^2(\wt F;\Z/2)\cong (\Z/2)[\Z/2],\, H^3(\wt F;\Z/2)\cong (\Z/2)[\Z/2], \text{ and }H^4(\wt F;\Z/2)\cong (\Z/2)[\Z/2]\oplus \Z/2.\]
    Since $H^p(K_1;(\Z/2)[\Z/2]) \cong H^p(S^{\infty};\Z/2)=0$ for $p>0$, the Leray--Serre spectral sequence for $\wt F\to F\to K_1$ shows that
    there is a short exact sequence
    \begin{equation}\label{eqn:ses-for-h4-F-Z2}
        0\to H^4(K_1;\Z/2)\xrightarrow{c^*} H^4(F;\Z/2)\xrightarrow{p^*}H^0(K_1;H^4(\wt F;\Z/2))\to 0.
        \end{equation}
    We have $H^0(K_1;H^4(\wt F;\Z/2))\cong H^4(\wt F;\Z/2)^{\Z/2}\cong (\Z/2)^2$ generated by $x_1x_2$ and $x_1^2+x_2^2$.
    Here recall that the $x_i$ denote the generators of $H^2(Y;\Z/2) = H^2(\wt{F};\Z/2)$.
    Furthermore, a section $s\colon K_1\to F$ of $c$ yields a splitting of the short sequence above.
    Thus a basis of $H^4(F;\Z/2)$ is given by $b:=c^*(\iota_1^4)$, the image of $H^4(K_1;\Z/2)$, and $a_1$ and $a_2$, where $a_1$ and $a_2$ are determined by $p^*(a_1)=x_1^2+x_2^2$, $p^*(a_2)=x_1x_2$, and the condition that $s^*(a_i)=0$. In particular $\mathfrak{o}$ is a $\Z/2$-linear combination of $a_1$, $a_2$, and $b$.
It also follows from the short exact sequence~\eqref{eqn:ses-for-h4-F-Z2} that the conditions $p^*\mathfrak{o}=x_1x_2$ and $s^*\mathfrak{o}=0$ determine a unique class.

    Let $s\colon K_1\to F$ be a section. We now show that $s^*\mathfrak{o}=0$.
	We compare the James spectral sequence for $\Omega_4(\xi_F)$ with the spectral sequence for $\Omega_4(\xi_{K_1})$, where  $\xi_{K_1}$ is the fibration determined by $(K_1,s^*v_1,s^*v_2)$ as in \cref{defn:key-pullback}, and the $v_i$ are as in \eqref{defn:v-i}.
    Since $s$ is a section, naturality of the James spectral sequences implies that the $d_2$ differential with domain $H_4(K_1;\Omega_1^{\spin})$ is again trivial.
    Therefore $E^4_{4,1}\cong E^2_{4,1} \cong H_4(K_1;\Z/2)$ (there is no $d_3$ either, because $\Omega_3^{\Spin}=0$).
	Also recall from \cref{lem:cohom-classes-F} that $s^*v_1^2=(s^*v_1)^2 =s^*v_2$. We can therefore apply \cref{thm:kreck}, which implies that the differential $d_4^{K_1}\colon E^4_{4,1}\cong H_4(K_1;\Z/2)\to E^4_{0,4}$ is trivial. By naturality of the James spectral sequence the following diagram commutes,
 \[\begin{tikzcd}[column sep=large]
        H_4(K_1;\Z/2)\ar[r,"\cong"]\ar[d,"s_*"]&E_{4,1}^4\ar[r,"d_4^{K_1}=0"]\ar[d,"s_*"]&E^4_{0,4}\cong \Z/2\ar[d,"\cong","s_*"']\\
        H_4(F;\Z/2)\ar[r,two heads]\ar[rr,"\mathfrak{o}",bend right=15]&E_{4,1}^4\ar[r,"d_4^F"]&E^4_{0,4}\cong \Z/2
    \end{tikzcd}\]
This implies that $s^*\mathfrak{o}=0$, as desired.

It remains to show that $p^*\mathfrak{o}=x_1x_2$, i.e.\ that $\mathfrak{o}=a_2$.
Since $s$ is a section of $c$, $s^*b = s^*c^*(\iota_1^4) = \iota_1^4 \neq 0$.  Thus since $s^*\mathfrak{o}=0$, we see that $\mathfrak{o}=\varepsilon_1a_1+\varepsilon_2a_2$ for some $\varepsilon_i\in \Z/2$. Since the $d_4$ differential is trivial for $\Omega_4(\xi_{\wt F})$ (the domain is 2-torsion and the codomain is $16\Z$), $p^*\mathfrak{o}=p^*(\varepsilon_1a_1+\varepsilon_2a_2)$ has to be trivial in $H^*(\wt F;\Z/2)/\im(\Sq^2_
    {w_1,w_2})$, by \cref{prop:choice-of-lift} applied with $f=p$. That is, $p^*(\varepsilon_1a_1+\varepsilon_2a_2)$ lies
    in the image of the map $\Sq^2_{w_1,w_2}\colon H^2(\wt F;\Z/2)\to H^4(\wt F;\Z/2)$. This image is $\{0,x_1x_2\}$ and hence $\varepsilon_1=0$. Since $\mathfrak{o}\neq 0$ by \cref{cor:d4-nontrivial-F}, $\mathfrak{o}=a_2$ as claimed.
\end{proof}

\begin{proof}[Proof of \cref{thm:B}]
	Item \eqref{it:B-1} was shown in \cref{prop:hom-groups-F} and \eqref{it:B-2} follows from \cref{cor:d4}.
	
	Now we show \eqref{it:B-4}. Let $(\pi,w_1,w_2)$ be a normal $1$-type. If $w_1^3=w_1w_2$, there is a lift $f\colon B\pi \to F$ of $(w_1,w_2)$ along $u$. If
	\[0=[f^*\mathfrak{o}]\in H^4(\pi;\Z/2)/\im\big(\Sq^2_{w_1,w_2} \colon H^2(\pi,\Z/2)\to H^4(\pi;\Z/2)\big),\]
    then the composition
    \begin{equation}\label{eqn:composition-with-d4}
        \ker(\Sq_2^{w_1,w_2})\twoheadrightarrow \ker(\Sq_2^{w_1,w_2})/\im d_2^\pi\xrightarrow{d_4^\pi} \Z/2
    \end{equation}
	vanishes, by \cref{prop:choice-of-lift} applied with $V=B\pi$. Hence $d_4^\pi=0$ because the first map is surjective. It follows that $[K3]$ survives to the $E^5$-page. By hypothesis $H_5(\pi;\Z)=0$, and so in fact $[K3] \neq 0\in \Omega_4(\xi_{\pi})$. The existence of stably exotic $4$-manifolds with normal $1$-type $(\pi,w_1,w_2)$ now follows from \cref{prop:strategy}~\eqref{item:iii}. This completes the proof of \eqref{it:B-4}.
	
	Lastly, we show \eqref{it:B-3}. Let $(\pi,w_1,w_2)$ be a normal $1$-type. If $w_1^3\neq w_1w_2$, no stably exotic $4$-manifolds with normal $1$-type $(\pi,w_1,w_2)$ exist by \cref{thm:main}~\eqref{it:main-1}. So we assume $w_1^3=w_1w_2$. As above, there is a lift $f\colon B\pi \to F$ of $(w_1,w_2)$ along $u$. If
	\[0\neq [f^*\mathfrak{o}]\in H^4(\pi;\Z/2)/\im \Sq^2_{w_1,w_2},\]
	then the composition \eqref{eqn:composition-with-d4} is nonzero by \cref{prop:choice-of-lift}, again applied with $V=B\pi$. Hence $d_4^\pi \neq 0$, and so  $0=[K3]\in \Omega_4(\xi_{\pi})$. Thus no stably exotic $4$-manifolds with normal $1$-type $(\pi,w_1,w_2)$ exist by \cref{prop:strategy}~\eqref{item:ii}. This implies \eqref{it:B-3}.
\end{proof}

We close the section by giving the details of the example promised in \cref{remark-in-intro-example}. This is an example where $w_1^3=w_1w_2$ and the obstruction $f^*\mathfrak{o}$ is nontrivial.

\begin{proposition}
\label{prop:ex1.5}
    Let $a_k$ with $k\in \Z/4$ be generators of $\Z^4$. Let $\pi\cong \Z^4\rtimes \Z/2$, where $\Z/2$ acts on $\Z^4$ by sending $a_k\to a_{k+2}$. There exist classes $w_i\in H^i(\pi;\Z/2)$ for $i=1,2$ such that $w_1^3=w_1w_2$ but no stably exotic $4$-manifolds with normal $1$-type $(\pi,w_1,w_2)$ exist.
\end{proposition}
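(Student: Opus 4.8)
The plan is to apply \cref{thm:B}~\eqref{it:B-3} in contrapositive form: it suffices to produce $w_1\neq 0$ and $w_2$ with $w_1^3=w_1w_2$ for which the obstruction $[f^*\mathfrak{o}]$ is \emph{nonzero} in $H^4(\pi;\Z/2)/\im\Sq^2_{w_1,w_2}$. I would take $w_1=\bar t$, the pullback of the generator of $H^1(\Z/2;\Z/2)$ along the quotient $\pi\to\pi/\Z^4\cong\Z/2$. Then $H:=\ker(w_1)\cong\Z^4$, the associated double cover is $\rho\colon T^4=BH\to B\pi$, and $w_1|_H=0$. Write $e_0,\dots,e_3\in H^1(T^4;\Z/2)$ for the basis dual to $a_0,\dots,a_3$, so that $H^*(T^4;\Z/2)=\Lambda(e_0,\dots,e_3)$, and the deck transformation $\tau$ (the conjugation action of $\pi/H$) acts by $e_k\mapsto e_{k+2}$.

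To choose $w_2$, I would use the Lyndon--Hochschild--Serre spectral sequence for $1\to\Z^4\to\pi\to\Z/2\to1$, noting that the extension splits, so the bottom row $H^*(\Z/2;\Z/2)$ survives and inflation is injective. This forces $E_\infty^{0,2}=E_2^{0,2}$, i.e.\ restriction $H^2(\pi;\Z/2)\to H^2(T^4;\Z/2)^{\tau}$ is onto the $\tau$-invariants; in particular there is $u\in H^2(\pi;\Z/2)$ with $\rho^*u=e_0e_1+e_2e_3=(1+\tau)(e_0e_1)$. The product $\bar t\,u$ lies in the lowest filtration $F^3H^3=\Z/2\cdot\bar t^3$, because its leading term in $E_\infty^{1,2}$ is $t\cdot(e_0e_1+e_2e_3)$, which vanishes: $e_0e_1+e_2e_3$ is the norm element of a free $\Z/2[\Z/2]$-summand of $H^2(\Z^4;\Z/2)$, and cup-with-$t$ kills such classes. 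Hence $\bar t\,u\in\{0,\bar t^3\}$, and after possibly replacing $u$ by $u+\bar t^2$ (which leaves $\rho^*u$ unchanged) I may assume $\bar t\,u=0$. Setting $w_2:=\bar t^2+u$ then gives $w_1^3=\bar t^3=\bar t(\bar t^2+u)=w_1w_2$, with $w_2\neq w_1^2$ since $u\neq0$, and $\rho^*w_2=e_0e_1+e_2e_3$.

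The heart of the argument is to compute $\rho^*(f^*\mathfrak{o})$ and compare it with $\rho^*(\im\Sq^2_{w_1,w_2})$. Lifting a lift $f\colon B\pi\to F$ to universal covers gives an equivariant $\tilde f\colon T^4\to\wt F$ with $\rho^*f^*\mathfrak{o}=\tilde f^*p^*\mathfrak{o}=\tilde f^*(x_1x_2)=y_1y_2$, where $y_i:=\tilde f^*x_i$, using $p^*\mathfrak{o}=x_1x_2$ from \cref{cor:d4}. Since $p^*v_2=x_1+x_2$ by \cref{cor:cohom-wtF} and $T^*x_1=x_2$ by \cref{cor:wtF}, these satisfy $y_1+y_2=\rho^*w_2=e_0e_1+e_2e_3$ and $\tau y_1=y_2$. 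Writing $y_1=e_0e_1+v$ with $v\in H^2(T^4;\Z/2)^{\tau}$ (forced by $(1+\tau)(y_1+e_0e_1)=0$), and using $v^2=0$ together with the key vanishing $(e_0e_1+e_2e_3)\cdot v=0$ for every invariant $v$, I obtain $y_1y_2=e_0e_1e_2e_3\neq0$. On the other hand, for any $z\in H^2(\pi;\Z/2)$ naturality gives $\rho^*\Sq^2_{w_1,w_2}(z)=\Sq^2(\rho^*z)+w_2|_H\cdot\rho^*z$ (the $w_1$-term dies as $w_1|_H=0$); here $\Sq^2(\rho^*z)=(\rho^*z)^2=0$ because squares of degree-two classes vanish in an exterior algebra, and $w_2|_H\cdot\rho^*z=0$ because $\rho^*z$ is $\tau$-invariant and $e_0e_1+e_2e_3$ annihilates every invariant degree-two class. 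Thus $\rho^*(\im\Sq^2_{w_1,w_2})=0$ while $\rho^*(f^*\mathfrak{o})\neq0$, so $f^*\mathfrak{o}\notin\im\Sq^2_{w_1,w_2}$ and $[f^*\mathfrak{o}]\neq0$ (independently of the lift, by \cref{prop:choice-of-lift}). By \cref{thm:B}~\eqref{it:B-3} no stably exotic $4$-manifolds with this normal $1$-type exist.

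The main obstacle is the final comparison: naively one only learns that $\rho^*(f^*\mathfrak{o})\neq0$, which shows $f^*\mathfrak{o}\neq0$ but not that it avoids $\im\Sq^2_{w_1,w_2}$. The decisive point is the exterior-algebra computation exhibiting a single class $w_2|_H=e_0e_1+e_2e_3$ that simultaneously annihilates every $\tau$-invariant degree-two class (so the restricted image of $\Sq^2_{w_1,w_2}$ is zero) yet yields $y_1y_2=e_0e_1e_2e_3\neq0$; this is precisely the mismatch that forces $[f^*\mathfrak{o}]\neq0$. A secondary technical point is arranging $\bar t\,u=0$, where the splitting of the extension and the free-summand structure of $H^2(\Z^4;\Z/2)$ as a $\Z/2$-module are exactly what make the leading-term argument go through.
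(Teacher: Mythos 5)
Your proof is correct, and it reaches the same computational heart as the paper's proof --- restricting along the double cover $\rho\colon T^4\to B\pi$, finding $\rho^*f^*\mathfrak{o}=e_0e_1e_2e_3\neq 0$ via $p^*\mathfrak{o}=x_1x_2$, and showing $\rho^*$ kills $\im\Sq^2_{w_1,w_2}$ because $e_0e_1+e_2e_3$ annihilates every $\tau$-invariant degree-two class in the exterior algebra --- but you arrive there by a genuinely different construction. The paper proceeds in the opposite order: it first builds the lift $f\colon B\pi\to F$ explicitly, by writing down the equivariant map $\wh f=(t_1t_2,t_3t_4)\colon T^4\to \wt F\simeq K_2\times K_2$ (equivariance via \cref{cor:wtF}) and letting it descend; it then \emph{defines} $w_i:=f^*v_i$, so that $w_1^3=w_1w_2$ holds automatically because $v_1^3=v_1v_2$ in $F$, and the restrictions $y_1=t_1t_2$, $y_2=t_3t_4$ are known by fiat rather than derived. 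You instead construct $(w_1,w_2)$ cohomologically via the Lyndon--Hochschild--Serre spectral sequence --- using that $H^1(\Z^4;\Z/2)$ and the relevant summand of $H^2(\Z^4;\Z/2)$ are free $(\Z/2)[\Z/2]$-modules, plus the splitting of the extension, to get surjectivity onto invariants and to run the filtration argument placing $\bar t\,u$ in $\{0,\bar t^3\}$ --- and then obtain $f$ abstractly from obstruction theory, recovering $y_1y_2=e_0e_1e_2e_3$ purely from the constraints $\tau y_1=y_2$ and $y_1+y_2=e_0e_1+e_2e_3$ (the step $y_1=e_0e_1+v$ with $v$ invariant, $v^2=0$, $(e_0e_1+e_2e_3)v=0$ is correct, since over $\Z/2$ the kernel of $1+\tau$ is exactly the invariants). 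Your route costs more setup (the leading-term argument and the adjustment $u\mapsto u+\bar t^2$, neither of which the paper needs), but it buys two things: it exhibits $w_2$ as an explicit class in $H^2(\pi;\Z/2)$ rather than as a pullback along a map one must first guess, and your constraint-based computation of $\rho^*f^*\mathfrak{o}$ works uniformly for \emph{any} lift $f$, giving an independent sanity check on the lift-independence that the paper obtains from \cref{prop:choice-of-lift}.
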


\begin{proof}	
    First we construct a map $f\colon B\pi\to F$ as follows. Start with a map $\wh f\colon T^4\to \wt F\simeq K_2\times K_2$ given by $(t_1t_2,t_3t_4)$, where $t_k\in H^1(\Z^4,\Z/2)$ corresponds to the generator $a_k$ of $\Z^4$. This map is equivariant with respect to the $\Z/2$-actions by \cref{cor:wtF} and hence descends to a map $f\colon B\pi\to F$. Let $w_i:=f^*v_i$ for $i=1,2$, where $v_1$ and $v_2$ are as in \eqref{defn:v-i}.   Then $w_1^3=w_1w_2$ holds by construction of $F$.

     By \cref{thm:B}~\eqref{it:B-3}, it remains to show that
     \[[f^*\mathfrak{o}]\neq 0\in H^4(\pi;\Z/2)/\im \Sq^2_{w_1,w_2},\] i.e.\ that $f^*\mathfrak{o}$ does not lie in the image of \[\Sq^2_{w_1,w_2} = \Sq^2(-)+w_1\Sq^1(-)+w_2 \cup - \colon H^2(\pi,\Z/2)\to H^4(\pi;\Z/2).\]
   To begin the proof of this, note that by construction $\wh{f}^*x_1=t_1t_2$ and $\wh{f}^*x_2=t_3t_4$. By \cref{thm:B} \eqref{it:B-2}, $p^*\mathfrak{o}=x_1x_2$.
    Thus
    \begin{equation}\label{eqn:wh-f-p-o-nontrivial}
        \wh{f}^*p^*\mathfrak{o}=\wh f^*(x_1x_2)=t_1t_2t_3t_4\neq 0\in H^4(\Z^4;\Z/2).
    \end{equation}
    Let $j\colon T^4\to B\pi$ be determined by the inclusion of $\Z^4$.
    Note that $f \circ j =  p \circ \wh{f} \colon T^4 \to F$.
    Since $j^*f^*\mathfrak{o}=\wh f^*p^*\mathfrak{o}\neq 0$ by \eqref{eqn:wh-f-p-o-nontrivial}, if we show that $j^*\circ \Sq^2_{w_1,w_2}=0$ it will follow that $f^*\mathfrak{o}$ does not lie in the image of $\Sq^2_{w_1,w_2}$. Since $j^*w_1=0$, $j^*w_2=\wh f^*p^*v_2=\wh f^*(x_1+x_2)=t_1t_2+t_3t_4$ and $\Sq^2$ vanishes on $T^4$, the map $j^*\circ \Sq^2_{w_1,w_2}$ is given by \[j^*(w_2 \cup -) = j^*w_2 \cup j^*(-) = (t_1t_2+t_3t_4) \cup j^*(-).\]
    Furthermore the image of $j^*\colon H^2(\pi;\Z/2)\to H^2(\Z^4;\Z/2)$ consists of the elements invariant under the $\Z/2$-action. Hence a basis for $\im j^*$ is given by $\{t_1t_3,t_2t_4,t_1t_2+t_3t_4,t_1t_4+t_2t_3\}$. It is straightforward to see that the cup product of each of these elements with $t_1t_2+t_3t_4$ is trivial. Hence $j^*\circ \Sq^2_{w_1,w_2}=0$, as needed.

	Thus by \cref{thm:B}~\eqref{it:B-3}, there are no stably exotic $4$-manifolds with normal $1$-type $(\pi,w_1,w_2)$.
\end{proof}

\section{Computing the \texorpdfstring{$d_5$}{d5} differential}\label{section:determining-d-5-diff}

Our next goal is to understand the class $[K3]$ in the case that $w_1^3 + w_1w_2 =0$ and the pullback of $\mathfrak{o}$ from \cref{cor:d4} is also trivial. For this we consider the fibration sequence
\begin{equation}\label{eqn:universal-fibration-for-d5}
G \xrightarrow{y} F \xrightarrow{\mathfrak{o}}K(\Z/2,4).\end{equation}

\begin{lemma}\leavevmode
	\label{lem:homology-G}
	\begin{enumerate}[(a)]
		\item\label{it:hom-G-i} The map $\wt{y}_*\colon H_i(\wt G;\Z/2)\to H_i(\wt F;\Z/2)$ is an isomorphism for $i\leq 3$.
		\item\label{it:hom-G-ii} The map $\wt{y}_*\colon H_5(\wt G;\Z)\to H_5(\wt F;\Z)$ is surjective.
	\end{enumerate}
\end{lemma}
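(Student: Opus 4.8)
The plan is to pass to universal covers and read off both maps from the Serre spectral sequence of a fibration whose fibre is highly connected, so that almost everything is forced by connectivity and a single transgression.

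First I would identify the relevant fibration. Since $K(\Z/2,4)$ is $1$-connected, the long exact sequence of homotopy groups for \eqref{eqn:universal-fibration-for-d5} shows that $y_*\colon\pi_1(G)\to\pi_1(F)$ is an isomorphism (both groups are $\Z/2$). Hence the universal cover of $G$ is the pullback $\wt G\cong G\times_F\wt F$, and the lift $\wt y\colon\wt G\to\wt F$ fits into a fibration sequence
\[
K(\Z/2,3)\longrightarrow \wt G\xrightarrow{\ \wt y\ }\wt F,
\]
because the homotopy fibre of $y$ is $\Omega K(\Z/2,4)=K(\Z/2,3)$, and this fibre is preserved under the pullback along $p\colon\wt F\to F$. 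The fibration is classified by $p^*\mathfrak{o}=x_1x_2\in H^4(\wt F;\Z/2)$, using \cref{thm:B}~\eqref{it:B-2}, and I regard $\wt F\simeq K_2\times K_2$ via \cref{cor:wtF}. As $K(\Z/2,3)$ is $2$-connected, the sequence also shows $\pi_1(\wt G)=0$, confirming that $\wt G$ is indeed the universal cover.

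For \eqref{it:hom-G-i} I would run the spectral sequence $E^2_{p,q}=H_p(\wt F;H_q(K(\Z/2,3);\Z/2))$ converging to $H_{p+q}(\wt G;\Z/2)$. The fibre has $H_q(K(\Z/2,3);\Z/2)\cong\Z/2$ for $q=0,3$ and $0$ for $q=1,2$, so in total degrees $\leq 3$ the only contributing columns are $q=0$ and the corner $E^2_{0,3}\cong\Z/2$. For $i\leq 3$ no differential enters or leaves $E^r_{i,0}$, since the potential targets and sources lie in the rows $q=1,2$, which vanish; thus $E^\infty_{i,0}=E^2_{i,0}=H_i(\wt F;\Z/2)$ and the base edge homomorphism, which is $\wt y_*$, hits everything. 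It then remains to kill the corner $E^2_{0,3}$: the transgression $d_4\colon E^4_{4,0}=H_4(\wt F;\Z/2)\to E^4_{0,3}\cong\Z/2$ is the dual of the cohomology transgression sending the fundamental class $\iota_3$ to the classifying class $x_1x_2$, hence is evaluation against $x_1x_2$; since $x_1x_2\neq0$ and we work over $\Z/2$ this is onto, so $E^\infty_{0,3}=0$. Therefore each $H_i(\wt G;\Z/2)$ with $i\leq 3$ has the single nonzero filtration quotient $E^\infty_{i,0}=H_i(\wt F;\Z/2)$, and $\wt y_*$ is an isomorphism.

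For \eqref{it:hom-G-ii} I would run the same spectral sequence with integral coefficients, using that $K(\Z/2,3)$ is $2$-connected with $H_3(K(\Z/2,3);\Z)\cong\Z/2$ and $H_4(K(\Z/2,3);\Z)=0$ (every reduced integral homology group of $K(\Z/2,3)$ is $2$-torsion, and $H_4\otimes\Z/2=0$ forces $H_4=0$). The edge homomorphism $H_5(\wt G;\Z)\twoheadrightarrow E^\infty_{5,0}\hookrightarrow E^2_{5,0}=H_5(\wt F;\Z)$ is again $\wt y_*$, so surjectivity reduces to showing $E^\infty_{5,0}=E^2_{5,0}$, i.e.\ that no differential leaves $E_{5,0}$. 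The differentials $d_r\colon E^r_{5,0}\to E^r_{5-r,r-1}$ for $r=2,3,4,5$ land in the rows $q=1,2,3,4$, whose relevant entries are $E^2_{3,1}=E^2_{2,2}=0$ (fibre $H_1,H_2$ vanish), $E^2_{1,3}=H_1(\wt F;\Z/2)=0$ (as $\wt F$ is simply connected), and $E^2_{0,4}=0$ (fibre $H_4$ vanishes). All vanish, so $\wt y_*$ is onto $H_5(\wt F;\Z)$.

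The only genuinely delicate point is the opening identification of $\wt G$ as the fibre of $x_1x_2$ together with the transgression in \eqref{it:hom-G-i}; once $x_1x_2\neq0$ is supplied by \cref{thm:B}~\eqref{it:B-2}, both parts become connectivity bookkeeping. Part \eqref{it:hom-G-ii} in particular needs no nonzero differential at all, only the vanishing ranges of the integral homology of the $2$-connected fibre.
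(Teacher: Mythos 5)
Your proposal is correct and takes essentially the same route as the paper: both run the Leray--Serre spectral sequence of the looped fibration $K(\Z/2,3)\to \wt G\xrightarrow{\wt y}\wt F$, identify $\wt G$ as the homotopy fibre of $p^*\mathfrak{o}=x_1x_2\colon \wt F\to K(\Z/2,4)$ by pasting pullback squares, and prove \eqref{it:hom-G-ii} by exactly the same vanishing of $H_q(K(\Z/2,3);\Z)$ for $q=1,2,4$ together with $E^2_{1,3}=0$. The only (harmless) divergence is in \eqref{it:hom-G-i}: you establish surjectivity of $d_4\colon H_4(\wt F;\Z/2)\to E^4_{0,3}$ directly, via the standard fact that the transgression of a principal fibration sends $\iota_3$ to the classifying class $x_1x_2\neq 0$, whereas the paper deduces nontriviality of $d_4$ indirectly from the exact sequence in low degrees, using that $H_4(\wt G;\Z/2)\to H_4(\wt F;\Z/2)$ cannot be surjective because the composite to $H_4(K(\Z/2,4);\Z/2)$ is zero while $x_1x_2$ pairs nontrivially --- both arguments are valid and rest on the same input.
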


\begin{proof}
    We first compute the map on $\Z/2$-homology.
	Consider the Leray--Serre spectral sequence for the fibration $K(\Z/2,3)\to \wt G\to \wt F$ obtained by looping the fibration \eqref{eqn:universal-fibration-for-d5}.
    Since $H_q(K(\Z/2,3);\Z/2)=0$ for $q=1,2$, the map $\wt{y}_*\colon H_i(\wt G;\Z/2)\to H_i(\wt F;\Z/2)$ is an isomorphism for $i\leq 2$ and there is an exact sequence
	\begin{equation}\label{eqn:an-exact-sequence-with-wtF-and-wtF-prime}
  \cdots\to H_4(\wt G;\Z/2)\to H_4(\wt F;\Z/2)\xrightarrow{d_4} H_0(\wt F;H_3(K(\Z/2,3);\Z/2))\to H_3(\wt G;\Z/2)\xrightarrow{\wt{y}_*} H_3(\wt F;\Z/2)\to 0.
  \end{equation}
We will show that $d_4$ is nontrivial.
The concatenation of the two pullback squares
\[\begin{tikzcd}
    \wt{G} \ar[r] \ar[d] & G \ar[r] \ar[d] & \{*\} \ar[d] \\
    \wt{F} \ar[r,"p"] & F \ar[r,"\mathfrak{o}"] & K(\Z/2,4)
  \end{tikzcd}\]
is again a pullback square, showing that $\wt G$ is the homotopy fibre of $p^*\mathfrak{o} \colon \wt F\to K(\Z/2,4)$.  It follows that the composition $H_4(\wt G;\Z/2)\to H_4(\wt F;\Z/2)\to H_4(K(\Z/2,4);\Z/2)$ is trivial.  Since $p^*\mathfrak{o}=x_1x_2\neq 0$ by \cref{cor:d4}, the map $H_4(\wt F;\Z/2)\to H_4(K(\Z/2,4);\Z/2)$ is nontrivial. Thus  $H_4(\wt G;\Z/2)\to H_4(\wt F;\Z/2)$ is not surjective, and so by exactness of \eqref{eqn:an-exact-sequence-with-wtF-and-wtF-prime} the map $d_4$ is nontrivial, as desired.
	However $H_3(K(\Z/2,3);\Z/2)\cong \Z/2$, and so the codomain of $d_4$ is also $\Z/2$, and it follows that $d_4$ is surjective. Hence by exactness of \eqref{eqn:an-exact-sequence-with-wtF-and-wtF-prime}, $\wt{y}_*\colon H_3(\wt G;\Z/2)\to H_3(\wt F;\Z/2)$ is an isomorphism. This shows~\eqref{it:hom-G-i}.

	Now consider the integral Leray--Serre spectral sequence for the same fibration $K(\Z/2,3)\to \wt G\to \wt F$.
	Since $H_q(K(\Z/2,3);\Z)=0$ for $q=1,2,4$ and
	$H_1(\wt F;H_3(K(\Z/2,3);\Z))=0$, there are no nontrivial differentials out of $H_5(\wt F;\Z)$. This proves~\eqref{it:hom-G-ii}.
\end{proof}

\begin{lemma}
    \label{lem:f31-G}
   For any choice of 3-skeleton $G^{(3)}$ of $G$, we consider the corresponding 3-skeleton $(\wt{G})^{(3)}$ of $\wt{G}$.
   The image of $\Omega_4(\xi_{(\wt G)^{(3)}})$ in $\Omega_4(\xi_{\wt G})$ maps isomorphically under $\wt y_*$ onto the image of $\Omega_4(\xi_{\wt F^{(3)}})$ in $\Omega_4(\xi_{\wt F})$, which is isomorphic to $(4\Z\oplus 4\Z)/(4,-4)$.
\end{lemma}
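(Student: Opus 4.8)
The plan is to leverage that the fibre $K(\Z/2,3)$ of $\wt y$ is $2$-connected, so $\wt y$ is a $3$-connected map and $\wt F$ may be built from $\wt G$ by attaching cells of dimension at least $4$. In particular the $3$-skeleta agree, $(\wt G)^{(3)}=\wt F^{(3)}$, with $\wt y$ restricting to a homotopy equivalence there. As $\wt F\simeq K_2\times K_2$ by \cref{cor:wtF} and $K_2=K(\Z/2,2)$ admits a minimal CW structure with no $1$-cells, this gives $\wt F^{(3)}\simeq K_2^{(3)}\vee K_2^{(3)}$; writing $Y_i:=K_2$ for $i=1,2$, the $\xi$-structure $(0,x_1+x_2)$ restricts to $(0,x_i)$ on the $i$-th summand, i.e.\ to $\xi_{Y_i^{(3)}}$. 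Combined with \cref{lem:wtF-bordism} this identifies the image of $\Omega_4(\xi_{\wt F^{(3)}})$ in $\Omega_4(\xi_{\wt F})$ as $(4\Z\oplus4\Z)/(4,-4)$, and the naturality square
\[\begin{tikzcd}
\Omega_4(\xi_{(\wt G)^{(3)}})\ar[r,"\wt y_*","\cong"']\ar[d] & \Omega_4(\xi_{\wt F^{(3)}})\ar[d]\\
\Omega_4(\xi_{\wt G})\ar[r,"\wt y_*"] & \Omega_4(\xi_{\wt F})
\end{tikzcd}\]
immediately shows that $\wt y_*$ carries $F_3^G:=\im\big(\Omega_4(\xi_{(\wt G)^{(3)}})\to\Omega_4(\xi_{\wt G})\big)$ \emph{onto} $(4\Z\oplus4\Z)/(4,-4)$. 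The whole content of the lemma is therefore the \emph{injectivity} of this restriction.

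To get a concrete grip on $F_3^G$ I would use that $x_1x_2|_{Y_i}=0$ (as $p^*\mathfrak{o}=x_1x_2$ by \cref{cor:d4}), so each factor inclusion $Y_i\hookrightarrow\wt F$ lifts to a map $\sigma_i\colon Y_i\to\wt G$. Put $g_i:=\sigma_{i*}[\CP^2]$, where $[\CP^2]$ generates $\Omega_4(\xi_{Y_i})\cong\Omega_4^{\SO}=\Z$. Feeding the wedge summands through, $F_3^G$ is generated by $4g_1$ and $4g_2$ (the images of the $3$-skeletal subgroups $4\Z\subseteq\Omega_4(\xi_{Y_i})$), and since $K3$ is simply connected we have $\sigma_{i*}[K3]=[K3]$ for both $i$, giving $16g_1=[K3]=16g_2$. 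As $\wt y_*(4g_i)=4e_i$ with $4e_1=4e_2$ already holding in $(4\Z\oplus4\Z)/(4,-4)$, the lemma is equivalent to the single relation
\[4(g_1-g_2)=0\in\Omega_4(\xi_{\wt G}).\]
This is the bordism shadow of the nonvanishing of the James differential $d_3^G\colon E^3_{5,0}(\wt G)\to E^3_{2,2}(\wt G)$ (equivalently $E^\infty_{2,2}(\wt G)\cong\Z/2$): by \cref{lem:homology-G}~\eqref{it:hom-G-i} we have $H_2(\wt G;\Z/2)\cong H_2(\wt F;\Z/2)$, and because $\wt y_*\colon H_4(\wt G;\Z/2)\to H_4(\wt F;\Z/2)$ has image the annihilator of $x_1x_2$, which is precisely $\ker d_2^F$, the competing differential $d_2^G$ into $E_{2,2}$ vanishes; thus the reduction of $(\Z/2)^2=E^3_{2,2}(\wt G)$ to $E^\infty_{2,2}$ can only be effected by $d_3^G$.

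The hard part is establishing $4(g_1-g_2)=0$. Over $\wt F$ the class $g_1-g_2$ maps to $e_1-e_2$, of order exactly $4$: this is the relation $(4,-4)$ of \cref{lem:wtF-bordism}, produced by the nontrivial differential $d_2\colon H_5(\wt F;\Z)\to H_3(\wt F;\Z/2)$ on the $\Tor$-summand. I would take a $5$-dimensional $\xi_{\wt F}$-bordism $V$ realising this relation, with $\bdry V$ representing $4[\CP^2]_{Y_1}\sqcup-4[\CP^2]_{Y_2}$, and analyse its lifting obstruction $x_1x_2|_V\in H^4(V;\Z/2)$ to $\wt G=\hofib(x_1x_2)$. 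The goal is to show this class vanishes, or can be removed by an interior connected sum with a closed $\xi_{\wt F}$-manifold carrying $x_1x_2$ nontrivially, without disturbing $\bdry V$; a lift of the (modified) $V$ to $\wt G$ then witnesses $4(g_1-g_2)=0$. Equivalently---and this is the route I expect to be cleanest---one computes $d_3^G$ as the secondary operation associated to the relation forcing $d_2^G=0$ and checks that it is detected by the class $x_1x_2=p^*\mathfrak{o}$ classifying the fibration $K(\Z/2,3)\to\wt G\xrightarrow{\wt y}\wt F$; this is exactly where the nontriviality of the fibration enters, in contrast with the split situation over $\wt F$ where $d_3^F=0$ by \cref{lem:wtF-bordism}~\eqref{it:wtF-v}. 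This secondary-operation computation is the main obstacle.

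Once $4g_1=4g_2$ is known, $F_3^G=\langle 4g_1\rangle$ is infinite cyclic, and $\wt y_*$ sends its generator $4g_1$ to $4e_1$, which generates $(4\Z\oplus4\Z)/(4,-4)$; hence $\wt y_*\colon F_3^G\to(4\Z\oplus4\Z)/(4,-4)$ is an isomorphism, as required.
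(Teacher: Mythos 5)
Your setup is sound as far as it goes: identifying the skeletal images with the $F_{3,1}$ filtration steps (which also disposes of the dependence on the choice of CW structure implicit in your identification $(\wt G)^{(3)}=\wt F^{(3)}$), the lifts $\sigma_i\colon Y_i\to \wt G$ using $p^*\mathfrak{o}=x_1x_2$, the surjectivity of $\wt y_*$ on the filtration steps, the relation $16g_1=[K3]=16g_2$, and the reduction of the lemma to the single relation $4(g_1-g_2)=0$ all match the paper's framework. Your observation that $d_2^G\colon E^2_{4,1}\to E^2_{2,2}$ vanishes (because the image of $H_4(\wt G;\Z/2)\to H_4(\wt F;\Z/2)$ is the annihilator of $x_1x_2$, which equals $\ker d_2^F$) is correct, and is not even needed by the paper. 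But the crux---the relation $4(g_1-g_2)=0$, which you rightly call the whole content of the lemma---is never proved: you offer two speculative routes (modifying a $5$-dimensional $\xi_{\wt F}$-bordism $V$ so that its lifting obstruction to $\wt G$ dies, or computing $d_3^G$ as a secondary operation detected by $p^*\mathfrak{o}$) and explicitly flag both as the main obstacle. That is a genuine gap. Moreover your parenthetical ``equivalently $E^\infty_{2,2}(\wt G)\cong \Z/2$'' is not an equivalence as stated: to place $4g_1-4g_2$ in $F_{2,2}$ at all you first need the images of $4g_1$ and $4g_2$ to agree in $E^\infty_{3,1}(\wt G)$, i.e.\ you need the differential $d_2\colon E^2_{5,0}\to E^2_{3,1}$ for $\wt G$ to kill the diagonal class, and your proposal never addresses this---indeed you never use \cref{lem:homology-G}~\eqref{it:hom-G-ii}, which the paper proved precisely for this purpose.

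The paper closes the gap without ever computing $d_3^G$ or constructing a bordism. It uses the surjectivity of $\wt y_*\colon H_5(\wt G;\Z)\to H_5(\wt F;\Z)$ from \cref{lem:homology-G}~\eqref{it:hom-G-ii} together with naturality to transfer the nontrivial $d_2\colon E^2_{5,0}\to E^2_{3,1}$ from $\wt F$ (computed in the proof of \cref{lem:wtF-bordism}~\eqref{it:wtF-ii}) to $\wt G$; since $\wt y_*$ is an isomorphism on $E^2_{3,1}$ and surjective on $E^2_{5,0}$, the image of $d_2^G$ is pinned exactly, so $E^\infty_{3,1}(\wt G)\cong\Z/2$ and $4g_1-4g_2\in F_{2,2}$. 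Then, because $E^\infty_{2,2}$ is $2$-torsion, $8g_1-8g_2$ lies in $E^\infty_{0,4}=\langle [K3]\rangle$, and comparing with $\Omega_4(\xi_{\wt F})\cong \Z^2/(4,-4)$ under $\wt y_*$ shows the induced maps on $E^\infty_{0,4}$, $E^\infty_{2,2}$, and $E^\infty_{3,1}$ are all isomorphisms; the map of filtered groups $F_{3,1}(\wt G)\to F_{3,1}(\wt F)$ is then an isomorphism, and your relation $4g_1=4g_2$ falls out as a \emph{consequence} rather than being an input. The lesson is that one need not identify which differential reduces $E_{2,2}$ from $(\Z/2)^2$ to $\Z/2$ (your $d_2^G=0$ computation shows a posteriori it must be $d_3^G$, consistent with \cref{lem:wtF-bordism}~\eqref{it:wtF-v} for the split case $\wt F$): the relation $16g_1=16g_2$, the torsion constraints on the filtration quotients, and the comparison map to $\wt F$ already determine everything, so the secondary-operation computation you identify as the main obstacle can be avoided entirely.
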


\begin{proof}
    We consider the James spectral sequences with $E^2_{p,q}$ terms $H_p(\wt G;\Omega_q^{\spin})$ and $H_p(\wt F;\Omega_q^{\spin})$, converging to $\Omega_4(\xi_{\wt G})$ and $\Omega_4(\xi_{\wt F})$, respectively. The respective images of $\Omega_4(\xi_{(\wt G)^{(3)}})$ and $\Omega_4(\xi_{\wt F^{(3)}})$ correspond to the $F_{3,1}$ filtration steps in these spectral sequences.

    Since $\wt{y}_*\colon H_i(\wt G;\Z/2)\to H_i(\wt F;\Z/2)$ is an
    isomorphism for $i\leq 3$, the terms $E^2_{0,4},E^2_{2,2}$, and $E^2_{3,1}$ agree in the spectral sequences for $\wt{G}$ and $\wt{F}$. In particular, $E^2_{3,1}\cong E^2_{2,2}\cong (\Z/2)^2$. For $\Omega_4(\xi_{\wt F})$ we have $E^\infty_{3,1}\cong E^\infty_{2,2}\cong \Z/2$ as shown in the proof of \cref{lem:wtF-bordism} \eqref{it:wtF-ii}. Hence to show that the $F_{3,1}$ filtration steps agree, it suffices to show that we also have $E^\infty_{3,1}\cong E^\infty_{2,2}\cong \Z/2$ for $\Omega_4(\xi_{\wt G})$.

    Since $p^*\mathfrak{o}=x_1x_2$ by \cref{cor:d4}, we have a fibration
    \[\wt G\to \wt F\xrightarrow{x_1x_2} K(\Z/2,4).\]
    By \cref{cor:wtF}, $\wt F\simeq Y=Y_1\times Y_2\times S^\infty$ with $Y_i=K_2$. For each $i=1,2$, one of $x_1$ and $x_2$ pulls back trivially to $Y_i$. Hence the inclusion of $Y_i$ into $\wt F\simeq Y$ lifts to $\wt G$. Thus the surjection $\Omega_4(\xi_{Y_1})\oplus \Omega_4(\xi_{Y_2}) \cong \Z \oplus \Z \to \Omega_4(\xi_{\wt F})$ from ~\eqref{it:wtF-ii} and \eqref{it:wtF-iii} of \cref{lem:wtF-bordism} factors through $\wt{y}_*$. This implies, by the same logic used in the proof of \cref{lem:wtF-bordism}~\eqref{it:wtF-ii} (but without the $E_{4,0}^{\infty} \cong \Z/4 \oplus \Z/4$ term), that the $F_{3,1}$ filtration step for $\Omega_4(\xi_{\wt G})$ is a quotient of $(4\Z\oplus 4\Z)/(16,-16)$.

    Since $\wt{y}_*\colon H_5(\wt G;\Z)\to H_5(\wt F;\Z)$ is surjective by \cref{lem:homology-G} and the differential $d_2\colon E^2_{5,0}\to E^2_{3,1}$ is nontrivial for $\Omega_4(\xi_{\wt F})$ (see the proof of \cref{lem:wtF-bordism}~\eqref{it:wtF-ii}), the same differential is nontrivial for $\Omega_4(\xi_{\wt G})$. It follows that the $E_{3,1}^\infty$ terms agree for $\wt G$ and $\wt F$ and are isomorphic to $\Z/2$.
	Hence the images of $(4,0)$ and $(0,4)$ in the $E_{3,1}^\infty$ term for $\wt G$ agree and so $(4,-4)$ lies in $F_{2,2}$. As $E_{0,4}^\infty \to F_{2,2} \to E^{\infty}_{2,2} \to 0$ is exact and $E^\infty_{2,2}$ is 2-torsion, it follows that the image of $(8,-8)$ maps trivially to $E^\infty_{2,2}$ and so lies in $E_{0,4}^\infty$. Since the images of $(8,0)$ and $(0,-8)$ generate $E_{2,2}^\infty$, we have that $E_{2,2}^\infty\cong \Z/2$ for $\wt{G}$, as for $\wt F$.
    As mentioned, the lemma follows, noting that, by the same logic as that used in \eqref{eqn:filtration-groups}, the $F_{3,1}$ filtration step for $\Omega_4(\xi_{\wt{F}})$ is isomorphic to $(4\Z\oplus 4\Z)/(4,-4)$.
\end{proof}

\begin{corollary}
	$[K3]=0\in \Omega_4(\xi_{G})$.
\end{corollary}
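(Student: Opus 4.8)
The plan is to repeat the argument of \cref{lem:d4-nonzero} one skeleton higher, using \cref{lem:f31-G} in the role that \cref{lem:wtF-bordism}~\eqref{it:wtF-iv} played there. Fix a CW structure on $G$, write $(\wt G)^{(3)}$ for the induced $3$-skeleton of the universal cover, and recall that $p \colon \wt G \to G$ denotes the covering projection. Since $K3$ is simply connected, its $\xi_G$-structure lifts to $\wt G$, so that $[K3] \in \Omega_4(\xi_G)$ is the image under $p_*$ of a class $[K3] \in \Omega_4(\xi_{\wt G})$. This class lies in the bottom filtration step $E^\infty_{0,4} = F_{0,4} \subseteq F_{3,1}$, and $F_{3,1}$ is exactly the image of $\Omega_4(\xi_{(\wt G)^{(3)}})$. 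By \cref{lem:f31-G} this image is $(4\Z \oplus 4\Z)/(4,-4)$, and $\wt y_*$ carries $[K3]$ to $[K3] \in \Omega_4(\xi_{\wt F})$, which is $(16,0)$ by the proof of \cref{lem:d4-nonzero}. Hence $[K3]$ corresponds to $(16,0) \in (4\Z \oplus 4\Z)/(4,-4)$.

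Next I would transport the deck action across the isomorphism $\wt y_*$. As in \cref{lem:d4-nonzero}, deck transformations commute with $p$, so the restriction of $p_*$ to the image of $\Omega_4(\xi_{(\wt G)^{(3)}})$ factors through the coinvariants $\Z \otimes_{\Z[\Z/2]} \big((4\Z \oplus 4\Z)/(4,-4)\big)$ for the $\pi_1(G) \cong \Z/2$ deck action. Because $\wt y$ is $\Z/2$-equivariant and $y^*v_1$ is nontrivial on $\pi_1(G)$, this action agrees with the one on $\Omega_4(\xi_{\wt F})$ computed in \cref{lem:d4-nonzero}: it interchanges the two summands and reverses orientation, hence sends $(z,z') \mapsto (-z',-z)$.

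It then remains to compute in these coinvariants. Subtracting the trivial element $(8,-8) = 2 \cdot (4,-4)$ gives $(16,0) = (8,8) = (8,0) + (0,8)$; since $(0,8) = T(-8,0)$ is identified with $(-8,0)$ in the coinvariants, we obtain $(8,0) + (-8,0) = (0,0)$. Therefore $p_*[K3] = 0$, i.e.\ $[K3] = 0 \in \Omega_4(\xi_G)$.

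The genuine content is already contained in \cref{lem:homology-G,lem:f31-G}, which identify the relevant filtration step of $\Omega_4(\xi_{\wt G})$ with $(4\Z \oplus 4\Z)/(4,-4)$; the only obstacle at this stage is the bookkeeping, namely verifying that $[K3]$ really lands in the $3$-skeletal image and that the deck action is correctly transported along $\wt y_*$. Once that is in place, the statement reduces to the short coinvariants computation above, exactly parallel to the $d_4$ case in \cref{lem:d4-nonzero}.
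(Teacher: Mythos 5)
Your proposal is correct and follows essentially the same route as the paper's own proof: both factor $\Omega_4(\xi_{(\wt G)^{(3)}})\to\Omega_4(\xi_{\wt G})\to\Omega_4(\xi_G)$ through the $\Z[\Z/2]$-coinvariants, identify the relevant filtration step as $(4\Z\oplus 4\Z)/(4,-4)$ via \cref{lem:f31-G} with the deck action $(z,z')\mapsto(-z',-z)$, and kill $(16,0)$ by the computation $(16,0)=(8,8)=(8,0)+(-8,0)=(0,0)$. The only difference is that you spell out bookkeeping the paper leaves implicit (that $[K3]$ lies in $F_{0,4}\subseteq F_{3,1}$, the image of the $3$-skeletal bordism group, and that the deck action transports along the equivariant map $\wt y$), which is a harmless elaboration rather than a different argument.
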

\begin{proof}
    As in the proof of \cref{lem:d4-nonzero}, the map \[\Omega_4(\xi_{(\wt G)^{(3)}})\to\Omega_4(\xi_{\wt G}) \to \Omega_4(\xi_{G})\] factors through $\Z\otimes_{\Z[\Z/2]}\Omega_4(\xi_{\wt G})$, where $\Z/2$ acts by the deck transformation.
    By \cref{lem:f31-G}, the image of $\Omega_4(\xi_{(\wt G)^{(3)}})$ is isomorphic to $(4\Z\oplus 4\Z)/(4,-4)$, and the deck transformation acts by $(z,z')\mapsto (-z',-z)$, as in the proof of \cref{lem:d4-nonzero}.
     We compute in the quotient that
    \[(16,0)=(8,8)=(8,0)+(0,8)=(8,0)+(-8,0)=(0,0).\]
    Since $[K3]$ represents $(16,0)$, it follows that $[K3]=0\in \Omega_4(\xi_{G})$ as claimed.
\end{proof}

As the $d_3$ and $d_4$ differentials in the James spectral sequence $H_p(G;\Omega_q^{\spin})\Rightarrow \Omega_{p+q}(\xi_{G})$ are trivial by \cref{thm:main,thm:B}, it follows that the $d_5$ differential must be nontrivial.
So to completely decide whether stably exotic 4-manifolds exist for $(\pi,w_1,w_2)$ such that $w_1^3 + w_1w_2=0$ and $f^*\mathfrak{o}=0$, one must finally consider the pullback of this universal (nontrivial) $d_5$ differential in the James spectral sequence for $\Omega_4(\xi_{\pi})$. We do not know a pleasant general way to analyse this akin to the analysis for the $d_3$ and $d_4$ differentials.

\def\MR#1{}
\bibliography{bib}
\end{document}